\newtheorem{cor}[subsection]{Corollary}
\newtheorem{lem}[subsection]{Lemma}
\newtheorem{prop}[subsection]{Proposition}
\newtheorem{conj}[subsection]{Conjecture}
\newtheorem{thm}[subsection]{Theorem}
\newtheorem{defn}[subsection]{Definition}
\newtheorem{rem}[subsection]{Remark}
\theoremstyle{definition}
\theoremstyle{remark}
\newcommand{\nc}{\newcommand}
\nc{\renc}{\renewcommand} \nc{\ssec}{\subsection}
\nc{\sssec}{\subsubsection} \nc{\on}{\operatorname}
\nc\ol{\overline} \nc\ul{\underline} \nc\wt{\widetilde}
\nc\tboxtimes{\wt{\boxtimes}} \nc{\alp}{\alpha}
\nc{\ZZ}{{\mathbb Z}} \nc{\NN}{{\mathbb N}} \nc{\CC}{{\mathbb C}}
\nc{\OO}{{\mathbb O}} \renc{\SS}{{\mathbb S}} \nc{\DD}{{\mathbb
D}}
\nc{\Fq}{{\mathbb F}_q} \nc{\Fqb}{\ol{{\mathbb F}_q}}
\nc{\Ql}{\ol{{\mathbb Q}_\ell}} \nc{\id}{\text{id}} \nc\X{\mathcal
X}
\nc{\Hom}{\on{Hom}} \nc{\Lie}{\on{Lie}} \nc{\Loc}{\on{Loc}}
\nc{\Pic}{\on{Pic}} \nc{\Bun}{\on{Bun}} \nc{\IC}{\on{IC}}
\nc{\Aut}{\on{Aut}} \nc{\rk}{\on{rk}} \nc{\Sh}{\on{Sh}}
\nc{\Perv}{\on{Perv}} \nc{\pos}{{\on{pos}}} \nc{\Conv}{\on{Conv}}
\nc{\Sph}{\on{Sph}} \nc{\Sym}{\on{Sym}}
\nc{\BunBb}{\overline{\Bun}_B} \nc{\Buno}{\overset{o}{\Bun}}
\nc{\BunPb}{{\overline{\Bun}_P}}
\nc{\BunBM}{\overline{\Bun}_{B(M)}}
\nc{\BunPbw}{{\widetilde{\Bun}_P}}
\nc{\BunBP}{\widetilde{\Bun}_{B,P}} \nc{\GUb}{\overline{G/U}}
\nc{\GUPb}{\overline{G/U(P)}}
\nc{\iso}{{\stackrel{\sim}{\longrightarrow}}}
\nc{\Hhom}{\underline{\on{Hom}}} \nc\syminfty{\on{Sym}^{\infty}}
\nc\lal{\ol{\lambda}} \nc\xl{\ol{x}} \nc\thl{\ol{\theta}}
\nc\nul{\ol{\nu}} \nc\mul{\ol{\mu}} \nc\Sum\Sigma
\nc{\oX}{\overset{o}{X}{}}
\nc{\M}{{\mathcal M}} \nc{\N}{{\mathcal N}} \nc{\F}{{\mathcal F}}
\nc{\D}{{\mathcal D}} \nc{\Q}{{\mathcal Q}} \nc{\Y}{{\mathcal Y}}
\nc{\G}{{\mathcal G}} \nc{\E}{{\mathcal E}} \nc{\CalC}{{\mathcal
C}}
\nc\Dh{\widehat{\D}}
\nc{\C}{{\mathcal C}} \nc{\K}{{\mathcal K}}
\renewcommand{\H}{{\mathcal H}}
\nc{\T}{{\mathcal T}} \nc{\V}{{\mathcal V}} \renc{\P}{{\mathcal
P}} \nc{\A}{{\mathcal A}} \nc{\B}{{\mathcal B}} \nc{\U}{{\mathcal
U}}
\nc{\Gr}{\on{Gr}}
\nc{\frn}{{\check{\mathfrak u}(P)}}
\nc\f{{\mathfrak f}}
\nc{\q}{{\mathfrak q}} \nc{\p}{{\mathfrak p}} \nc{\s}{{\mathfrak
s}} \nc\w{\text{w}}
\nc\Spec{\on{Spec}} \nc\Mod{\on{Mod}}
\nc{\tw}{\widetilde{\mathfrak t}} \nc{\pw}{\widetilde{\mathfrak
p}} \nc{\qw}{\widetilde{\mathfrak q}} \nc{\jw}{\widetilde j}
\nc{\grb}{\overline{\Gr}} \nc{\I}{\mathcal I}
\nc{\lambdach}{{\check\lambda}} \nc{\Lambdach}{{\check\Lambda}{}}
\nc{\much}{{\check\mu}} \nc{\omegach}{{\check\omega}}
\nc{\nuch}{{\check\nu}} \nc{\etach}{{\check\eta}}
\nc{\alphach}{{\check\alpha}} \nc{\betach}{{\check\beta}}
\nc{\rhoch}{{\check\rho}} \nc{\ch}{{\check h}}
\nc{\Hb}{\overline{\H}}
\nc{\BA}{{\mathbb{A}}} \nc{\BC}{{\mathbb{C}}}
\nc{\BM}{{\mathbb{M}}} \nc{\BN}{{\mathbb{N}}}
\nc{\BP}{{\mathbb{P}}} \nc{\BR}{{\mathbb{R}}}
\nc{\BZ}{{\mathbb{Z}}} \nc{\BS}{{\mathbb{S}}}
\nc{\CA}{{\mathcal{A}}} \nc{\CB}{{\mathcal{B}}}
\nc{\CE}{{\mathcal{E}}} \nc{\CF}{{\mathcal{F}}}
\nc{\CG}{{\mathcal{G}}} \nc{\CH}{{\mathcal{H}}}
\nc{\CI}{{\mathcal{I}}} \nc{\CL}{{\mathcal{L}}}
\nc{\CM}{{\mathcal{M}}} \nc{\CN}{{\mathcal{N}}}
\nc{\CO}{{\mathcal{O}}} \nc{\CP}{{\mathcal{P}}}
\nc{\CQ}{{\mathcal{Q}}} \nc{\CR}{{\mathcal{R}}}
\nc{\CS}{{\mathcal{S}}} \nc{\CT}{{\mathcal{T}}}
\nc{\CU}{{\mathcal{U}}} \nc{\CV}{{\mathcal{V}}}
\nc{\CW}{{\mathcal{W}}} \nc{\CZ}{{\mathcal{Z}}}
\nc{\cM}{{\check{\mathcal M}}{}} \nc{\csM}{{\check{\mathcal A}}{}}
\nc{\oM}{{\overset{\circ}{\mathcal M}}{}}
\nc{\obM}{{\overset{\circ}{\mathbf M}}{}}
\nc{\oCA}{{\overset{\circ}{\mathcal A}}{}}
\nc{\obA}{{\overset{\circ}{\mathbf A}}{}}
\nc{\ooM}{{\overset{\circ}{M}}{}}
\nc{\osM}{{\overset{\circ}{\mathsf M}}{}}
\nc{\vM}{{\overset{\bullet}{\mathcal M}}{}}
\nc{\nM}{{\underset{\bullet}{\mathcal M}}{}}
\nc{\oD}{{\overset{\circ}{\mathcal D}}{}}
\nc{\obD}{{\overset{\circ}{\mathbf D}}{}}
\nc{\oA}{{\overset{\circ}{\mathbb A}}{}}
\nc{\op}{{\overset{\bullet}{\mathbf p}}{}}
\nc{\cp}{{\overset{\circ}{\mathbf p}}{}}
\nc{\oU}{{\overset{\bullet}{\mathcal U}}{}}
\nc{\oZ}{{\overset{\circ}{\mathcal Z}}{}}
\nc{\ofZ}{{\overset{\circ}{\mathfrak Z}}{}}
\nc{\ff}{{\mathfrak{f}}} \nc{\fv}{{\mathfrak{v}}}
\nc{\fa}{{\mathfrak{a}}} \nc{\fb}{{\mathfrak{b}}}
\nc{\fd}{{\mathfrak{d}}} \nc{\fe}{{\mathfrak{e}}}
\nc{\fg}{{\mathfrak{g}}} \nc{\fgl}{{\mathfrak{gl}}}
\nc{\fh}{{\mathfrak{h}}} \nc{\fri}{{\mathfrak{i}}}
\nc{\fj}{{\mathfrak{j}}} \nc{\fk}{{\mathfrak{k}}}
\nc{\fm}{{\mathfrak{m}}} \nc{\fn}{{\mathfrak{n}}}
\nc{\ft}{{\mathfrak{t}}} \nc{\fu}{{\mathfrak{u}}}
\nc{\fw}{{\mathfrak{w}}} \nc{\fz}{{\mathfrak{z}}}
\nc{\fp}{{\mathfrak{p}}} \nc{\frr}{{\mathfrak{r}}}
\nc{\fs}{{\mathfrak{s}}} \nc{\fsl}{{\mathfrak{sl}}}
\nc{\hsl}{{\widehat{\mathfrak{sl}}}}
\nc{\hgl}{{\widehat{\mathfrak{gl}}}}
\nc{\hg}{{\widehat{\mathfrak{g}}}}
\nc{\chg}{{\widehat{\mathfrak{g}}}{}^\vee}
\nc{\hn}{{\widehat{\mathfrak{n}}}}
\nc{\chn}{{\widehat{\mathfrak{n}}}{}^\vee}
\nc{\fA}{{\mathfrak{A}}} \nc{\fB}{{\mathfrak{B}}}
\nc{\fD}{{\mathfrak{D}}} \nc{\fE}{{\mathfrak{E}}}
\nc{\fF}{{\mathfrak{F}}} \nc{\fG}{{\mathfrak{G}}} \nc{\fH}{{\mathfrak{H}}}
\nc{\fI}{{\mathfrak{I}}} \nc{\fJ}{{\mathfrak{J}}}
\nc{\fK}{{\mathfrak{K}}} \nc{\fL}{{\mathfrak{L}}}
\nc{\fM}{{\mathfrak{M}}} \nc{\fN}{{\mathfrak{N}}}
\nc{\frP}{{\mathfrak{P}}} \nc{\fQ}{{\mathfrak{Q}}}
\nc{\fT}{{\mathfrak{T}}} \nc{\fU}{{\mathfrak{U}}}
\nc{\fV}{{\mathfrak{V}}} \nc{\fW}{{\mathfrak{W}}}
\nc{\fX}{{\mathfrak{X}}} \nc{\fY}{{\mathfrak{Y}}}
\nc{\fZ}{{\mathfrak{Z}}}
\nc{\ba}{{\mathbf{a}}} \nc{\bb}{{\mathbf{b}}} \nc{\bc}{{\mathbf{c}}}
\nc{\be}{{\mathbf{e}}}
\nc{\bff}{{\mathbf{f}}}\nc{\bk}{{\mathbf{k}}}\nc{\bj}{{\mathbf{j}}}
\nc{\bn}{{\mathbf{n}}} \nc{\bp}{{\mathbf{p}}} \nc{\bq}{{\mathbf{q}}}
\nc{\bs}{{\mathbf{s}}} \nc{\bfu}{{\mathbf{u}}}
\nc{\bv}{{\mathbf{v}}} \nc{\bx}{{\mathbf{x}}} \nc{\by}{{\mathbf{y}}}
\nc{\bw}{{\mathbf{w}}} \nc{\bA}{{\mathbf{A}}} \nc{\bB}{{\mathbf{B}}}
\nc{\bC}{{\mathbf{C}}} \nc{\bD}{{\mathbf{D}}} \nc{\bF}{{\mathbf{F}}}
\nc{\bH}{{\mathbf{H}}} \nc{\bK}{{\mathbf{K}}} \nc{\bM}{{\mathbf{M}}}
\nc{\bN}{{\mathbf{N}}} \nc{\bO}{{\mathbf{O}}} \nc{\bS}{{\mathbf{S}}}
\nc{\bV}{{\mathbf{V}}} \nc{\bW}{{\mathbf{W}}} \nc{\bX}{{\mathbf{X}}}
\nc{\bY}{{\mathbf{Y}}} \nc{\bP}{{\mathbf{P}}} \nc{\bZ}{{\mathbf{Z}}}
\nc{\bh}{{\mathbf{h}}}
\nc{\sA}{{\mathsf{A}}} \nc{\sB}{{\mathsf{B}}}
\nc{\sC}{{\mathsf{C}}} \nc{\sD}{{\mathsf{D}}}
\nc{\sE}{{\mathsf{E}}} \nc{\sF}{{\mathsf{F}}}
\nc{\sK}{{\mathsf{K}}} \nc{\sL}{{\mathsf{L}}}
\nc{\sM}{{\mathsf{M}}} \nc{\sO}{{\mathsf{O}}}
\nc{\sQ}{{\mathsf{Q}}} \nc{\sP}{{\mathsf{P}}}
\nc{\sT}{{\mathsf{T}}} \nc{\sZ}{{\mathsf{Z}}}
\nc{\sV}{{\mathsf{V}}}
\nc{\sfp}{{\mathsf{p}}} \nc{\sr}{{\mathsf{r}}}
\nc{\st}{{\mathsf{t}}} \nc{\sfb}{{\mathsf{b}}}
\nc{\sfc}{{\mathsf{c}}} \nc{\sd}{{\mathsf{d}}}
\nc{\sz}{{\mathsf{z}}}
\nc{\BK}{{\bar{K}}}
\nc{\tA}{{\widetilde{\mathbf{A}}}}
\nc{\tB}{{\widetilde{\mathcal{B}}}}
\nc{\tg}{{\widetilde{\mathfrak{g}}}} \nc{\tG}{{\widetilde{G}}}
\nc{\TM}{{\widetilde{\mathbb{M}}}{}}
\nc{\tO}{{\widetilde{\mathsf{O}}}{}}
\nc{\tU}{{\widetilde{\mathfrak{U}}}{}} \nc{\TZ}{{\tilde{Z}}}
\nc{\tx}{{\tilde{x}}} \nc{\tbv}{{\tilde{\bv}}}
\nc{\tfP}{{\widetilde{\mathfrak{P}}}{}} \nc{\tz}{{\tilde{\zeta}}}
\nc{\tmu}{{\tilde{\mu}}}
\nc{\urho}{\underline{\rho}} \nc{\uB}{\underline{B}}
\nc{\uC}{{\underline{\mathbb{C}}}} \nc{\ui}{\underline{i}}
\nc{\uj}{\underline{j}} \nc{\ofP}{{\overline{\mathfrak{P}}}}
\nc{\oB}{{\overline{\mathcal{B}}}}
\nc{\og}{{\overline{\mathfrak{g}}}} \nc{\oI}{{\overline{I}}}
\nc{\eps}{\varepsilon} \nc{\hrho}{{\hat{\rho}}}
\nc{\blambda}{{\boldsymbol{\lambda}}}
\nc{\one}{{\mathbf{1}}} \nc{\two}{{\mathbf{t}}}
\nc{\Rep}{{\mathop{\operatorname{\rm Rep}}}}
\nc{\Tot}{{\mathop{\operatorname{\rm Tot}}}}
\nc{\Ker}{{\mathop{\operatorname{\rm Ker}}}}
\nc{\Hilb}{{\mathop{\operatorname{\rm Hilb}}}}
\nc{\End}{{\mathop{\operatorname{\rm End}}}}
\nc{\Ext}{{\mathop{\operatorname{\rm Ext}}}}
\nc{\CHom}{{\mathop{\operatorname{{\mathcal{H}}\it om}}}}
\nc{\GL}{{\mathop{\operatorname{\rm GL}}}}
\nc{\gr}{{\mathop{\operatorname{\rm gr}}}}
\nc{\Id}{{\mathop{\operatorname{\rm Id}}}}
\nc{\defi}{{\mathop{\operatorname{\rm def}}}}
\nc{\length}{{\mathop{\operatorname{\rm length}}}}
\nc{\supp}{{\mathop{\operatorname{\rm supp}}}}
\nc{\Cliff}{{\mathsf{Cliff}}}
\nc{\Fl}{{\mathsf{Fl}}} \nc{\Fib}{{\mathsf{Fib}}}
\nc{\Coh}{{\mathsf{Coh}}} \nc{\FCoh}{{\mathsf{FCoh}}}
\nc{\reg}{{\text{\rm reg}}}
\nc{\cplus}{{\mathbf{C}_+}} \nc{\cminus}{{\mathbf{C}_-}}
\nc{\cthree}{{\mathbf{C}_*}} \nc{\Qbar}{{\bar{Q}}}
\nc{\bOmega}{{\overline{\Omega}}}
\nc{\seq}[1]{\stackrel{#1}{\sim}}
\nc{\aff}{\operatorname{aff}}
\def\dsp{\displaystyle}
\begin{document}

\title[$K$-theory of Laumon spaces]
 {Quantum affine Gelfand-Tsetlin bases and quantum toroidal algebra via
 $K$-theory of affine Laumon spaces}

\author[A. Tsymbaliuk]{Alexander Tsymbaliuk}
 \address{Independent University of Moscow, 11 Bol'shoy Vlas'evskiy per., Moscow 119002, Russia}
 \curraddr{Department of Mathematics, MIT, 77 Mass. Ave., Cambridge, MA  02139, USA}
\email{sasha\_ts@mit.edu}

\begin{abstract}
Laumon moduli spaces are certain smooth closures of the moduli
spaces of maps from the projective line to the flag variety of
$GL_n$. We construct the action of the quantum loop algebra
$U_v(\textbf{L}\fsl_n)$ in the $K$-theory of Laumon spaces by
certain natural correspondences. Also we construct the action of the
quantum toroidal algebra \"{U}$_v(\widehat{\fsl}_n)$ in the
$K$-theory of the affine version of Laumon spaces.
\end{abstract}

\maketitle

\section{Introduction}
This note is a sequel to~\cite{fr, ffnr}. The moduli spaces
$\fQ_{\ul{d}}$ were introduced by G.~Laumon in~\cite{la1}
and~\cite{la2}. They are certain partial compactifications of the
moduli spaces of degree $\ul{d}$ based maps from $\BP^1$ to the flag
variety $\CB_n$ of $GL_n$.
 The authors of~\cite{fr,ffnr} considered the localized equivariant
cohomology
$R=\bigoplus_{\ul{d}}H^\bullet_{\widetilde{T}\times\BC^*}(\fQ_{\ul{d}})
\otimes_{H^\bullet_{\widetilde{T}\times\BC^*}(pt)}\on{Frac}
(H^\bullet_{\widetilde{T}\times\BC^*}(pt))$ where $\widetilde T$ is
a Cartan torus of $GL_n$ acting naturally on the target $\CB_n$, and
$\BC^*$ acts as ``loop rotations'' on the source $\BP^1$.
 They constructed the action of the Yangian $Y(\fsl_n)$ on $R$, the new Drinfeld
generators acting by natural correspondences.

 In this note we write (in style of~\cite{ffnr}) the formulas for the action of ''Drinfeld generators''
of the quantum loop algebra in the localized equivariant $K$-theory
$M=\bigoplus_{\ul{d}}K^{\widetilde{T}\times\BC^*}(\fQ_{\ul{d}})
\otimes_{K^{\widetilde{T}\times\BC^*}(pt)}\on{Frac}
(K^{\widetilde{T}\times\BC^*}(pt))$. In fact, the correspondences
defining this action are very similar to the correspondences used by
H.~Nakajima~\cite{nak-quiver} to construct the action of the quantum
loop algebra in the equivariant $K$-theory of quiver varieties.

 We prove the main theorem directly by checking all relations in the
fixed point basis.

There is an affine version of the Laumon spaces, namely the moduli
spaces $\CP_{\ul{d}}$ of parabolic sheaves on $\BP^1\times\BP^1$, a
certain partial compactification of the moduli spaces of degree $d$
based maps from $\BP^1$ to the "thick" flag variety of the loop
group $\widehat {SL_n}$, see~\cite{fgk}. The similar correspondences
give rise to an action of the quantum toroidal algebra
\"{U}$_v(\widehat{\fsl}_n)$ on the sum of localized equivariant
$K$-groups
$V=\bigoplus_{\ul{d}}K^{\widetilde{T}\times\BC^*\times\BC^*}
(\CP_{\ul{d}})
\otimes_{K^{\widetilde{T}\times\BC^*\times\BC^*}(pt)}\on{Frac}
(K^{\widetilde{T}\times\BC^*\times\BC^*}(pt))$ where the second copy
of $\BC^*$ acts by the loop rotation on the second copy of $\BP^1$
(Theorem~\ref{varaf}).

Since the fixed point basis of $M$ corresponds to the
Gelfand-Tsetlin basis of the universal Verma module over
$U_v(\fgl_n)$ (Theorem 6.3 in~\cite{fr}), we propose to call the
fixed point basis of $V$ the {\em affine Gelfand-Tsetlin basis}. We
expect that the specialization of the affine Gelfand-Tsetlin basis
gives rise to a basis in the integrable
$U_v(\widehat{\fgl}_n)$-modules (which we also propose to call the
affine Gelfand-Tsetlin basis). We expect (see 4.17) that the action
of \"{U}$_v(\widehat{\fsl}_n)$ on the integrable
$U_v(\widehat{\fgl}_n)$-modules coincides with the action of Uglov
and Takemura~\cite{u1}. It seems likely that these
\"{U}$_v(\widehat{\fsl}_n)$--modules are obtained by the application
of the {\em Schur} functor (\cite{gkv}) to the irreducible
$\mathfrak X$-semisimple modules over the double affine Cherednik
algebra \"{H}$_n(v)$ of type $A_{n-1}$, see~\cite{sv}.

\subsection{Acknowledgments}
 I am highly indebted to Boris Feigin and Michael Finkelberg for teaching me remarkable mathematics, for introducing to this
topic and for frequent stimulating discussions.
 I am grateful to Alexander Molev
for some useful remarks concerning $q$-Yangians.


\section{Laumon spaces and quantum loop algebra $U_q(\textbf{L}\fsl_n)$ }

\subsection{Laumon spaces}
We recall the setup of ~\cite{bf,fr,ffnr}. Let $\bC$ be a smooth
projective curve of genus zero. We fix a coordinate $z$ on $\bC$,
and consider the action of $\BC^*$ on $\bC$ such that
$v(z)=v^{-2}z$. We have $\bC^{\BC^*}=\{0,\infty\}$.

We consider an $n$-dimensional vector space $W$ with a basis
$w_1,\ldots,w_n$. This defines a Cartan torus $T\subset
G=GL_n\subset Aut(W)$. We also consider its $2^n$-fold cover,
the bigger torus $\widetilde{T}$, acting on $W$ as follows: for
$\widetilde{T}\ni\ul{t}=(t_1,\ldots,t_n)$ we have
$\ul{t}(w_i)=t_i^2w_i$. We denote by $\CB$ the flag variety of
$G$.

Given an $(n-1)$-tuple of nonnegative integers
$\ul{d}=(d_1,\ldots,d_{n-1})$, we consider the Laumon's
quasiflags' space $\CQ_{\ul{d}}$, see ~\cite{la2}, ~4.2. It is the
moduli space of flags of locally free subsheaves
$$0\subset\CW_1\subset\cdots\subset\CW_{n-1}\subset\CW=W\otimes\CO_\bC$$
such that $\on{rank}(\CW_k)=k$, and $\deg(\CW_k)=-d_k$. It is known
to be a smooth projective variety of dimension
$2d_1+\cdots+2d_{n-1}+\dim\CB$, see ~\cite{la1}, ~2.10.

 We consider the following locally closed subvariety
$\fQ_{\ul{d}}\subset\CQ_{\ul{d}}$ (quasiflags based at
$\infty\in\bC$) formed by the flags
$$0\subset\CW_1\subset\cdots\subset\CW_{n-1}\subset\CW=W\otimes\CO_\bC$$
such that $\CW_i\subset\CW$ is a vector subbundle in a neighbourhood
of $\infty\in\bC$, and the fiber of $\CW_i$ at $\infty$ equals the
span $\langle w_1,\ldots,w_i\rangle\subset W$. It is known to be a
smooth quasiprojective variety of dimension $2d_1+\cdots+2d_{n-1}$.

\subsection{Fixed points}
\label{fixed points} The group $G\times\BC^*$ acts naturally on
$\CQ_{\ul{d}}$, and the group $\widetilde{T}\times\BC^*$ acts
naturally on $\fQ_{\ul{d}}$. The set of fixed points of
$\widetilde{T}\times\BC^*$ on $\fQ_{\ul{d}}$ is finite; we recall
its description from ~\cite{fk}, ~2.11.

Let $\widetilde{\ul{d}}$ be a collection of nonnegative integers
$(d_{ij}),\ i\geq j$, such that $d_i=\sum_{j=1}^id_{ij}$, and for
$i\geq k\geq j$ we have $d_{kj}\geq d_{ij}$. Abusing notation we
denote by $\widetilde{\ul{d}}$ the corresponding
$\widetilde{T}\times\BC^*$-fixed point in $\fQ_{\ul{d}}$:

$\CW_1=\CO_\bC(-d_{11}\cdot0)w_1,$

$\CW_2=\CO_\bC(-d_{21}\cdot0)w_1\oplus\CO_\bC(-d_{22}\cdot0)w_2,$

$\vdots$

$\CW_{n-1}=\CO_\bC(-d_{n-1,1}\cdot0)w_1\oplus\CO_\bC(-d_{n-1,2}\cdot0)w_2
\oplus\cdots\oplus\CO_\bC(-d_{n-1,n-1}\cdot0)w_{n-1}.$

\medskip

 \emph{Notation:} Given a collection $\widetilde{\ul{d}}$ as above, we will
denote by $\widetilde{\ul{d}}+\delta_{i,j}$ the collection
$\widetilde{\ul{d}}{}'$, such that
$\widetilde{\ul{d}}{}'_{i,j}=\widetilde{\ul{d}}_{i,j}+1$, while
$\widetilde{\ul{d}}{}'_{p,q}=\widetilde{\ul{d}}_{p,q}$ for $(p,q)\ne
(i,j)$ (in all our cases it will satisfy the required conditions,
though in general as defined it might not).

\subsection{Correspondences}
\label{classic} For $i\in\{1,\ldots,n-1\}$, and
$\ul{d}=(d_1,\ldots,d_{n-1})$, we set
$\ul{d}+i:=(d_1,\ldots,d_i+1,\ldots,d_{n-1})$. We have a
correspondence $\sE_{\ul{d},i}\subset\CQ_{\ul{d}}\times
\CQ_{\ul{d}+i}$ formed by the pairs $(\CW_\bullet,\CW'_\bullet)$
such that for $j\ne i$ we have $\CW_j=\CW'_j$, and
$\CW'_i\subset\CW_i$, see ~\cite{fk}, ~3.1. In other words,
$\sE_{\ul{d},i}$ is the moduli space of flags of locally free
sheaves
$$0\subset\CW_1\subset\cdots\subset\CW_{i-1}\subset\CW'_i\subset\CW_i\subset
\CW_{i+1}\subset\cdots\subset\CW_{n-1}\subset\CW$$ such that
$\on{rank}(\CW_k)=k$ and $\deg(\CW_k)=-d_k$, while
$\on{rank}(\CW'_i)=i$ and $\deg(\CW'_i)=-d_i-1$.

According to ~\cite{la1}, ~2.10, $\sE_{\ul{d},i}$ is a smooth
projective algebraic variety of dimension
$2d_1+\cdots+2d_{n-1}+\dim\CB+1$.

We denote by $\bp$ (resp. $\bq$) the natural projection
$\sE_{\ul{d},i}\to\CQ_{\ul{d}}$ (resp.
$\sE_{\ul{d},i}\to\CQ_{\ul{d}+i}$). We also have a map $\bs:\
\sE_{\ul{d},i}\to\bC,$
$$(0\subset\CW_1\subset\cdots\subset\CW_{i-1}\subset\CW'_i\subset\CW_i\subset
\CW_{i+1}\subset\cdots\subset\CW_{n-1}\subset\CW)\mapsto\on{supp}(\CW_i/\CW'_i).$$

The correspondence $\sE_{\ul{d},i}$ comes equipped with a natural
line bundle $L_i$ whose fiber at a point
$$(0\subset\CW_1\subset\cdots\subset\CW_{i-1}\subset\CW'_i\subset\CW_i\subset
\CW_{i+1}\subset\cdots\subset\CW_{n-1}\subset\CW)$$ equals
$\Gamma(\bC,\CW_i/\CW'_i)$. Finally, we have a transposed
correspondence $^\sT\sE_{\ul{d},i}\subset
\CQ_{\ul{d}+i}\times\CQ_{\ul{d}}$.

Restricting to $\fQ_{\ul{d}}\subset\CQ_{\ul{d}}$ we obtain the
correspondence
$\sE_{\ul{d},i}\subset\fQ_{\ul{d}}\times\fQ_{\ul{d}+i}$ together
with the line bundle $L_i$ and the natural maps $\bp:\
\sE_{\ul{d},i}\to\fQ_{\ul{d}},\ \bq:\
\sE_{\ul{d},i}\to\fQ_{\ul{d}+i},\ \bs:\
\sE_{\ul{d},i}\to\bC\backslash\{\infty\}$. We also have a transposed
correspondence $^\sT\sE_{\ul{d},i}\subset
\fQ_{\ul{d}+i}\times\fQ_{\ul{d}}$. It is a smooth quasiprojective
variety of dimension $2d_1+\ldots+2d_{n-1}+1$.

\subsection{Equivariant $K$-groups}
We denote by ${}'M$ the direct sum of equivariant (complexified)
$K$-groups:
$${}'M=\oplus_{\ul{d}}K^{\widetilde{T}\times\BC^*}(\fQ_{\ul{d}}).$$ It
is a module over
$K^{\widetilde{T}\times\BC^*}(pt)=\BC[T\times\BC^{*}]=
\BC[x_1,\ldots,x_n,v]$. We define $$M=\
{}'M\otimes_{K^{\widetilde{T}\times\BC^*}(pt)}
\on{Frac}(K^{\widetilde{T}\times\BC^*}(pt)).$$

We have an evident grading $$M=\oplus_{\ul{d}}M_{\ul{d}},\
M_{\ul{d}}=K^{\widetilde{T}\times\BC^*}(\fQ_{\ul{d}})
\otimes_{K^{\widetilde{T}\times\BC^*}(pt)}
\on{Frac}(K^{\widetilde{T}\times\BC^*}(pt)).$$

\subsection{Quantum universal enveloping algebra $U_v(\fgl_n)$}
For the quantum universal enveloping algebra $U_v(\fgl_n)$ we follow
the notations of section~2 of~\cite{mtz}. Namely, $U_v(\fgl_n)$ has
generators $\ft_1^{\pm 1},\ldots, \ft_n^{\pm
1},\fe_1,\ldots,\fe_{n-1},\ff_1,\ldots,\ff_{n-1}$ with the following
defining relations (formulas (2.1) of {\em loc. cit.}):

\begin{equation}
\label{quantgl1} \ft_i\ft_j=\ft_j\ft_i,\
\ft_i\ft_i^{-1}=\ft_i^{-1}\ft_i=1
\end{equation}

\begin{equation}
\label{quantgl2}
\ft_i\fe_j\ft_i^{-1}=\fe_jv^{\delta_{i,j}-\delta_{i,j+1}},\
\ft_i\ff_j\ft_i^{-1}=\ff_jv^{-\delta_{i,j}+\delta_{i,j+1}}
\end{equation}

\begin{equation}
\label{quantgl3}
[\fe_i,\ff_j]=\delta_{i,j}\frac{\fk_i-\fk_i^{-1}}{v-v^{-1}},\
\fk_i=\ft_i\ft_{i+1}^{-1}
\end{equation}

\begin{equation}
\label{quantgl4} [e_i,e_j]=[f_i,f_j]=0\ (|i-j|>1)
\end{equation}

\begin{equation}
\label{quantgl5} [\fe_i,[\fe_i,\fe_{i\pm
1}]_v]_v=[\ff_i,[\ff_i,\ff_{i\pm 1}]_v]_v=0,\ [a,b]_v:=ab-vba
\end{equation}

 The subalgebra
generated by $\{\fk_i,\fk_i^{-1},\fe_i,\ff_i\}_{1\leq i\leq n-1}$ is
isomorphic to $U_v(\fsl_n)$. We denote by $U_v(\fgl_n)_{\leq0}$ the
subalgebra of $U_v(\fgl_n)$ generated by $\ft_i,\ft_i^{-1},\ff_i$.
It acts on the field $\BC(\widetilde{T}\times\BC^*)$ as follows:
$\ff_i$ acts trivially for any $1\leq i\leq n-1$, and $\ft_i$ acts
by multiplication by $t_iv^{i-1}$. We define the {\em universal
Verma module} $\fM$ over $U_v(\fgl_n)$ as
$\fM:=U_v(\fgl_n)\otimes_{U_v(\fgl_n)_{\leq0}}\BC(\widetilde{T}\times\BC^*)$.

We define the following operators on $M$:
\begin{equation}
\label{quantgl1} \ft_i=t_iv^{d_{i-1}-d_i+i-1}:\ M_{\ul{d}}\to
M_{\ul{d}}
\end{equation}
\begin{equation}
\label{quantgl2} \fe_i=t_{i+1}^{-1}v^{d_{i+1}-d_i-i+1}\bp_*\bq^*:\
M_{\ul{d}}\to M_{\ul{d}-i}
\end{equation}
\begin{equation}
\label{quantgl3}
\ff_i=-t_i^{-1}v^{d_i-d_{i-1}+i}\bq_*(L_i\otimes\bp^*):\
M_{\ul{d}}\to M_{\ul{d}+i}
\end{equation}

The following result is Theorem~2.12 of~\cite{bf}.
\begin{thm}
\label{brav} These operators satisfy the relations in $U_v(\fgl_n)$,
i.e. they give rise to the action of $U_v(\fgl_n)$ on $M$. Moreover,
there is a unique isomorphism $\Psi:\ M\to\fM$ carrying
$[\CO_{\fQ_0}]\in M$ to the lowest weight vector
$1\in\BC(\widetilde{T}\times\BC^*)\subset\fM$.
\end{thm}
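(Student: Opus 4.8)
The plan is to prove both statements by passing to the fixed-point basis. Since $\fQ_{\ul d}$ is smooth and its $\widetilde T\times\BC^*$-fixed locus is the finite set of the collections $\widetilde{\ul d}$ recalled above, the Thomason localization theorem provides a basis $\{[\widetilde{\ul d}]\}$ of $M_{\ul d}$ over the fraction field $\on{Frac}(K^{\widetilde T\times\BC^*}(pt))$, where each $[\widetilde{\ul d}]$ may be normalized by the $K$-theoretic Euler class $\lambda_{-1}(T^*_{\widetilde{\ul d}}\fQ_{\ul d})$ of the tangent space. By construction $\ft_i$ acts on $M_{\ul d}$ as the scalar $t_iv^{d_{i-1}-d_i+i-1}$, hence diagonally, so $\ft_i\ft_j=\ft_j\ft_i$ and $\ft_i\ft_i^{-1}=1$ are immediate, and the conjugation relations $\ft_i\fe_j\ft_i^{-1}=v^{\delta_{i,j}-\delta_{i,j+1}}\fe_j$, $\ft_i\ff_j\ft_i^{-1}=v^{-\delta_{i,j}+\delta_{i,j+1}}\ff_j$ follow once one records that $\fe_j$ (resp.\ $\ff_j$), built from the correspondence $\sE_{\cdot,j}$ on the step $\CW_j$, shifts the $\ul d$-grading by $-j$ (resp.\ $+j$).

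The core of the proof is the explicit matrix of $\fe_i$ and $\ff_i$ in the basis $\{[\widetilde{\ul d}]\}$. First I would determine the $\widetilde T\times\BC^*$-fixed points of $\sE_{\ul d,i}$: over a fixed point $\widetilde{\ul d}$ they are indexed by the columns $j\le i$ for which $\widetilde{\ul d}+\delta_{i,j}$ is again admissible — geometrically, one replaces $\CW_i=\bigoplus_j\CO_\bC(-d_{ij}\cdot 0)w_j$ by the subsheaf obtained by lowering the $j$-th summand by one — and on the corresponding component the line bundle $L_i$, with fiber $\Gamma(\bC,\CW_i/\CW'_i)$, is a one-dimensional $\widetilde T\times\BC^*$-module of an explicit weight. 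The $K$-theoretic localization (Atiyah--Bott--Lefschetz) formula then expresses the matrix coefficients of $\bp_*\bq^*$ and of $\bq_*(L_i\otimes\bp^*)$ as ratios of the $\lambda_{-1}$-classes of the (co)tangent spaces to $\fQ_{\ul d}$, $\fQ_{\ul d\pm i}$ and $\sE_{\ul d,i}$ at the fixed points, times the weight of $L_i$; together with the normalizing prefactors $t_{i+1}^{-1}v^{d_{i+1}-d_i-i+1}$ and $-t_i^{-1}v^{d_i-d_{i-1}+i}$ this yields clean rational functions in the equivariant parameters, of the product shape of the classical Gelfand--Tsetlin matrix coefficients for $U_v(\fgl_n)$.

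With these formulas in hand the remaining relations are checked on each $[\widetilde{\ul d}]$. The relations $[\fe_i,\fe_j]=[\ff_i,\ff_j]=0$ and $[\fe_i,\ff_j]=0$ for $|i-j|>1$ hold term by term, since the correspondences $\sE_{\cdot,i}$ and $\sE_{\cdot,j}$ then modify the disjoint groups of coordinates $(d_{i,\bullet})$ and $(d_{j,\bullet})$. The Serre relations $[\fe_i,[\fe_i,\fe_{i\pm1}]_v]_v=[\ff_i,[\ff_i,\ff_{i\pm1}]_v]_v=0$ and the adjacent cases $|i-j|=1$ of $[\fe_i,\ff_j]=0$ involve only rows $i$ and $i\pm1$, so one reduces to $n=3$ and verifies them by a finite check there. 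The decisive relation is $[\fe_i,\ff_i]=(\fk_i-\fk_i^{-1})/(v-v^{-1})$ with $\fk_i=\ft_i\ft_{i+1}^{-1}$: on $[\widetilde{\ul d}]$ the composition $\fe_i\ff_i$ inserts and then deletes a box in row $i$ and $\ff_i\fe_i$ deletes and then inserts one, each producing a sum over ordered pairs of columns; the genuinely off-diagonal contributions cancel between the two orders, while the diagonal terms collapse — via a partial-fraction identity in the parameters attached to the columns $1,\dots,i$ — to the scalar $(\fk_i-\fk_i^{-1})/(v-v^{-1})$ evaluated at the weight of $\widetilde{\ul d}$. I expect this last identity, together with computing the tangent weights at the fixed points of $\sE_{\ul d,i}$ exactly right, to be the main obstacle; the far-commutation and Serre relations are comparatively mechanical.

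For the isomorphism, $[\CO_{\fQ_0}]$ is the class of the unique fixed point of $\fQ_{\ul 0}=pt$; it has $\ft_i$-weight $t_iv^{i-1}$, which is the weight of $1\in\BC(\widetilde T\times\BC^*)\subset\fM$, and it is annihilated by the appropriate Chevalley generators — concretely $\fe_i[\CO_{\fQ_0}]=0$ for all $i$, because $M_{\ul d}=0$ unless all $d_k\ge 0$ — while the $\ff_i$ act on it with the nonzero matrix coefficients of the previous paragraph, so $[\CO_{\fQ_0}]$ is a cyclic vector. By the universal property of $\fM$ this yields a canonical $U_v(\fgl_n)$-module map between $\fM$ and $M$ matching the distinguished generators, surjective because $[\CO_{\fQ_0}]$ generates $M$; it is an isomorphism by a dimension count: $\dim M_{\ul d}$ equals the number of admissible collections $(d_{ij})$ with $\sum_j d_{ij}=d_i$, which in turn equals $\dim(\fM)_{\ul d}$, a Kostant-partition (equivalently Gelfand--Tsetlin) count. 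Uniqueness of $\Psi$ is automatic from the cyclicity of $[\CO_{\fQ_0}]$.
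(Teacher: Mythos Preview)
The paper does not supply its own proof of this theorem: the statement is quoted as Theorem~2.12 of~\cite{bf}. Your overall strategy --- pass to the fixed-point basis via Thomason localization, compute the matrix coefficients of $\fe_i,\ff_i$ by the Bott--Lefschetz formula on the correspondences $\sE_{\ul d,i}$, and then verify the $U_v(\fgl_n)$-relations entry by entry --- is precisely the one carried out in~\cite{bf}, and it is also the one this paper executes in Section~\ref{techn} for the more general loop-algebra Theorem~\ref{var}. The matrix coefficients you anticipate are exactly those recorded in Theorem~\ref{feigin}(b), and your identification of the commutator $[\fe_i,\ff_i]$ as the delicate step, resolved by a partial-fraction identity, matches Proposition~2.21 of~\cite{bf}.

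Two small points. Your ``reduce to $n=3$'' for the Serre and adjacent-commutator relations is slightly imprecise: the products $\prod_{k\le i}$ and $\prod_{k\le i-1}$ in the matrix coefficients have $i$ (resp.\ $i-1$) factors, so one does not literally land in rank two. What is true (and what Section~\ref{techn} does for the loop-algebra relations~(\ref{4})--(\ref{6})) is that all such factors are common to every term of the relation and pull out as a single prefactor $P$, after which the remaining identity involves only the few variables $s_{i,j_1},s_{i,j_2},s_{i\pm1,j_3},v$ and is rank-independent. Second, in your isomorphism argument you correctly observe $\fe_i[\CO_{\fQ_0}]=0$, but the Verma $\fM$ as defined here is induced from the subalgebra generated by $\ft_i^{\pm1}$ and $\ff_i$ with $\ff_i$ acting trivially, so its cyclic vector $1$ is annihilated by $\ff_i$, not $\fe_i$; the universal property you invoke therefore does not directly produce a map $\fM\to M$ sending $1$ to $[\CO_{\fQ_0}]$. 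This mismatch is the Chevalley-involution issue flagged in the remark following the theorem; once the conventions are aligned (or the involution is built into $\Psi$), your cyclicity-plus-dimension-count argument goes through.
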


\begin{rem}
 These notations coincide with those from~\cite{bf} (see Theorem 2.12 and Conjecture 3.7 of {\em
loc. cit.}) after the Chevalley involution and a slight
renormalization (which makes formulas slightly shorter).
\end{rem}

\subsection{Gelfand-Tsetlin basis of the universal Verma module}
The construction of the Gelfand-Tsetlin basis for the
representations of quantum $\fgl_n$ goes back to M.~Jimbo~\cite{j}.
We will follow the approach of~\cite{mtz}. To a collection
$\widetilde{\ul{d}}=(d_{ij}),\ n-1\geq i\geq j$, we associate a {\em
Gelfand-Tsetlin pattern}
$\Lambda=\Lambda(\widetilde{\ul{d}}):=(\lambda_{ij}),\ n\geq i\geq
j$, as follows: $v^{\lambda_{nj}}:=t_jv^{j-1},\ n\geq j\geq 1;\
v^{\lambda_{ij}}:=t_jv^{j-1-d_{ij}},\ n-1\geq i\geq j\geq1$. Now we
define $\xi_{\widetilde{\ul{d}}}=\xi_\Lambda\in\fM$ by the
formula~(5.12) of~\cite{mtz}. According to Proposition~5.1 of {\em
loc. cit.}, the set $\{\xi_{\widetilde{\ul{d}}}\}$ (over all
collections $\widetilde{\ul{d}}$) forms a basis of $\fM$.

 According to the Thomason localization theorem, restriction to the
$\widetilde{T}\times\BC^*$-fixed point set induces an isomorphism
$$K^{\widetilde{T}\times\BC^*}(\fQ_{\ul{d}})
\otimes_{K^{\widetilde{T}\times\BC^*}(pt)}
\on{Frac}(K^{\widetilde{T}\times\BC^*}(pt))\iso
K^{\widetilde{T}\times\BC^*}(\fQ_{\ul{d}}^{\widetilde{T}\times\BC^*})
\otimes_{K^{\widetilde{T}\times\BC^*}(pt)}
\on{Frac}(K^{\widetilde{T}\times\BC^*}(pt))$$

The structure sheaves $[\widetilde{\ul{d}}]$ of the
$\widetilde{T}\times\BC^*$-fixed points $\widetilde{\ul{d}}$ (see
~\ref{fixed points}) form a basis in
$\bigoplus_{\ul{d}}K^{\widetilde{T}\times\BC^*}
(\fQ_{\ul{d}}^{\widetilde{T}\times\BC^*})
\otimes_{K^{\widetilde{T}\times\BC^*}(pt)}\on{Frac}
(K^{\widetilde{T}\times\BC^*}(pt))$. The embedding of a point
$\widetilde{\ul{d}}$ into $\fQ_{\ul{d}}$ is a proper morphism, so
the direct image in the equivariant $K$-theory is well defined, and
we will denote by $\{[\widetilde{\ul{d}}]\}\in M_{\ul{d}}$ the
direct image of the structure sheaves of the point
$\widetilde{\ul{d}}$. The set $\{[\widetilde{\ul{d}}]\}$ forms a
basis of $M$.

The following result is Theorem~6.3 of~\cite{fr} and Corollary 2.20
of~\cite{bf}.

\begin{thm}
\label{feigin} a) Isomorphism $\Psi:\ M\iso\fM$ of
Theorem~\ref{brav} takes $\{[\widetilde{\ul{d}}]\}$ to
$$(v^2-1)^{-|\ul{d}|}\prod_{
j}{t_j^{\sum_{i\geq
j}{d_{i,j}}}}v^{\sum_{i}{id_i}-\frac{|\ul{d}|}{2}-\frac{\sum_{i,j}{d_{i.j}^2}}{2}}\xi_{\widetilde{\ul{d}}}.$$

b) Matrix coefficients of the operators $\fe_i, \ff_i$ in the fixed
point basis $\{[\widetilde{\ul{d}}]\}$ of $M$ are as follows:
$$\ff_{i[\widetilde{\ul{d}},\widetilde{\ul{d}}{}']}=
-t_i^{-1}v^{d_i-d_{i-1}+i} t_j^2v^{-2d_{i,j}} \times$$
$$(1-v^2)^{-1}\prod_{j\ne k\leq i}(1-t_j^2t_k^{-2}v^{2d_{i,k}-2d_{i,j}})^{-1}
\prod_{k\leq i-1}(1-t_j^2t_k^{-2}v^{2d_{i-1,k}-2d_{i,j}})$$ if
$\widetilde{\ul{d}}{}'=\widetilde{\ul{d}}+\delta_{i,j}$ for certain
$j\leq i$;
$$\fe_{i[\widetilde{\ul{d}},\widetilde{\ul{d}}{}']}=
t_{i+1}^{-1}v^{d_{i+1}-d_i+1-i}\times$$
$$(1-v^2)^{-1}\prod_{j\ne k\leq i}(1-t_k^2t_j^{-2}v^{2d_{i,j}-2d_{i,k}})^{-1}
\prod_{k\leq i+1}(1-t_k^2t_j^{-2}v^{2d_{i,j}-2d_{i+1,k}})$$ if
$\widetilde{\ul{d}}{}'=\widetilde{\ul{d}}-\delta_{i,j}$ for certain
$j\leq i$.

All the other matrix coefficients of $\fe_i,\ff_i$ vanish.
\end{thm}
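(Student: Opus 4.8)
The isomorphism $\Psi\colon M\iso\fM$ already exists by \thmref{brav}, so what remains is to pin down $\Psi([\widetilde{\ul d}])$ (part~(a)) and to read off the matrix of the operators $\fe_i,\ff_i$ in the fixed-point basis (part~(b)). The plan is to establish (b) first, by equivariant localization, and then to deduce (a) by comparison with the algebraic Gelfand--Tsetlin formulas of~\cite{mtz}.

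\emph{Part (b).} By the Thomason localization isomorphism recalled above, the computation takes place entirely on the $\widetilde T\times\BC^*$-fixed loci. The first step is to determine the fixed points of the correspondence $\sE_{\ul d,i}$: over a fixed point $\widetilde{\ul d}\in\fQ_{\ul d}$, producing a $\widetilde T\times\BC^*$-fixed subsheaf $\CW'_i$ with $\CW_{i-1}\subset\CW'_i\subset\CW_i$ and $\deg\CW'_i=-d_i-1$ amounts to raising exactly one of the integers $d_{ij}$ ($j\le i$) by $1$; hence $\sE_{\ul d,i}^{\widetilde T\times\BC^*}$ is finite, in bijection with the pairs $(\widetilde{\ul d},\widetilde{\ul d}+\delta_{i,j})$ for which $\widetilde{\ul d}+\delta_{i,j}$ is admissible. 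This alone forces the vanishing of every matrix coefficient of $\fe_i,\ff_i$ except those between $\widetilde{\ul d}$ and $\widetilde{\ul d}\pm\delta_{i,j}$. The second step is to compute the relevant $\widetilde T\times\BC^*$-characters: the tangent space $T_{\widetilde{\ul d}}\fQ_{\ul d}$, and its analogues for $\fQ_{\ul d\pm i}$ and for $\sE_{\ul d,i}$ at its fixed points, are obtained from the deformation theory of the flag $\CW_\bullet$ and decompose into sums of $\Hom$- and $\Ext$-groups between the explicit equivariant summands $\CO_{\bC}(-d_{ij}\cdot 0)w_j$ on $\bC\cong\BP^1$; one also needs the weight of the fibre of $L_i$, which at $(\widetilde{\ul d},\widetilde{\ul d}+\delta_{i,j})$ is $\Gamma(\bC,\CW_i/\CW'_i)$, a one-dimensional space of weight $t_j^2v^{-2d_{ij}}$ (this accounts for the factor $t_j^2v^{-2d_{i,j}}$ in the formula for $\ff_i$). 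Inserting these characters, together with the normalization prefactors from the definitions of $\fe_i,\ff_i$ on $M$, into the $K$-theoretic Lefschetz fixed-point formula for $\bp_*\bq^*$ and $\bq_*(L_i\otimes\bp^*)$ — using that the self-restriction of a fixed-point class equals $\prod_\chi(1-\chi^{-1})$ over the tangent weights $\chi$ — yields, after a lengthy cancellation of the common factors among the tangent characters of $\fQ_{\ul d}$, $\fQ_{\ul d\pm i}$ and $\sE_{\ul d,i}$, the two displayed expressions. That cancellation is the main technical obstacle; the computation is the $K$-theoretic ($v$-deformed) analogue of the cohomological one in~\cite{fr}.

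\emph{Part (a).} Both $\{[\widetilde{\ul d}]\}$ and $\{\xi_{\widetilde{\ul d}}\}$ are joint eigenbases for the Gelfand--Tsetlin subalgebra $\Gamma\subset U_v(\fgl_n)$ generated by the centres of the chain $U_v(\fgl_1)\subset\cdots\subset U_v(\fgl_n)$: for the $\xi$'s this is their defining property in~\cite{mtz}, and for the fixed-point classes it follows as in the proofs of Theorem~6.3 of~\cite{fr} and Corollary~2.20 of~\cite{bf}, using that $\fe_i,\ff_i$ with $i<k$ only modify $\CW_1,\dots,\CW_{k-1}$, so that the fixed-point basis is compatible with the restriction of the action to $U_v(\fgl_k)$ and $[\widetilde{\ul d}]$ is a $Z(U_v(\fgl_k))$-eigenvector whose joint eigenvalue is the one attached to the pattern $\Lambda(\widetilde{\ul d})$ defined above. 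Since $\fM$ is a generic Verma module, $\Gamma$ acts on it with simple joint spectrum, so the $\Gamma$-equivariant map $\Psi$ must send each $[\widetilde{\ul d}]$ to a scalar multiple $c_{\widetilde{\ul d}}\,\xi_{\widetilde{\ul d}}$, and $c_{\widetilde{\ul 0}}=1$ because $\Psi([\CO_{\fQ_0}])=1=\xi_{\widetilde{\ul 0}}$. Finally, applying $\Psi$ to $\ff_i[\widetilde{\ul d}]$ and comparing the coefficients of $\xi_{\widetilde{\ul d}+\delta_{i,j}}$ on the two sides gives, for every admissible $(i,j)$, the recursion
\[
c_{\widetilde{\ul d}+\delta_{i,j}}\cdot\ff_{i[\widetilde{\ul d},\,\widetilde{\ul d}+\delta_{i,j}]}
= c_{\widetilde{\ul d}}\cdot\langle\xi_{\widetilde{\ul d}+\delta_{i,j}},\,\ff_i\,\xi_{\widetilde{\ul d}}\rangle,
\]
where $\ff_{i[\widetilde{\ul d},\widetilde{\ul d}+\delta_{i,j}]}$ is the matrix coefficient of part~(b) and $\langle\xi_{\widetilde{\ul d}+\delta_{i,j}},\ff_i\xi_{\widetilde{\ul d}}\rangle$ is the coefficient of $\xi_{\widetilde{\ul d}+\delta_{i,j}}$ in $\ff_i\xi_{\widetilde{\ul d}}$, computed in~\cite{mtz}. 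Filling the pattern $\widetilde{\ul d}$ by a sequence of admissible intermediate patterns and multiplying out this recursion produces the closed product in~(a); its independence of the chosen filling is automatic from the existence of $\Psi$.
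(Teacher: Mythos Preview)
The paper does not prove this theorem: it simply records it as ``Theorem~6.3 of~\cite{fr} and Corollary~2.20 of~\cite{bf}'' and uses it as input for the rest of the argument. So there is no proof in the paper to compare against directly.

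That said, your outline is the right one and matches the approach in the cited references. Part~(b) is exactly the localization computation of~\cite{bf}, Corollary~2.20: determine the fixed points of $\sE_{\ul d,i}$, compute the tangent characters of $\fQ_{\ul d}$, $\fQ_{\ul d\pm i}$, $\sE_{\ul d,i}$ at the fixed points, read off the weight $t_j^2v^{-2d_{ij}}$ of $L_i$, and feed everything into the Bott--Lefschetz formula. The paper itself carries out precisely this computation in the affine setting (Propositions~\ref{character to correspondence} and~\ref{analog}, and the proof of Proposition~\ref{GTaff}), so you can see the mechanics there. Part~(a) --- the Gelfand--Tsetlin eigenvector argument followed by the recursion for the scalars $c_{\widetilde{\ul d}}$ --- is the strategy of~\cite{fr}, Theorem~6.3, transported from cohomology to $K$-theory; the simple-spectrum claim for $\Gamma$ on the universal Verma module is what makes the scalar $c_{\widetilde{\ul d}}$ well defined, and the normalization $c_{\widetilde{\ul 0}}=1$ anchors the recursion. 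Nothing in your sketch is wrong or missing in principle; the only substantive labour is the explicit tangent-character calculation and the bookkeeping in the product for $c_{\widetilde{\ul d}}$, both of which you have correctly flagged rather than performed.
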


\subsection{Quantum loop algebra $\textbf{U}_v(\textbf{L}{\fsl_n})$}
\label{yang fin} Let $(a_{kl})_{1\leq k,l\leq n-1}=A_{n-1}$ stand
for the Cartan matrix of $\fsl_n$. For the quantum loop algebra
$\textbf{U}_v(\textbf{L}{\fsl_n})$ we follow the notations
of~\cite{nak-quiver}. Namely, the quantum loop algebra
$U_v(\textbf{L}{\fsl_n})$ is an associative algebra over $\mathbb
Q(v)$ generated by $e_{k,r}$, $f_{k,r}$, $v^{\pm h_k}$, $h_{k,m}$
$(1\leq k,l\leq n-1, r \in \BZ,  m\in \BZ\setminus \{0\})$ with the
following defining relations:

\begin{equation}
\label{1} \psi_k^{s}(z)\psi_l^{s'}(w)=\psi_l^{s'}(w)\psi_k^{s}(z)
\end{equation}

\begin{equation}
\label{2} (z-v^{\pm a_{kl}}w)\psi_l^s(z)x_k^{\pm}(w)=x_k^{\pm}(w)
\psi_l^s(z)(v^{\pm a_{kl}}z-w)
\end{equation}

\begin{equation}
\label{3} [x_k^{+}(z), x_l^{-}(w)]=\frac{\delta_{kl}}{v-v^{-1}}
\{\delta(w/z)\psi_k^{+}(w)-\delta(z/w)\psi_k^{-}(z) \}
\end{equation}

\begin{equation}
\label{4} (z-v^{\pm
2}w)x_k^{\pm}(z)x_k^{\pm}(w)=x_k^{\pm}(w)x_k^{\pm}(z)(v^{\pm 2}z-w)
\end{equation}

\begin{equation}
\label{5} (z-v^{\pm
 a_{k,l}}w)x_k^{\pm}(z)x_l^{\pm}(w)=x_l^{\pm}(w)x_k^{\pm}(z)(v^{\pm
 a_{k,l}}z-w),\ k\neq l
\end{equation}

\begin{equation}
\label{6} \{x_i^{s}(z_1)x_i^{s}(z_2)x_{i\pm
1}^{s}(w)-(v+v^{-1})x_i^{s}(z_1)x_{i\pm
1}^{s}(w)x_i^{s}(z_2)+x_{i\pm
1}^{s}(w)x_i^{s}(z_1)x_i^{s}(z_2)\}+\{z_1\longleftrightarrow z_2
\}=0
\end{equation}

where $s,s'=\pm$. Here $\delta (z), x_k^{\pm}(z), \psi_k^{\pm}(z)$
are generating functions defined as following

$$\delta(z):=\sum_{r=-\infty}^\infty z^r,\
x_k^{+}(z):=\sum_{r=-\infty}^\infty e_{k,r}z^{-r},\
x_k^{-}(z):=\sum_{r=-\infty}^\infty f_{k,r}z^{-r},$$
$$\psi_k^{\pm}(z):=v^{\pm h_k} \exp \left(\pm(v-v^{-1})\sum_{m=1}^\infty
h_{k,\pm m}z^{\mp m}\right).$$

\subsection{Action of $\textbf{U}_v(\textbf{L}{\fsl_n})$ on $M$}
\label{yang cor} For any $0\leq i\leq n$ we will denote by
$\ul{\CW}{}_i$ the tautological $i$-dimensional vector bundle on
$\fQ_{\ul{d}}\times\bC$. Let
$\pi:\fQ_{\ul{d}}\times(\bC\backslash\{\infty\})\rightarrow
\fQ_{\ul{d}}$ denote the standard projection. We define the
generating series $\bb_i(z)$ with coefficients in the equivariant
$K$-theory of $\fQ_{\ul{d}}$ as follows:
$$\bb_i(z):=\Lambda^{\bullet}_{-1/z}(\pi_*(\ul{\CW}{}_i\mid_{\bC\backslash\{\infty\}}))=
1+\sum_{j\geq
1}{\Lambda^j(\pi_*(\ul{\CW}{}_i\mid_{\bC\backslash\{\infty\}}))(-z^{-1})^j}:\
M_{\ul{d}}\to M_{\ul{d}}[[z^{-1}]]$$

 Let $v$ stand for the character of $\widetilde{T}\times\BC^*:\
(\ul{t},v)\mapsto v$. We define the line bundle $L'_k:=v^kL_k$ on
the correspondence $\sE_{\ul{d},k}$, that is $L'_k$ and $L_k$ are
isomorphic as line bundles but the equivariant structure of $L'_k$
is obtained from the equivariant structure of $L_k$ by the twist by
a character $v^k$.

We also define the operators
\begin{equation}
\label{ddvas}
e_{k,r}:=t_{k+1}^{-1}v^{d_{k+1}-d_k+1-k}\bp_*((L'_k)^{\otimes
r}\otimes \bq^*):\ M_{\ul{d}}\to M_{\ul{d}-k}
\end{equation}

\begin{equation}
\label{ttris} f_{k,r}:=-t_k^{-1}v^{d_k-d_{k-1}+k}\bq_*(L_k\otimes
(L'_k)^{\otimes r}\otimes \bp^*):\ M_{\ul{d}}\to M_{\ul{d}+k}
\end{equation}

 Consider the following generating series of operators on $M$:

\begin{equation}
\label{dvas}
x_{k}^+(z)=\sum_{r=-\infty}^\infty e_{k,r}z^{-r}:\
M_{\ul{d}}\to M_{\ul{d}-k}[[z,z^{-1}]]
\end{equation}

\begin{equation}
\label{tris}
x_{k}^{-}(z)=\sum_{r=-\infty}^\infty f_{k,r}z^{-r}:\
M_{\ul{d}}\to M_{\ul{d}+k}[[z,z^{-1}]]
\end{equation}

\begin{multline}
\label{raz1} \psi_k^{\pm}(z)=\sum_{r=0}^{\pm\infty}
\psi^{\pm}_{k,r}z^{-r}:=t_{k+1}^{-1}t_kv^{d_{k+1}-2d_k+d_{k-1}-1}\times
\\ \left(\bb_k(zv^{-k-2})^{-1}\bb_k(zv^{-k})^{-1}\bb_{k-1}(zv^{-k})\bb_{k+1}(zv^{-k-2})\right)^{\pm}:M_{\ul{d}}\to  M_{\ul{d}}[[z^{\mp 1}]]
\end{multline}


 where $\left(\ \right)^{\pm}$ denotes the expansion at $z=\infty,\ 0$, respectively.

\begin{thm}
\label{var} These generating series of operators $\psi_{k}^\pm(z),
x_{k}^\pm(z)$ on $M$ satisfy the relations in
$\textbf{U}_v(\textbf{L}{\fsl_n})$, i.e. they give rise to the
action of $\textbf{U}_v(\textbf{L}{\fsl_n})$ on $M$.
\end{thm}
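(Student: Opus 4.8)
The plan is to prove \thmref{var} by the same strategy that \cite{ffnr} uses for the Yangian: compute all matrix coefficients in the fixed point basis $\{[\widetilde{\ul d}]\}$ and verify the relations entry by entry. By the Thomason localization isomorphism recalled before \thmref{feigin}, an equality of operators (or of generating series of operators) on $M$ holds if and only if it holds after restriction to the $\widetilde T\times\BC^*$-fixed points, so it suffices to check (\ref{1})--(\ref{6}) in the basis $\{[\widetilde{\ul d}]\}$.

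First I would refine \thmref{feigin}(b) to the whole family $e_{k,r},f_{k,r}$. Each correspondence $\sE_{\ul d,k}$ has only finitely many $\widetilde T\times\BC^*$-fixed points, one for every transition $\widetilde{\ul d}\leftrightarrow\widetilde{\ul d}+\delta_{k,j}$, and at such a point the line $L_k=\Gamma(\bC,\CW_k/\CW'_k)$, hence also $L'_k=v^kL_k$, carries a monomial $\widetilde T\times\BC^*$-weight $c_{k,j}$ (an explicit monomial in the $t_j$ and $v$, depending on the transition). Since $e_{k,0}=\fe_k$ and $f_{k,0}=\ff_k$, the projection formula gives that the nonzero matrix coefficients of $e_{k,r}$ and $f_{k,r}$ equal $c_{k,j}^{\,r}$ times the corresponding matrix coefficient of $\fe_k$, resp.\ $\ff_k$, from \thmref{feigin}(b). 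Summing over $r$ replaces the generating series by delta-functions: $x_k^+(z)[\widetilde{\ul d}]=\sum_j\delta(c_{k,j}/z)\,\fe_{k\,[\widetilde{\ul d},\,\widetilde{\ul d}-\delta_{k,j}]}\,[\widetilde{\ul d}-\delta_{k,j}]$, and similarly for $x_k^-(z)$ with the $\ff_k$. Next I would compute the eigenvalue of the diagonal operator $\bb_i(z)$ on $[\widetilde{\ul d}]$ from the fixed-point description $\CW_i=\bigoplus_{j\le i}\CO_\bC(-d_{ij}\cdot0)w_j$; after resumming the a priori infinite products it is an explicit rational function of $z$, and substituting into (\ref{raz1}) shows that $\psi_k^\pm(z)$ acts diagonally, its eigenvalue being the expansion at $z=\infty$ (resp.\ $z=0$) of a single rational function $\Psi_k(z,\widetilde{\ul d})$ whose zeros and poles lie exactly at the $c_{k,j}$ and their $v^{\pm2}$-shifts, the shifts $zv^{-k}$, $zv^{-k-2}$ in (\ref{raz1}) encoding the Cartan matrix.

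With these formulas in hand the relations are checked one at a time. Relation (\ref{1}) is trivial, as the $\psi_k^\pm$ are diagonal. Relation (\ref{2}) becomes the identity $\Psi_l(z,\widetilde{\ul d})=\frac{v^{\pm a_{kl}}z-c_{k,j}}{z-v^{\pm a_{kl}}c_{k,j}}\,\Psi_l(z,\widetilde{\ul d}\mp\delta_{k,j})$, read off from the divisor of $\Psi_l$. For (\ref{3}) with $k\ne l$ both sides vanish: the $k$- and $l$-correspondences are disjoint when $|k-l|>1$, and when $|k-l|=1$ the two length-two chains $\widetilde{\ul d}\to\widetilde{\ul d}-\delta_{k,j}\to\widetilde{\ul d}-\delta_{k,j}+\delta_{l,j'}$ are realised in either order with equal scalars and equal delta-supports. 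For (\ref{3}) with $k=l$ the off-diagonal entries of $[x_k^+(z),x_k^-(w)]$ cancel between the two orders of composition, and the diagonal entry, a sum over $j$ of products of an $\ff_k$ and an $\fe_k$ with coincident delta-supports, equals $\frac1{v-v^{-1}}\{\delta(w/z)\Psi_k^+(w,\widetilde{\ul d})-\delta(z/w)\Psi_k^-(z,\widetilde{\ul d})\}$ by a partial fraction decomposition of $\Psi_k(z,\widetilde{\ul d})$ whose residues are exactly these products --- the $z$-graded refinement of the verification of (\ref{quantgl3}) in \cite{bf}. Relations (\ref{4}) and (\ref{5}): multiplying by the linear prefactor converts the delta-functions into evaluations and reduces the relation to finitely many scalar identities among the $\fe_k$, resp.\ $\ff_k$, of \thmref{feigin}(b); the coincident contributions (two arrows of the same colour out of the same row $j$) are annihilated by the prefactor, since there the two delta-supports differ precisely by $v^{\pm2}$. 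Relation (\ref{6}) likewise reduces, after clearing the delta-functions, to finite scalar identities along length-three chains of arrows of colours $i$ and $i\pm1$, checked directly as the $q$-analogue of the corresponding computation in \cite{ffnr}.

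The main obstacle is the block of steps involving $\psi_k^\pm(z)$: computing its eigenvalue from the tautological bundles, checking that the infinite products in (\ref{raz1}) resum to a rational function, and verifying that its divisor and its partial fraction residues line up with the delta-supports $c_{k,j}$ and with the scalars $\fe_k,\ff_k$ of \thmref{feigin}(b) --- that is, establishing relations (\ref{2}) and (\ref{3}). This is precisely where the geometry of the correspondences and the particular $v$-shifts built into (\ref{raz1}) have to match the algebra on the nose; once that is in place, relations (\ref{1}), (\ref{4}), (\ref{5}), (\ref{6}) follow by routine, if lengthy, manipulation of the explicit matrix-coefficient formulas.
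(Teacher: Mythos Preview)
Your proposal is correct and follows the same overall strategy as the paper: localize to the fixed point basis, use the explicit matrix coefficients of $e_{k,r},f_{k,r}$ and the eigenvalues of $\psi_k^\pm(z)$ (recorded in the paper as Proposition~\ref{matrix_elements}), and verify relations~(\ref{1})--(\ref{6}) entry by entry. The order and the computations for~(\ref{4}),~(\ref{5}),~(\ref{6}), and~(\ref{3}) with $k\neq l$ are essentially identical to the paper's.

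The one organizational difference is in how~(\ref{2}) and~(\ref{3}) with $k=l$ are handled. You propose to check~(\ref{2}) directly from the product formula for the eigenvalue of $\psi_l^\pm$ (reading off the ratio $\Psi_l(z,\widetilde{\ul d})/\Psi_l(z,\widetilde{\ul d}\mp\delta_{k,j})$ from its divisor), and then to verify~(\ref{3}) by matching the diagonal entry of the commutator to the partial-fraction expansion of $\Psi_k$. The paper runs this in reverse: it first \emph{defines} auxiliary series $\varphi_k^\pm(z)$ so that~(\ref{3}) holds by construction (computing the commutator explicitly as a sum over $j$), then checks the analogue of~(\ref{2}) for $\varphi$ by a term-by-term computation in the fixed point basis, uses that relation to derive a first-order recursion for $\varphi_l^+(z)|_{\widetilde{\ul d}}$ under $\widetilde{\ul d}\mapsto\widetilde{\ul d}+\delta_{k,p}$, solves the recursion against the base case $\widetilde{\ul d}=0$, and only then identifies $\varphi_k^\pm=\psi_k^\pm$. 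Your route is more direct and avoids the intermediate $\varphi$; the paper's detour buys a slightly cleaner bookkeeping for the constant terms $\psi_{k,0}^\pm$ (fixed by appeal to Theorem~\ref{brav}) and isolates the one genuinely nontrivial identity---that the sum-over-$j$ form of the commutator agrees with the product form of $\psi_k^\pm$---as the single equality $\varphi=\psi$ at the end.
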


\begin{rem} {\em
For the quantum group $U_v(\fsl_n)$ (generated by $e_{k,0},
f_{k,0},\psi_{k,0}^{\pm}$ in $U_v({\textbf{L}\fsl_n})$) we get
formulas~(\ref{quantgl1}--\ref{quantgl3}).
Formulas~(\ref{dvas}--\ref{raz1}) are very similar to those for
equivariant cohomology in~\cite{ffnr}.}
\end{rem}

\begin{defn}
 To each $\widetilde{\ul{d}}$ we assign a collection of $\widetilde{T}\times\BC^*$-weights $s_{i,j}:=t_j^2v^{-2d_{ij}}$.
\end{defn}

\begin{prop}
\label{matrix_elements} a) The matrix coefficients of the operators
$f_{i,r}, e_{i,r}$ in the fixed point basis
$\{[\widetilde{\ul{d}}]\}$ of $M$ are as follows:
$$f_{{i,r}[\widetilde{\ul{d}},\widetilde{\ul{d}}{}']}=
-t_i^{-1}v^{d_i-d_{i-1}+i} s_{i,j}(s_{i,j}v^i)^r
(1-v^2)^{-1}\prod_{j\ne k\leq i}(1-s_{i,j}s_{i,k}^{-1})^{-1}
\prod_{k\leq i-1}(1-s_{i,j}s_{i-1,k}^{-1})$$ if
$\widetilde{\ul{d'}}=\widetilde{\ul{d}}+\delta_{i,j}$ for certain
$j\leq i$;
$$e_{{i,r}[\widetilde{\ul{d}},\widetilde{\ul{d}}{}']}=
t_{i+1}^{-1}v^{d_{i+1}-d_i+1-i}(
s_{i,j}v^{i+2})^r(1-v^2)^{-1}\prod_{j\ne k\leq
i}(1-s_{i,k}s_{i,j}^{-1})^{-1} \prod_{k\leq
i+1}(1-s_{i+1,k}s_{i,j}^{-1})$$ if
$\widetilde{\ul{d'}}=\widetilde{\ul{d}}-\delta_{i,j}$ for certain
$j\leq i$.

All the other matrix coefficients of $e_{i,r},f_{i,r}$ vanish.

\noindent b) The eigenvalue of $\psi_i^{\pm}(z)$ on
$\{[\widetilde{\ul{d}}]\}$ equals

$$ t_{i+1}^{-1}t_iv^{d_{i+1}-2d_{i}+d_{i-1}-1}\prod_{j\le
i}(1-z^{-1}v^{i+2}s_{i,j})^{-1}(1-z^{-1}v^is_{i,j})^{-1}\prod_{j\le
i+1}(1-z^{-1}v^{i+2}s_{i+1,j})\prod_{j\le
i-1}(1-z^{-1}v^is_{i-1,j}),
$$
where it is expanded in $z^{\mp 1}$  depending on the sign $\pm$.

\end{prop}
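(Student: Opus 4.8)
The plan is to compute the two sides of each claimed formula by localization to the $\widetilde{T}\times\BC^*$-fixed points, reducing everything to the combinatorics of the fixed-point set described in~\ref{fixed points}. First I would record the character of the tautological bundle $\ul{\CW}{}_i$ at a fixed point $\widetilde{\ul{d}}$: by the explicit description $\CW_i=\bigoplus_{j\le i}\CO_\bC(-d_{ij}\cdot 0)w_j$, the fiber of $\pi_*(\ul{\CW}{}_i|_{\bC\setminus\{\infty\}})$ at $\widetilde{\ul{d}}$ is $\bigoplus_{j\le i}\bigoplus_{m=0}^{d_{ij}-1}t_j^2v^{-2m}$ (using $v(z)=v^{-2}z$ so that the coordinate $z$ has weight $v^2$ near $0$). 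Hence $\Lambda^\bullet_{-1/z}$ of this bundle is $\prod_{j\le i}\prod_{m=0}^{d_{ij}-1}(1-z^{-1}t_j^2v^{-2m})$, and since $s_{i,j}=t_j^2v^{-2d_{ij}}$ this telescopes: $\bb_i(z)|_{\widetilde{\ul{d}}}=\prod_{j\le i}\frac{1-z^{-1}t_j^2}{1-z^{-1}s_{i,j}}$. Substituting the four shifted arguments $zv^{-k-2},zv^{-k},zv^{-k},zv^{-k-2}$ into~(\ref{raz1}) and cancelling all the $t_j^2$-factors (they cancel between $\bb_{k-1},\bb_{k+1}$ in the numerator and $\bb_k^{-1}\bb_k^{-1}$ in the denominator, after the shift) yields exactly the product in part~b). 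This is the bulk of part~b); the only care needed is bookkeeping of the $v$-powers and the expansion direction $z^{\mp1}$, which matches the $(\ )^\pm$ convention.

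For part~a), I would use the standard localization formula for $\bp_*\bq^*$-type correspondences: the correspondence $\sE_{\ul{d},k}$ has isolated $\widetilde{T}\times\BC^*$-fixed points indexed by pairs $(\widetilde{\ul{d}},\widetilde{\ul{d}}\pm\delta_{k,j})$, and the matrix coefficient is a ratio of $\Lambda^\bullet_{-1}$ of normal/tangent spaces times the relevant line-bundle weight. Concretely, the matrix coefficient of $\bq_*(L_k\otimes(L'_k)^{\otimes r}\otimes\bp^*)$ from $[\widetilde{\ul{d}}]$ to $[\widetilde{\ul{d}}+\delta_{k,j}]$ equals $w(L_k)w(L'_k)^r$ times the ratio of the $K$-theoretic Euler class of $T\sE$ over that of $T\fQ_{\ul{d}+k}$ at the fixed point, all pulled back appropriately. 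Rather than recompute these Euler classes from scratch, I would invoke Theorem~\ref{feigin}b): the operators $\ff_k,\fe_k$ of~(\ref{quantgl2}--\ref{quantgl3}) are exactly $f_{k,0},e_{k,0}$, so I already know the full matrix coefficient of $f_{k,0},e_{k,0}$. Now the key observation is that $f_{k,r}$ and $f_{k,0}$ differ only by inserting the extra factor $(L'_k)^{\otimes r}$, and on the one-dimensional fixed locus of $\sE_{\ul{d},k}$ connecting $\widetilde{\ul{d}}$ to $\widetilde{\ul{d}}+\delta_{k,j}$ the line bundle $L'_k=v^kL_k$ has fiber $\Gamma(\bC,\CW_k/\CW'_k)$; since $\CW_k/\CW'_k$ is supported at $0$ with the relevant summand being $\CO(-d_{kj})/\CO(-d_{kj}-1)$ in the $w_j$-line, its weight is $t_j^2v^{-2d_{kj}}\cdot v^k=s_{k,j}v^k$. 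Hence passing from $r=0$ to general $r$ multiplies the matrix coefficient of $f_k$ by $(s_{k,j}v^k)^r$, and comparing $s_{k,j}=t_j^2v^{-2d_{kj}}$ with the $t_j^2v^{-2d_{i,j}}$ and $(1-t_j^2t_l^{-2}v^{\ldots})$ factors in Theorem~\ref{feigin}b) shows those are literally $s_{i,j}$ and $(1-s_{i,j}s_{i,l}^{-1})$ after rewriting. The same argument with $L'_k$ having weight $s_{k,j}v^{k+2}$ on the relevant locus for the $\bp_*\bq^*$ direction (the extra $v^2$ coming from the degree shift, since for $e_k$ the support sheaf is $\CW_i/\CW'_i$ at the $\widetilde{\ul{d}}$-side) gives the $e_{i,r}$ formula with $(s_{i,j}v^{i+2})^r$.

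So the two main tasks are: (i) verify the telescoping identity for $\bb_i(z)$ at a fixed point and the ensuing cancellation for $\psi_i^\pm$, and (ii) identify the weight of $L'_k$ on each one-dimensional fixed component of the correspondence and conclude by comparison with Theorem~\ref{feigin}b). I expect the main obstacle to be purely bookkeeping: getting the precise $v$-exponents right in the normal-bundle contributions (equivalently, fixing once and for all the conventions $v(z)=v^{-2}z$, $\ul{t}(w_i)=t_i^2w_i$, and the twist $L'_k=v^kL_k$), and checking that the vanishing of all other matrix coefficients of $e_{i,r},f_{i,r}$ follows from the vanishing of those of $\fe_i,\ff_i$ together with the fact that $(L'_k)^{\otimes r}$ is a line bundle (so its insertion does not create new fixed components of the correspondence). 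There is no conceptual difficulty beyond this: every ingredient—the fixed-point classification, the structure of the correspondences, and the $r=0$ matrix coefficients—is already available from~\cite{fk},~\cite{bf},~\cite{fr} and the earlier part of this section.
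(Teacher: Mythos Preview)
Your approach is essentially the paper's own: for part~a) you reduce to Theorem~\ref{feigin}b) (the $r=0$ case) plus the observation that the extra twist $(L'_k)^{\otimes r}$ contributes the $r$-th power of the weight of $L'_k$ at the relevant fixed point of $\sE_{\ul{d},k}$; for part~b) you compute $\bb_i(z)$ at a fixed point and substitute into~(\ref{raz1}). This is exactly how the paper proceeds, and your identification of the weights of $L'_k$ as $s_{i,j}v^i$ (on the $f$-side) and $s_{i,j}v^{i+2}$ (on the $e$-side, coming from the degree shift $d_{ij}\leadsto d_{ij}-1$) is correct.

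There is, however, a slip in your computation of $\bb_i(z)|_{\widetilde{\ul{d}}}$. The character set you wrote, $\bigoplus_{j\le i}\bigoplus_{m=0}^{d_{ij}-1}t_j^2v^{-2m}$, is not what the paper means by the stalk of $\pi_*(\ul{\CW}_i|_{\bC\setminus\infty})$, and in any case the finite product $\prod_{m=0}^{d_{ij}-1}(1-z^{-1}t_j^2v^{-2m})$ does not ``telescope'' to a ratio of two linear factors (you may be conflating a geometric \emph{sum} with a $q$-Pochhammer \emph{product}). The paper's statement is simply that the characters at $\widetilde{\ul{d}}$ are $\{s_{i,j}\}_{j\le i}$, so that
\[
\bb_i(z)|_{\widetilde{\ul{d}}}=\prod_{j\le i}(1-z^{-1}s_{i,j}),
\]
and then~(\ref{raz1}) yields the formula in b) on the nose, with no extra $t_j^2$-factors to cancel. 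With your formula for $\bb_i$ you would pick up spurious leftover factors $(1-z^{-1}v^{i+2}t_{i+1}^2)(1-z^{-1}v^it_i^2)^{-1}$ in $\psi_i^\pm$, which do not match the claimed eigenvalue. Once this one identification is corrected, the rest of your argument goes through verbatim and coincides with the paper's proof.
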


\begin{proof}

 a) Follows directly from Theorem~\ref{feigin}b).

 b) Follows from the multiplicativity of $\Lambda^{\bullet}_z(L)$ on long exact sequences of coherent sheaves and the fact
that $\{s_{i,j}\}_{j\leq i}$ is the set of
$\widetilde{T}\times\BC^*$-characters in the stalk of
$\pi_*(\ul{\CW}{}_i\mid_{\bC\backslash\{\infty\}})$ at the fixed
point $\{[\widetilde{\ul{d}}]\}\in \fQ_{\ul{d}}$.
\end{proof}

\label{quotient}
 Now we formulate a corollary which will be used in Section~\ref{psay}.
For any $0\leq m<i\leq n$ we will denote by $\ul{\CW}{}_{mi}$ the
quotient $\ul{\CW}{}_i/\ul{\CW}{}_m$ of the tautological vector
bundles on $\fQ_{\ul{d}}\times\bC$. Similarly to the above, we
introduce the generating series:
$$\bb_{mi}(z):=\Lambda^{\bullet}_{-1/z}(\pi_*(\ul{\CW}{}_{mi}\mid_{\bC\backslash \{\infty\}})):\ M_{\ul{d}}\to M_{\ul{d}}[[z^{-1}]]$$

\begin{cor}\label{faktor}
For any $m<i$ we have
$$\psi_i^{\pm}(z)\mid
_{M_{\ul{d}}}=t_{i+1}^{-1}t_iv^{d_{i+1}-2d_i+d_{i-1}-1}
\left(\bb_{mi}(zv^{-i-2})^{-1}\bb_{mi}(zv^{-i})^{-1}\bb_{m,i-1}(zv^{-i})\bb_{m,i+1}(zv^{-i-2})\right)^{\pm}.$$

\end{cor}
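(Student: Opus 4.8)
The plan is to reduce the corollary to a single multiplicativity identity for the series $\bb_\bullet(z)$ and then cancel a common factor; the one substantive ingredient is that identity, everything else being purely formal. First I would establish, for all $0\le m\le j\le n$ (with the convention $\ul{\CW}{}_{jj}:=0$, so that $\bb_{jj}(z)=1$), the factorization
\[
\bb_j(z)=\bb_m(z)\cdot\bb_{mj}(z)
\]
inside the multiplicative group $1+z^{-1}K^{\widetilde{T}\times\BC^*}(\fQ_{\ul{d}})[[z^{-1}]]$. This follows straight from the defining short exact sequence of coherent sheaves $0\to\ul{\CW}{}_m\to\ul{\CW}{}_j\to\ul{\CW}{}_{mj}\to0$ on $\fQ_{\ul{d}}\times\bC$: restricting it to $\fQ_{\ul{d}}\times(\bC\setminus\{\infty\})$ keeps it exact, and, $\pi$ being an affine morphism, $\pi_*$ is exact on quasi-coherent sheaves, so $[\pi_*(\ul{\CW}{}_j|_{\bC\setminus\{\infty\}})]=[\pi_*(\ul{\CW}{}_m|_{\bC\setminus\{\infty\}})]+[\pi_*(\ul{\CW}{}_{mj}|_{\bC\setminus\{\infty\}})]$ in equivariant $K$-theory; applying $\Lambda^{\bullet}_{-1/z}$, which is a homomorphism from the additive group of $K$-theory to $1+z^{-1}K(\cdot)[[z^{-1}]]$, yields the displayed factorization. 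This is exactly the mechanism already used in the proof of Proposition~\ref{matrix_elements}b).

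Next I would substitute this factorization, for $j=i$, $i-1$ and $i+1$, into the defining formula~(\ref{raz1}) for $\psi_i^{\pm}(z)$. Rewriting the relevant product as a fraction and using the factorizations with the arguments $zv^{-i-2}$ and $zv^{-i}$,
\begin{multline*}
\frac{\bb_{i-1}(zv^{-i})\,\bb_{i+1}(zv^{-i-2})}{\bb_i(zv^{-i-2})\,\bb_i(zv^{-i})}\\
=\frac{\bb_m(zv^{-i})\,\bb_m(zv^{-i-2})}{\bb_m(zv^{-i-2})\,\bb_m(zv^{-i})}\cdot\frac{\bb_{m,i-1}(zv^{-i})\,\bb_{m,i+1}(zv^{-i-2})}{\bb_{mi}(zv^{-i-2})\,\bb_{mi}(zv^{-i})},
\end{multline*}
and the first factor on the right-hand side is $1$, because the two shifts $zv^{-i-2}$ and $zv^{-i}$ occur in both its numerator and its denominator. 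The scalar prefactor $t_{i+1}^{-1}t_iv^{d_{i+1}-2d_i+d_{i-1}-1}$ of~(\ref{raz1}) is untouched, and the whole computation takes place in $1+z^{-1}K^{\widetilde{T}\times\BC^*}(\fQ_{\ul{d}})[[z^{-1}]]$ \emph{before} the expansion $(\ )^{\pm}$ at $z=\infty$, resp. $z=0$, is applied; taking $(\ )^{\pm}$ of the two equal expressions and extending scalars to $\on{Frac}(K^{\widetilde{T}\times\BC^*}(pt))$ then gives the asserted identity of operators on $M_{\ul{d}}$.

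I do not anticipate a genuine obstacle: once the factorization $\bb_j=\bb_m\bb_{mj}$ is in hand the argument is entirely formal. The two points deserving a word of care are the additivity of $\pi_*$ on short exact sequences (dealt with above) and the degenerate case $m=i-1$, in which $\bb_{m,i-1}(z)=\bb_{i-1,i-1}(z)=1$ and the factorization $\bb_{i-1}=\bb_m\cdot 1$ is a tautology, so the formula still reads off correctly. As an alternative one could instead verify the corollary on the fixed-point basis $\{[\widetilde{\ul{d}}]\}$ using Proposition~\ref{matrix_elements}b): the $\widetilde{T}\times\BC^*$-characters of $\pi_*(\ul{\CW}{}_{mi}|_{\bC\setminus\{\infty\}})$ at $\widetilde{\ul{d}}$ are those of $\pi_*(\ul{\CW}{}_i|_{\bC\setminus\{\infty\}})$ with the characters coming from $\ul{\CW}{}_m$ removed, so the corresponding eigenvalue factors of $\bb_m$ cancel in the same way — but the $K$-theoretic derivation above is shorter.
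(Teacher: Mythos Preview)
Your proposal is correct and follows essentially the same route as the paper: both argue that the short exact sequence $0\to\ul{\CW}{}_m\to\ul{\CW}{}_j\to\ul{\CW}{}_{mj}\to0$ together with the multiplicativity of $\Lambda^{\bullet}_{-1/z}$ yields $\bb_j(z)=\bb_m(z)\bb_{mj}(z)$, and then substitute into~(\ref{raz1}) to cancel the $\bb_m$ factors. Your write-up is in fact more careful than the paper's (you spell out the exactness of $\pi_*$ and the degenerate case $m=i-1$), but the underlying argument is identical.
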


\begin{proof}  Since $\Lambda^{\bullet}_z(L):=\sum_{j\geq 0}
z^i \Lambda^iL$ is multiplicative on long exact sequences, we have:
$$\Lambda^{\bullet}_{-1/z}(\ul{\CW}{}_i)=\Lambda^{\bullet}_{-1/z}(\ul{\CW}{}_m)\Lambda^{\bullet}_{-1/z}(\ul{\CW}{}_{mi}),$$
while on the other hand
$$\Lambda^{\bullet}_{-1/z}(\ul{\CW}{}_i)=\bb_i(z),\ \Lambda^{\bullet}_{-1/z}(\ul{\CW}{}_{mi})=\bb_{mi}(z),\ \Lambda^{\bullet}_{-1/z}(\ul{\CW}{}_m)=\bb_m(z).$$
 Now the result follows from~(\ref{raz1}).
\end{proof}

\section{Proof of Theorem~\ref{var}}
\label{techn}

Let us check equation~(\ref{4}) firstly. We will prove it for
$x_k^{-}$ (case $x_k^{+}$ is entirely analogous).

\begin{proof}
  We need to verify
$f_{i,a+1}f_{i,b}-v^{-2}f_{i,a}f_{i,b+1}=v^{-2}f_{i,b}f_{i,a+1}-f_{i,b+1}f_{i,a}$
for any integers $a, b$. Let us compute both sides in the fixed
point basis:

\medskip
\noindent a)
$[\widetilde{\ul{d}},\widetilde{\ul{d}}{}'=\widetilde{\ul{d}}+\delta_{i,j_1}+\delta_{i,j_2}]$
($j_1 \neq j_2$).
$$\left(f_{i,a+1}f_{i,b}-v^{-2}f_{i,a}f_{i,b+1}\right)_{[\widetilde{\ul{d}},\widetilde{\ul{d}}{}']}=
Pv^{i(a+b+1)}\times$$
$$\left[v^2s_{i,j_1}^bs_{i,j_2}^{a+1}(1-s_{i,j_1}s_{i,j_2}^{-1})^{-1}(1-v^2s_{i,j_2}s_{i,j_1}^{-1})^{-1}-
s_{i,j_1}^{b+1}s_{i,j_2}^{a}(1-s_{i,j_1}s_{i,j_2}^{-1})^{-1}(1-v^2s_{i,j_2}s_{i,j_1}^{-1})^{-1}+\{j_1\longleftrightarrow
j_2\}\right]$$
$$=Pv^{i(a+b+1)}\left[(v^2s_{i,j_1}^bs_{i,j_2}^{a+1}-s_{i,j_1}^{b+1}s_{i,j_2}^a)(1-s_{i,j_1}s_{i,j_2}^{-1})^{-1}(1-v^2s_{i,j_2}s_{i,j_1}^{-1})^{-1}+
\{j_1\longleftrightarrow j_2\}\right].$$

\medskip

Similarly
$$\left(v^{-2}f_{i,b}f_{i,a+1}-f_{i,b+1}f_{i,a}\right)_{[\widetilde{\ul{d}},\widetilde{\ul{d}}{}']}=
Pv^{i(a+b+1)}\times$$
$$\left[(s_{i,j_1}^{a+1}s_{i,j_2}^b-v^2s_{i,j_1}^as_{i,j_2}^{b+1})(1-s_{i,j_1}s_{i,j_2}^{-1})^{-1}(1-v^2s_{i,j_2}s_{i,j_1}^{-1})^{-1}+
\{j_1\longleftrightarrow j_2\}\right],
$$

\medskip

where
$$P=t_i^{-2}v^{2d_i-2d_{i-1}+2i-1}s_{i,j_1}s_{i,j_2} \times$$
$$(1-v^2)^{-2}\prod_{j_1,j_2\ne k\leq i
}(1-s_{i,j_1}s_{i,k}^{-1})^{-1}(1-s_{i,j_2}s_{i,k}^{-1})^{-1}
\prod_{k\leq
i-1}(1-s_{i,j_1}s_{i-1,k}^{-1})(1-s_{i,j_2}s_{i-1,k}^{-1}).
$$

\medskip

So we have to prove that
$$(s_{i,j_1}^bs_{i,j_2}^{a+1}-v^{-2}s_{i,j_1}^{b+1}s_{i,j_2}^a-v^{-2}s_{i,j_1}^{a+1}s_{i,j_2}^b+s_{i,j_1}^as_{i,j_2}^{b+1})(1-s_{i,j_1}s_{i,j_2}^{-1})^{-1}(1-v^2s_{i,j_2}s_{i,j_1}^{-1})^{-1}=$$
$$(s_{i,j_1}^bs_{i,j_2}^a(s_{i,j_2}-v^{-2}s_{i,j_1})+s_{i,j_1}^as_{i,j_2}^b(s_{i,j_2}-v^{-2}s_{i,j_1}))s_{i,j_1}s_{i,j_2}(s_{i,j_2}-s_{i,j_1})^{-1}(s_{i,j_1}-v^2s_{i,j_2})^{-1}=$$
$$=\frac{s_{i,j_1}s_{i,j_2}(s_{i,j_1}^as_{i,j_2}^b+s_{i,j_1}^bs_{i,j_2}^a)}{v^2(s_{i,j_1}-s_{i,j_2})}$$
is antisymmetric with respect to $\{j_1\longleftrightarrow j_2\}$
which is obvious.

\medskip
\noindent b)
$[\widetilde{\ul{d}},\widetilde{\ul{d}}{}'=\widetilde{\ul{d}}+2\delta_{i,j_1}]$.

In this case define
$$P':=t_i^{-2}v^{2d_i-2d_{i-1}+2i-1}s_{i,j_1}^2
\times$$ $$(1-v^2)^{-2}\prod_{j_1\ne k\leq i
}(1-s_{i,j_1}s_{i,k}^{-1})^{-1}(1-v^{-2}s_{i,j_1}s_{i,k}^{-1})^{-1}
\prod_{k\leq
i-1}(1-s_{i,j_1}s_{i-1,k}^{-1})(1-v^{-2}s_{i,j_1}s_{i-1,k}^{-1}).
$$

Then:
$$\left(f_{i,a+1}f_{i,b}-v^{-2}f_{i,a}f_{i,b+1}\right)_{[\widetilde{\ul{d}},\widetilde{\ul{d}}{}']}=
P'v^{i(a+b+1)}s_{i,j_1}^{a+b+1}(v^{-2(a+1)}-v^{-2}v^{-2a})=0=\left(v^{-2}f_{i,b}f_{i,a+1}-f_{i,b+1}f_{i,a}\right)_{[\widetilde{\ul{d}},\widetilde{\ul{d}}{}']}.$$

\medskip

So the equality holds again.
\end{proof}

\medskip

Let us check~(\ref{5}) now. We will prove it only for $x_k^{-}$
again.

\begin{proof}
  If $|k-l|>1$ then it is obvious that in the fixed point basis
the formulas are the same. So let us check it for $l=i+1, k=i$. In
other words, for any integers $a, b$ we have to verify
$f_{i,a+1}f_{i+1,b}-vf_{i,a}f_{i+1,b+1}=vf_{i+1,b}f_{i,a+1}-f_{i+1,b+1}f_{i,a}$.

 Let us compute matrix coefficients corresponding to the pair $[\widetilde{\ul{d}},\widetilde{\ul{d}}{}'=\widetilde{\ul{d}}+\delta_{i,j_1}+\delta_{i+1,j_2}]$
for both sides (here $j_1$ and $j_2$ might be equal).

We have
$$(f_{i,a+1}f_{i+1,b}-vf_{i,a}f_{i+1,b+1})_{[\widetilde{\ul{d}},\widetilde{\ul{d}}{}']}=
Pv\left(1-s_{i+1,j_2}s_{i,j_1}^{-1}\right)
\left[v^{i(a+1)+(i+1)b}s_{i+1,j_2}^bs_{i,j_1}^{a+1}-v^{ia+(i+1)(b+1)+1}s_{i+1,j_2}^{b+1}s_{i,j_1}^a\right],
$$

$$(vf_{i+1,b}f_{i,a+1}-f_{i+1,b+1}f_{i,a})_{[\widetilde{\ul{d}},\widetilde{\ul{d}}{}']}=
P\left(1-v^2s_{i+1,j_2}s_{i,j_1}^{-1}\right)
\left[v^{i(a+1)+(i+1)b+1}s_{i+1,j_2}^bs_{i,j_1}^{a+1}-v^{ia+(i+1)(b+1)}s_{i+1,j_2}^{b+1}s_{i,j_1}^a\right],
$$

where
$$P=t_{i+1}^{-1}t_i^{-1}v^{d_{i+1}-d_{i-1}+2i}(1-v^2)^{-2}s_{i,j_1}s_{i+1,j_2} \times$$
$$\prod_{j_1\ne k\leq i
}(1-s_{i,j_1}s_{i,k}^{-1})^{-1}\prod_{k\leq
i-1}(1-s_{i,j_1}s_{i-1,k}^{-1})\times \prod_{j_2\ne k\leq i+1
}(1-s_{i+1,j_2}s_{i+1,k}^{-1})^{-1}\prod_{j_1\ne k\leq
i}(1-s_{i+1,j_2}s_{i,k}^{-1}).$$

\medskip

After dividing both right hand sides by
$Ps_{i,j_1}^{a-1}s_{i+1,j_2}^bv^{ia+(i+1)b}$ we get an equality:

$v(v^is_{i,j_1}-v^{i+2}s_{i+1,j_2})(s_{i,j_1}-s_{i+1,j_2})=(v^{i+1}s_{i,j_1}-v^{i+1}s_{i+1,j_2})(s_{i,j_1}-v^2s_{i+1,j_2})$.
\end{proof}

\medskip

Let us check~(\ref{3}) for the case $k \neq l$.
\begin{proof}
We have to prove $e_{k,a}f_{l,b}=f_{l,b}e_{k,a}$ for any integers
$a, b$.

 This is obvious when $|k-l|>1$, since matrix coefficients in the fixed
point basis are the same. Let us check the only nontrivial case:
$k=i, l=i+1$ (pair $(k=i+1, l=i)$ is analogous). We consider the
pair of fixed points
$[\widetilde{\ul{d}},\widetilde{\ul{d}}{}'=\widetilde{\ul{d}}-\delta_{i,j_1}+\delta_{i+1,j_2}]$
(here $j_1, j_2$ might be equal).

$$e_{i,a}f_{i+1,b}\mid_{[\widetilde{\ul{d}},\widetilde{\ul{d}}{}']}=P(1-s_{i+1,j_2}s_{i,j_1}^{-1})(1-v^{-2}s_{i+1,j_2}s_{i,j_1}^{-1}),$$

$$f_{i+1,b}e_{i,a}\mid_{[\widetilde{\ul{d}},\widetilde{\ul{d}}{}']}=P(1-v^{-2}s_{i+1,j_2}s_{i,j_1}^{-1})(1-s_{i+1,j_2}s_{i,j_1}^{-1}),$$

where
$$P=-t_{i+1}^{-1}t_i^{-1}v^{2d_{i+1}-2d_i+4}(1-v^2)^{-2}s_{i,j_1}^as_{i+1,j_2}^bv^{a(i+2)+b(i+1)}\times$$
$$\prod_{j_2\ne k\leq i+1
}(1-s_{i+1,j_2}s_{i+1,k}^{-1})^{-1}\prod_{j_1\ne k\leq
i}(1-s_{i+1,j_2}s_{i,k}^{-1})\times \prod_{j_1\ne k\leq i
}(1-s_{i,j_1}^{-1}s_{i,k})^{-1}\prod_{j_2\ne k\leq
i+1}(1-s_{i,j_1}^{-1}s_{i+1,k}).$$

This completes the proof of equation~(\ref{3}) in the case $k\ne l$.
\end{proof}

\medskip

Let us check~(\ref{6}) fourthly. We will prove it only for $x_k^{-}$
again.

\begin{proof}
We have to prove that for any integers $a, b, c$ and $j=i\pm 1$ the
following equality holds:
$$\{f_{i,a}f_{i,b}f_{j,c
}-(v+v^{-1})f_{i,a}f_{j,c}f_{i,b}+f_{j,c}f_{i,a}f_{i,b}\}+\{a\longleftrightarrow
b\}=0.$$

Let us consider the case $j=i+1$ (the second case is similar). We
will show that matrix coefficients in the fixed point basis of the
first bracket are antisymmetric with respect to a change
$\{a\longleftrightarrow b\}$.

\medskip
\noindent a)
$[\widetilde{\ul{d}},\widetilde{\ul{d}}{}'=\widetilde{\ul{d}}+\delta_{i,j_1}+\delta_{i,j_2}+\delta_{i+1,j_3}]$
($j_1 \neq j_2$).
$$
f_{i,a}f_{i,b}f_{i+1,c}\mid_{[\widetilde{\ul{d}},\widetilde{\ul{d}}{}']}=$$$$Pv^2\left[s_{i,j_1}^as_{i,j_2}^b(1-s_{i+1,j_3}s_{i,j_2}^{-1})(1-s_{i+1,j_3}s_{i,j_1}^{-1})
(1-s_{i,j_2}s_{i,j_1}^{-1})^{-1}(1-v^2s_{i,j_1}s_{i,j_2}^{-1})^{-1}+\{j_1\longleftrightarrow
j_2\}
 \right],
$$

$$
f_{i,a}f_{i+1,c}f_{i,b}\mid_{[\widetilde{\ul{d}},\widetilde{\ul{d}}{}']}=$$$$Pv\left[s_{i,j_1}^as_{i,j_2}^b(1-v^2s_{i+1,j_3}s_{i,j_2}^{-1})(1-s_{i+1,j_3}s_{i,j_1}^{-1})
(1-s_{i,j_2}s_{i,j_1}^{-1})^{-1}(1-v^2s_{i,j_1}s_{i,j_2}^{-1})^{-1}+\{j_1\longleftrightarrow
j_2\}
 \right],
$$

$$
f_{i+1,c}f_{i,a}f_{i,b}\mid_{[\widetilde{\ul{d}},\widetilde{\ul{d}}{}']}=$$$$P\left[s_{i,j_1}^as_{i,j_2}^b(1-v^2s_{i+1,j_3}s_{i,j_2}^{-1})(1-v^2s_{i+1,j_3}s_{i,j_1}^{-1})
(1-s_{i,j_2}s_{i,j_1}^{-1})^{-1}(1-v^2s_{i,j_1}s_{i,j_2}^{-1})^{-1}+\{j_1\longleftrightarrow
j_2\}
 \right].
$$

\medskip

Thus:
$$(f_{i,a}f_{i,b}f_{i+1,c
}-(v+v^{-1})f_{i,a}f_{i+1,c}f_{i,b}+f_{i+1,c}f_{i,a}f_{i,b})_{[\widetilde{\ul{d}},\widetilde{\ul{d}}{}']}=Ps_{i,j_1}^{a}s_{i,j_2}^{b}\times$$$$
\left(\frac
{v^2(s_{i,j_2}-s_{i+1,j_3})(s_{i,j_1}-s_{i+1,j_3})}{{(s_{i,j_1}-s_{i,j_2})(s_{i,j_2}-v^2s_{i,j_1})}}
-
\frac{(1+v^2)(s_{i,j_2}-v^2s_{i+1,j_3})(s_{i,j_1}-s_{i+1,j_3})}{(s_{i,j_1}-s_{i,j_2})(s_{i,j_2}-v^2s_{i,j_1})}+\right.$$
$$\left.\frac{(s_{i,j_2}-v^2s_{i+1,j_3})(s_{i,j_1}-v^2s_{i+1,j_3})}{(s_{i,j_1}-s_{i,j_2})(s_{i,j_2}-v^2s_{i,j_1})}\right)+\{j_1\longleftrightarrow
j_2\}=$$$$P(1-v^2)\frac{s_{i+1,j_3}s_{i,j_1}^{a}s_{i,j_2}^{b}}{s_{i,j_1}-s_{i,j_2}}+\{j_1\longleftrightarrow
j_2\}=Ps_{i+1,j_3}(1-v^2)\frac{s_{i,j_1}^{a}s_{i,j_2}^{b}-s_{i,j_1}^{b}s_{i,j_2}^{a}}{s_{i,j_1}-s_{i,j_2}},
$$

where
$$
P=-t_{i+1}^{-1}t_i^{-2}v^{d_{i+1}+d_i-2d_{i-1}+3i-1}s_{i,j_1}s_{i,j_2}s_{i+1,j_3}^{c+1}v^{c(i+1)+i(a+b)}(1-v^2)^{-3}$$
$$\times \prod_{k\leq
i-1}\left((1-s_{i,j_2}s_{i-1,k}^{-1})(1-s_{i,j_1}s_{i-1,k}^{-1})\right)\times$$$$\prod_{j_3\ne
k\leq i+1 }(1-s_{i+1,j_3}s_{i+1,k}^{-1})^{-1}\prod_{j_1, j_2 \ne
k\leq i}(1-s_{i+1,j_3}s_{i,k}^{-1})\prod_{j_1, j_2\ne k\leq i
}(1-s_{i,j_2}s_{i,k}^{-1})^{-1}\prod_{j_1,j_2\ne k\leq i
}(1-s_{i,j_1}s_{i,k}^{-1})^{-1}.
$$

\medskip

We see that $$f_{i,a}f_{i,b}f_{i+1,c
}-(v+v^{-1})f_{i,a}f_{i+1,c}f_{i,b}+f_{i+1,c}f_{i,a}f_{i,b}\mid_{[\widetilde{\ul{d}},\widetilde{\ul{d}}{}']}=
Ps_{i+1,j_3}(1-v^2)\frac{s_{i,j_1}^{a}s_{i,j_2}^{b}-s_{i,j_1}^{b}s_{i,j_2}^{a}}{s_{i,j_1}-s_{i,j_2}}$$
is antisymmetric with respect to $a\longleftrightarrow b.$

\medskip
\noindent
b)
$[\widetilde{\ul{d}},\widetilde{\ul{d}}{}'=\widetilde{\ul{d}}+2\delta_{i,j_1}+\delta_{i+1,j_3}].$

By the same calculation one gets:
$$(f_{i,a}f_{i,b}f_{i+1,c
}-(v+v^{-1})f_{i,a}f_{i+1,c}f_{i,b}+f_{i+1,c}f_{i,a}f_{i,b})_{[\widetilde{\ul{d}},\widetilde{\ul{d}}{}']}=$$
$$P'\left[v^2(1-s_{i+1,j_3}s_{i,j_1}^{-1})-(1+v^2)(1-v^2s_{i+1,j_3}s_{i,j_1}^{-1})+(1-v^4s_{i+1,j_3}s_{i,j_1}^{-1})\right]=0,$$

where
$$P'=-t_{i+1}^{-1}t_i^{-2}v^{d_{i+1}+d_i-2d_{i-1}+3i-1}s_{i,j_1}^{a+b+2}s_{i+1,j_3}^{c+1}v^{c(i+1)+i(a+b)}(1-v^2)^{-3}$$
$$\times \prod_{k\leq
i-1}\left((1-v^{-2}s_{i,j_1}s_{i-1,k}^{-1})(1-s_{i,j_1}s_{i-1,k}^{-1})\right)\times$$$$\prod_{j_3\ne
k\leq i+1 }(1-s_{i+1,j_3}s_{i+1,k}^{-1})^{-1}\prod_{j_1 \ne k\leq
i}\left((1-s_{i+1,j_3}s_{i,k}^{-1})^{-1}(1-v^{-2}s_{i,j_1}s_{i,k}^{-1})(1-s_{i,j_1}s_{i,k}^{-1})\right)^{-1}.
$$

\medskip
This completes the proof of~(\ref{6}).
\end{proof}

\medskip
 Now we will introduce the series of operators
$\varphi_k^{\pm}(z)_{\mid M_{\ul{d}}}=\sum_{r=0}^{\pm\infty}
\varphi^{\pm}_{k,r}{_{\mid M_{\ul{d}}}z^{-r}}$
diagonalizable in the fixed point
basis and satisfying the equation
\begin{equation}
\label{1'} [x_k^{+}(z), x_k^{-}(w)]=\frac{1}{v-v^{-1}}
\{\delta(w/z)\varphi_k^{+}(w)-\delta(z/w)\varphi_k^{-}(z) \}
\end{equation}

We will show that equality (\ref{1'}) determine $\varphi_k^{\pm}(z)$
uniquely up to a particular choice of $\varphi_{i,0}^{\pm}$ (the
latter ambiguity is easily resolved by the formulas of
Theorem~\ref{brav} as explained below).
Let us further omit $\mid_{M_{\ul{d}}}$ for brevity. Next we will
check
\begin{equation}
\label{2'}
\varphi_k^{s}(z)\varphi_l^{s'}(w)=\varphi_l^{s'}(w)\varphi_k^{s}(z)
\end{equation}
\begin{equation}
\label{3'} (z-v^{\pm
a_{kl}}w)\varphi_l^s(z)x_k^{\pm}(w)=x_k^{\pm}(w)
\varphi_l^s(z)(v^{\pm a_{kl}}z-w)
\end{equation}

Finally by showing that $\varphi_k^{\pm}(z)=\psi_k^{\pm}(z)$ we will
get~(\ref{1}--\ref{3}) from~(\ref{1'}--\ref{3'}).

\medskip
 From Proposition~\ref{matrix_elements} one gets that $(v-v^{-1})[x_i^{+}(z),
 x_i^{-}(w)]$ is diagonalizable in the fixed point basis and moreover
 its eigenvalue at $\{[\widetilde{\ul{d}}]\}$ equals to $$\sum_{a,b\in \mathbb Z}{z^{-a}w^{-b}\chi_{i,a+b}},$$
where
$$\chi_{i,c}=-t_{i+1}^{-1}t_i^{-1}v^{d_{i+1}-d_{i-1}-1}(v^2-1)^{-1}\times$$$$\sum_{j\leq i}{s_{ij}\left(\prod_{j\neq k \leq i} {(1-s_{i,j}s_{i,k}^{-1})}^{-1}
 \prod_{j\neq k \leq i}{(1-v^2s_{i,k}s_{i,j}^{-1})}^{-1}
\prod_{k\leq i-1}{(1-s_{i,j}s_{i-1,k}^{-1})}\prod_{k\leq
i+1}{(1-v^2s_{i+1,k}s_{i,j}^{-1})}(s_{i,j}v^i)^c-\right.}$$$${\left.
v^2\prod_{j\neq k \leq i} {(1-s_{i,j}^{-1}s_{i,k})}^{-1}\prod_{j\neq
k \leq i}{(1-v^2s_{i,k}^{-1}s_{i,j})}^{-1} \prod_{k\leq
i-1}{(1-v^2s_{i,j}s_{i-1,k}^{-1})} \prod_{k\leq
i+1}{(1-s_{i+1,k}s_{i,j}^{-1})}(s_{i,j}v^{i+2})^c\right)}.$$

So as we want an equality $(v-v^{-1})[x_i^{+}(z),
x_i^{-}(w)]=\delta\left(\frac{z}{w}\right)\varphi_i^{+}(w)-\delta\left(\frac{w}{z}\right)\varphi_i^{-}(z)=$$$
\sum_{a,b|a+b>0}{z^{-a}w^{-b}\varphi_{i,a+b}^{+}}-\sum_{a,b|a+b<0}{z^{-a}w^{-b}\varphi_{i,a+b}^{-}}+\sum_{a,b|a+b=0}{z^{-a}w^{-b}(\varphi_{i,0}^{+}-\varphi_{i,0}^{-})}$$
to hold, we determine $\varphi_{i,s>0}^{+},
\varphi_{i,s<0}^{-},\varphi_{i,s=0}^{+}-\varphi_{i,s=0}^{-} $
uniquely as they are equal to the corresponding $\chi_{i,s}$.
Recalling results of~\cite{bf} we see that equality for
$\varphi_{i,0}^{+}-\varphi_{i,0}^{-}=\chi_{i,0}$ has a particular
solution
$\varphi_{i,0}^{+}=t_it_{i+1}^{-1}v^{d_{i+1}-2d_i+d_{i-1}-1},\varphi_{i,0}^{-}=t_i^{-1}t_{i+1}v^{-d_{i+1}+2d_i-d_{i-1}+1}$.
This determines all coefficients of the series $\varphi_i^{\pm}(z)$
(this particular choice of $\varphi_{i,0}^{\pm}$ is crucial for a
verification of $\varphi_i^\pm(z)=\psi_i^\pm(z)$).

 Since all operators $\varphi_{i,s}^{\pm}$ are
diagonalizable in the fixed point basis~(\ref{2'}) holds
automatically. So let us check~(\ref{3'}), i.e.
$$(z-v^{s' a_{kl}}w)\varphi_l^s(z)x_k^{s'}(w)=x_k^{s'}(w)
\varphi_l^s(z)(v^{s' a_{kl}}z-w).$$

\begin{proof}
We will check it for $k=l, s=+, s'=-$ as all other cases are
analogous (the case $k\ne l$ follows directly from~(\ref{5}) and the
construction of $\varphi_i^{\pm}(z)$).

 Now we are computing the matrix coefficients of both sides in the fixed
point basis at the pair
$[\widetilde{\ul{d}},\widetilde{\ul{d}}{}'=\widetilde{\ul{d}}+\delta_{i,p}].$
 Let us point out that
$f_{i,b+1}\mid_{[\widetilde{\ul{d}},\widetilde{\ul{d}}{}']}=f_{i,b}\mid_{[\widetilde{\ul{d}},\widetilde{\ul{d}}{}']}\cdot
s_{i,p}v^i$.
 And as $\varphi_{i,s\geq 0}^{+}$ are diagonalizable in the fixed point
basis we just need to verify that for any $a\geq 0$ we have:
$$(\varphi_{i,a+1}^{+}-v^{-2}s_{i,p}v^i\varphi_{i,a}^{+})\mid_{\widetilde{\ul{d}}+\delta_{i,p}}=(v^{-2}\varphi_{i,a+1}^{+}-s_{i,p}v^i\varphi_{i,a}^{+})\mid_{\widetilde{\ul{d}}}.$$
a) \emph{Case $a>0.$} Here we use the notations of Proposition
2.21,~\cite{bf}. Namely, define:
$$q:=v^2, s_j:=s_{ij}=t_j^2v^{-2d_{i,j}},
p_k:=s_{i-1,k}=t_k^2v^{-2d_{i-1,k}},
r_k:=s_{i+1,k}=t_k^2v^{-2d_{i+1,k}}.$$
 Then
$$\varphi_{i,a}^{+}\mid_{\widetilde{\ul{d}}}=P\prod_{j\leq
i} {s_j} \prod_{k\leq i-1} {p_k^{-1}} \left(\sum_{j\leq
i}{s_j^{-2}\prod_{k\leq i+1}{(s_j-qr_k)}\prod_{k\leq
i-1}{(p_k-s_j)}\prod_{j\ne k\leq
i}{\left((s_j-qs_k)^{-1}(s_k-s_j)^{-1}\right)}s_j^a} \right.-$$
$$\left.q\sum_{j\leq i}{s_j^{-2}\prod_{k\leq i+1}{(s_j-r_k)}\prod_{k\leq i-1}{(p_k-qs_j)}\prod_{j\ne k\leq
i}{\left((s_j-s_k)^{-1}(s_k-qs_j)^{-1}\right)}(qs_j)^a}\right).$$
$$\varphi_{i,a}^{+}\mid_{\widetilde{\ul{d}}+\delta_{i,p}}=P\prod_{j\leq
i} {s_j} \prod_{k\leq i-1} {p_k^{-1}}q^{-1} \left(\sum_{p\ne j\leq
i}{s_j^{-2}\prod_{k\leq i+1}{(s_j-qr_k)}\prod_{k\leq
i-1}{(p_k-s_j)}}\times \right.$$
$$\prod_{j,p\ne k\leq
i}{\left((s_j-qs_k)^{-1}(s_k-s_j)^{-1}\right)}(s_j-s_p)^{-1}(q^{-1}s_p-s_j)^{-1}s_j^a
-$$
$$q\sum_{p\ne j\leq i}{s_j^{-2}\prod_{k\leq i+1}{(s_j-r_k)}\prod_{k\leq i-1}{(p_k-qs_j)}}\times$$
$$\prod_{j,p\ne k\leq
i}{\left((s_j-s_k)^{-1}(s_k-qs_j)^{-1}\right)}(s_j-q^{-1}s_p)^{-1}(q^{-1}s_p-qs_j)^{-1}(qs_j)^a+$$
$$
s_p^{-2}q^2\prod_{k\leq i+1}{(q^{-1}s_p-qr_k)}\prod_{k\leq
i-1}{(p_k-q^{-1}s_p)}\prod_{p\ne k\leq
i}{\left((q^{-1}s_p-qs_k)^{-1}(s_k-q^{-1}s_p)\right)}(q^{-1}s_p)^a-$$
$$\left.qs_p^{-2}q^2\prod_{k\leq i+1}{(q^{-1}s_p-r_k)}\prod_{k\leq
i-1}{(p_k-s_p)}\prod_{p\ne k\leq
i}{\left((q^{-1}s_p-s_k)^{-1}(s_k-s_p)^{-1}\right)s_p^a}\right),$$
where $P=-t_{i+1}^{-1}t_iv^{d_{i+1}-d_{i-1}-1+ia}(v^2-1)^{-1}$.

\medskip

Hence:
$$(v^{-2}\varphi_{i,a+1}^{+}-s_pv^i\varphi_{i,a}^{+})\mid_{\widetilde{\ul{d}}}=Pv^i\prod_{j\leq
i} {s_j} \prod_{k\leq i-1} {p_k^{-1}}
 \left(\sum_{p\ne j\leq
i}{s_j^{-2}\prod_{k\leq i+1}{(s_j-qr_k)}\prod_{k\leq
i-1}{(p_k-s_j)}}\times\right.$$
$$\prod_{j\ne k\leq
i}{\left((s_j-qs_k)^{-1}(s_k-s_j)^{-1}\right)}(q^{-1}s_j-s_p)s_j^a-$$
$$q\sum_{p\ne j\leq i}{s_j^{-2}\prod_{k\leq i+1}{(s_j-r_k)}\prod_{k\leq i-1}{(p_k-qs_j)}\prod_{j\ne k\leq
i}{\left((s_j-s_k)^{-1}(s_k-qs_j)^{-1}\right)}(s_j-s_p)(qs_j)^a}+$$
$$\left.s_p^{-2}\prod_{k\leq i+1}{(s_p-qr_k)}\prod_{k\leq i-1}{(p_k-s_p)}\prod_{p\ne k\leq
i}{\left((s_p-qs_k)^{-1}(s_k-s_p)^{-1}\right)}(q^{-1}-1)s_p^{a+1}\right).
$$

$$(\varphi_{i,a+1}^{+}-v^{-2}s_pv^i\varphi_{i,a}^{+})\mid_{\widetilde{\ul{d}}+\delta_{i,p}}=Pv^i\prod_{j\leq
i} {s_j} \prod_{k\leq i-1} {p_k^{-1}}q^{-1} \left(\sum_{p\ne j\leq
i}{s_j^{-2}\prod_{k\leq i+1}{(s_j-qr_k)}\prod_{k\leq
i-1}{(p_k-s_j)}}\times \right.$$
$$\prod_{j,p\ne k\leq
i}{\left((s_j-qs_k)^{-1}(s_k-s_j)^{-1}\right)}(s_j-s_p)^{-1}(q^{-1}s_p-s_j)^{-1}(s_j-q^{-1}s_p)s_j^a
-$$
$$q\sum_{p\ne j\leq i}{s_j^{-2}\prod_{k\leq i+1}{(s_j-r_k)}\prod_{k\leq i-1}{(p_k-qs_j)}}\times$$
$$\prod_{j,p\ne k\leq
i}{\left((s_j-s_k)^{-1}(s_k-qs_j)^{-1}\right)}(s_j-q^{-1}s_p)^{-1}(q^{-1}s_p-qs_j)^{-1}(qs_j-q^{-1}s_p)(qs_j)^a-$$
$$\left.qs_p^{-2}q^2\prod_{k\leq i+1}{(q^{-1}s_p-r_k)}\prod_{k\leq
i-1}{(p_k-s_p)}\prod_{p\ne k\leq
i}{\left((q^{-1}s_p-s_k)^{-1}(s_k-s_p)^{-1}\right)(1-q^{-1})s_p^{a+1}}\right).$$

It is straightforward to check that these two expressions coincide.

\medskip
\noindent
b) \emph{Case $a=0$.} In this case, the same argument as
used in a) shows
$$(\chi_{i,1}-v^{-2}s_{i,p}v^i\chi_{i,0})\mid_{\widetilde{\ul{d}}+\delta_{i,p}}=(v^{-2}\chi_{i,1}-s_{i,p}v^i\chi_{i,0})\mid_{\widetilde{\ul{d}}}.$$
Since $\varphi_{i,0}^{+}=\chi_{i,0}+\varphi_{i,0}^{-},\
\varphi_{i,1}^{+}=\chi_{i,1}$,
it suffices to verify
$v^{-2}\varphi_{i,0}^{-}\mid_{\widetilde{\ul{d}}+\delta_{i,p}}=\varphi_{i,0}^{-}\mid_{\widetilde{\ul{d}}}$,
which follows directly from the formula
$\varphi_{i,0}^{-}\mid_{\widetilde{\ul{d}}}=t_i^{-1}t_{i+1}v^{-d_{i+1}+2d_i-d_{i-1}+1}$.
\end{proof}

\medskip

  Finally, we rewrite formulas for $\varphi_i^{\pm}(z)$.
According to~(\ref{3'}), for any $a>0$ we have:
$$(\varphi_{l,a+1}^{+}-v^{-a_{k,l}}t_p^2v^{-2d_{k,p}}v^k\varphi_{l,a}^{+})\mid_{\widetilde{\ul{d}}+\delta_{k,p}}=
(v^{-a_{k,l}}\varphi_{l,a+1}^{+}-t_p^2v^{-2d_{k,p}}v^k\varphi_{l,a}^{+})\mid_{\widetilde{\ul{d}}},$$

i.e.
$$\varphi_l^{+}(z)(1-t_p^2v^{-a_{k,l}-2d_{k,p}+k}z^{-1})\mid_{\widetilde{\ul{d}}+\delta_{k,p}}=
\varphi_l^{+}(z)(v^{-a_{k,l}}-t_p^2v^{-2d_{k,p}+k}z^{-1})\mid_{\widetilde{\ul{d}}}.$$

 This is especially interesting whenever $a_{k,l}\ne 0$ providing the
following equalities:
\begin{equation}
\label{2.1}
\frac{\varphi_l^{+}(z)\mid_{\widetilde{\ul{d}}+\delta_{l+1,p}}}{\varphi_l^{+}(z)\mid_{\widetilde{\ul{d}}}}=
v\frac{1-z^{-1}v^{l}t_p^2v^{-2d_{l+1,p}}}{1-z^{-1}v^{l+2}t_p^2v^{-2d_{l+1,p}}}
\end{equation}

\begin{equation}
\label{2.2}
\frac{\varphi_l^{+}(z)\mid_{\widetilde{\ul{d}}+\delta_{l-1,p}}}{\varphi_l^{+}(z)\mid_{\widetilde{\ul{d}}}}=
v\frac{1-z^{-1}v^{l-2}t_p^2v^{-2d_{l-1,p}}}{1-z^{-1}v^{l}t_p^2v^{-2d_{l-1,p}}}
\end{equation}

\begin{equation}
\label{2.3}
\frac{\varphi_l^{+}(z)\mid_{\widetilde{\ul{d}}+\delta_{l,p}}}{\varphi_l^{+}(z)\mid_{\widetilde{\ul{d}}}}=
v^{-2}\frac{1-z^{-1}v^{l+2}t_p^2v^{-2d_{l,p}}}{1-z^{-1}v^{l-2}t_p^2v^{-2d_{l,p}}}
\end{equation}

Let $\widetilde{\ul{d_0}}=(d_{i,j}=0|\forall\ i,j)$, then recalling
the definition of $\varphi_i^+(z)$ we get
$$\varphi_i^{+}(z)\mid_{\widetilde{\ul{d_0}}}=t_{i+1}^{-1}t_iv^{-1}-t_{i+1}^{-1}t_i^{-1}v^{-1}(v^2-1)^{-1}t_i^2\times$$
$$\sum_{a\geq 1}{\prod_{k\leq i-1}{(1-t_i^2t_k^{-2})^{-1}}\prod_{k\leq i-1}{(1-v^2t_k^2t_i^{-2})^{-1}}\prod_{k\leq i-1}{(1-t_i^2t_k^{-2})}
\prod_{k\leq i+1}{(1-v^2t_k^2t_i^{-2})}(t_i^2v^iz^{-1})^a }=$$
$$t_{i+1}^{-1}t_iv^{-1}-t_{i+1}^{-1}t_iv^{-1}(v^2-1)^{-1}(1-v^2)(1-t_{i+1}^2t_i^{-2}v^2)\frac{t_i^2v^iz^{-1}}{1-t_i^2v^iz^{-1}}=
t_{i+1}^{-1}t_iv^{-1}(1-t_{i+1}^2v^{i+2}z^{-1})(1-t_i^2v^iz^{-1})^{-1}.$$

So
\begin{equation}
\label{2.4}
\varphi_i^{+}(z)\mid_{\widetilde{\ul{d_0}}}=t_{i+1}^{-1}t_iv^{-1}(1-t_{i+1}^2v^{i+2}z^{-1})(1-t_i^2v^iz^{-1})^{-1}.
\end{equation}

The following formula is a direct consequence
of~(\ref{2.1}--\ref{2.4}):
\begin{equation}
\label{2.5}
\varphi_i^{+}(z)=t_{i+1}^{-1}t_iv^{d_{i+1}-2d_i+d_{i-1}-1}\left(a_{i+1}(zv^{-i-2})a_{i-1}(zv^{-i})a_i(zv^{-i-2})^{-1}a_i(zv^{-i})^{-1}\right)^{+},
\end{equation}

\begin{equation}
\label{2.6} a_j(z)\mid_{\widetilde{\ul{d}}}:=\prod_{p\leq
j}{(1-z^{-1}t_p^2v^{-2d_{j,p}})}.
\end{equation}

\noindent
 Comparing~(\ref{2.5}--\ref{2.6}) to
Proposition~\ref{matrix_elements}b), we get
$\varphi_i^{+}(z)=\psi_i^{+}(z)$. Analogously:
$\varphi_i^{-}(z)=\psi_i^{-}(z)$. \textbf{\textit{Theorem~\ref{var}
is proved. }  }

\section{Parabolic sheaves and quantum toroidal algebra}
\label{psay} In this section we generalize our previous results to
the affine setting.

\subsection{Parabolic sheaves}
\label{PS} We recall the setup of section~3 of~\cite{bf}. Let $\bX$
be another smooth projective curve of genus zero. We fix a
coordinate $y$ on $\bX$, and consider the action of $\BC^*$ on $\bX$
such that $c(y)=c^{-2}y$. We have
$\bX^{\BC^*}=\{0_\bX,\infty_\bX\}$. Let $\bS$ denote the product
surface $\bC\times\bX$. Let $\bD_\infty$ denote the divisor
$\bC\times\infty_\bX\cup\infty_\bC\times\bX$. Let $\bD_0$ denote the
divisor $\bC\times0_\bX$.

Given an $n$-tuple of nonnegative integers
$\ul{d}=(d_0,\ldots,d_{n-1})$, a {\em parabolic sheaf $\CF_\bullet$
of degree $\ul{d}$} is an infinite flag of torsion free coherent
sheaves of rank $n$ on $\bS:\
\ldots\subset\CF_{-1}\subset\CF_0\subset\CF_1\subset\ldots\ $ such
that:

(a) $\CF_{k+n}=\CF_k(\bD_0)$ for any $k$;

(b) $ch_1(\CF_k)=k[\bD_0]$ for any $k$: the first Chern classes
are proportional to the fundamental class of $\bD_0$;

(c) $ch_2(\CF_k)=d_i$ for $i\equiv k\pmod{n}$;

(d) $\CF_0$ is locally free at $\bD_\infty$ and trivialized at
$\bD_\infty:\ \CF_0|_{\bD_\infty}=W\otimes\CO_{\bD_\infty}$;

(e) For $-n\leq k\leq0$ the sheaf $\CF_k$ is locally free at
$\bD_\infty$, and the quotient sheaves $\CF_k/\CF_{-n},\
\CF_0/\CF_k$ (both supported at $\bD_0=\bC\times0_\bX\subset\bS$)
are both locally free at the point $\infty_\bC\times0_\bX$;
moreover, the local sections of $\CF_k|_{\infty_\bC\times \bX}$ are
those sections of $\CF_0|_{\infty_\bC\times \bX}=W\otimes\CO_\bX$
which take value in $\langle w_1,\ldots,w_{n+k}\rangle\subset W$ at
$0_\bX\in \bX$.

\medskip

The fine moduli space $\CP_{\ul{d}}$ of degree $\ul{d}$ parabolic
sheaves exists and is a smooth connected quasiprojective variety of
dimension $2d_0+\cdots+2d_{n-1}$.

\subsection{Fixed points}
\label{fp} The group $\widetilde{T}\times\BC^*\times\BC^*$ acts
naturally on $\CP_{\ul{d}}$, and its fixed point set is finite. In
order to describe it, we recall the well known description of the
fixed point set of a $\BC^*\times\BC^*$-action on the Hilbert scheme
of points of $(\bC-\infty_\bC)\times(\bX-\infty_\bX)\cong \CC^2$.
The latter fixed points are parameterized by the Young diagrams, and
for a diagram $\lambda=(\lambda_0\geq\lambda_1\geq\ldots)$ (where
$\lambda_N=0$ for $N\gg0$) the corresponding fixed point is the
ideal $J_\lambda=\BC[z]\cdot (\BC y^0z^{\lambda_0}\oplus \BC
y^1z^{\lambda_1}\oplus\cdots)$. We will view $J_\lambda$ as an ideal
in $\CO_{\bC\times\bX}$ coinciding with $\CO_{\bC\times\bX}$ in a
neighborhood of infinity.

\emph{Notation:} We say $\lambda\supset\mu$ if $\lambda_i\geq\mu_i$
for any $i\geq0$. We say $\lambda\widetilde\supset\mu$ if
$\lambda_i\geq\mu_{i+1}$ for any $i\geq0$.

\noindent
 Consider a collection $\blambda=(\lambda^{kl})_{1\leq k,l\leq n}$ of
Young diagrams satisfying the following conditions:
\begin{equation}
\label{pool}
\lambda^{11}\supset\lambda^{21}\supset\cdots\supset\lambda^{n1}
\widetilde\supset\lambda^{11};\
\lambda^{22}\supset\lambda^{32}\supset\cdots\supset\lambda^{12}
\widetilde\supset\lambda^{22};\ \ldots;\
\lambda^{nn}\supset\lambda^{1n}\supset\cdots\supset\lambda^{n-1,n}
\widetilde\supset\lambda^{nn}\\
\end{equation}
We set $d_k(\blambda)=\sum_{l=1}^n|\lambda^{kl}|$, and
$\ul{d}(\blambda)=(d_0(\blambda):=d_n(\blambda),\ldots,d_{n-1}(\blambda))$.

Given such a collection $\blambda$ we define a parabolic sheaf
$\CF_\bullet=\CF_\bullet(\blambda)$, or just $\blambda$ by an abuse of
notation, as follows: for $1\leq k\leq n$ we set
\begin{equation}
\label{swim}
\CF_{k-n}=\bigoplus_{1\leq l\leq k} J_{\lambda^{kl}}w_l\oplus
\bigoplus_{k<l\leq n}J_{\lambda^{kl}}(-\bD_0)w_l
\end{equation}

 The following result is Lemma 3.3 of~\cite{ffnr}:
\begin{lem}
\label{evi}
The correspondence $\blambda\mapsto\CF_\bullet(\blambda)$ is a bijection
between the set of collections $\blambda$ satisfying~(\ref{pool})
such that $\ul{d}(\blambda)=\ul{d}$,
and the set of $\widetilde{T}\times\BC^*\times\BC^*$-fixed points in
$\CP_{\ul{d}}$.
\end{lem}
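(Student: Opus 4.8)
The plan is to establish the bijection by carefully unwinding the defining conditions~(a)--(e) of a parabolic sheaf and matching them against the combinatorial conditions~(\ref{pool}) on a collection $\blambda$. First I would verify that the assignment $\blambda\mapsto\CF_\bullet(\blambda)$ given by~(\ref{swim}) actually lands in $\CP_{\ul{d}}$, i.e.\ that the sheaves defined there satisfy conditions~(a)--(e). Condition~(a), periodicity $\CF_{k+n}=\CF_k(\bD_0)$, should follow immediately from the shape of~(\ref{swim}) together with the identity $J_{\lambda}(-\bD_0)(\bD_0)=J_{\lambda}$; condition~(d), the trivialization at $\bD_\infty$, holds because each ideal $J_\lambda$ coincides with the structure sheaf near infinity; and conditions~(b),(c) are a Chern-class bookkeeping: $ch_1$ picks up exactly the $(-\bD_0)$ twists on the last $n-k$ summands, giving $(k-n)[\bD_0]$ for $\CF_{k-n}$, and $ch_2(\CF_{k-n})=\sum_{l=1}^n|\lambda^{kl}|=d_k(\blambda)$ since $ch_2(J_\lambda)=|\lambda|$ and the $(-\bD_0)$-twist along a curve does not affect the length of the colength-$|\lambda|$ quotient supported away from $\bD_0$. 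The nesting inclusions $\CF_{k-n}\subset\CF_{k+1-n}$ forced by~(b)/(d) are precisely the containments $J_{\lambda^{kl}}\supset J_{\lambda^{k+1,l}}$ and $J_{\lambda^{kl}}(-\bD_0)\supset J_{\lambda^{k+1,l}}(-\bD_0)$ for the relevant ranges of $l$, together with the ``wrap-around'' inclusion $\CF_0=\CF_n(-\bD_0)\supset\CF_1(-\bD_0)$ contributing the $\widetilde\supset$ relations; here one uses that $J_\mu(-\bD_0)\subset J_\lambda\Leftrightarrow\lambda\widetilde\supset\mu$, which is a direct check on the monomial bases $\BC y^iz^{\lambda_i}$.

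Second I would establish that the construction is well-defined on, and sees, all $\widetilde{T}\times\BC^*\times\BC^*$-fixed points. A fixed point of the torus is a parabolic sheaf each of whose members $\CF_k$ is a torus-invariant subsheaf of $W\otimes\CO_{\bS}(\ast\bD_0)$; decomposing by the $\widetilde T$-weight spaces $w_1,\dots,w_n$ and using that a $\BC^*\times\BC^*$-invariant rank-one torsion-free subsheaf of $\CO_{\CC^2}$, trivial near infinity, is exactly a monomial ideal $J_\lambda$ (the classical description of Hilbert-scheme fixed points recalled in~\ref{fp}), one recovers that $\CF_{k-n}$ must have the form~(\ref{swim}) for \emph{some} collection of Young diagrams $\lambda^{kl}$ — the $(-\bD_0)$-twists on the summands $l>k$ being dictated by condition~(e), which says the sections of $\CF_{k-n}|_{\infty_\bC\times\bX}$ land in $\langle w_1,\dots,w_k\rangle$ at $0_\bX$. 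Then the inclusion chain~(b)/(d) plus periodicity~(a) forces exactly the relations~(\ref{pool}), and conversely any $\blambda$ satisfying~(\ref{pool}) yields, via the first step, an honest fixed point; injectivity is clear since the $\lambda^{kl}$ are read off as the colengths of the weight components. Finally, a degree count shows $d_k(\blambda)=ch_2(\CF_{k-n})$ matches the indexing, so the bijection is compatible with the decomposition $\CP_{\ul d}$.

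The main obstacle, I expect, is the careful handling of condition~(e) and the ``affine wrap-around'' at $k=0$: one must check that the twist pattern ``$J_{\lambda^{kl}}$ for $l\le k$, $J_{\lambda^{kl}}(-\bD_0)$ for $l>k$'' in~(\ref{swim}) is the \emph{unique} torus-invariant possibility consistent with both the trivialization at $\bD_\infty$ (condition~(d), so no positive twists) and the vanishing-order condition at $\infty_\bC\times 0_\bX$ (condition~(e), forcing exactly one negative twist for $l>k$), and that local freeness of $\CF_k/\CF_{-n}$ and $\CF_0/\CF_k$ at $\infty_\bC\times0_\bX$ holds automatically for the monomial sheaves~(\ref{swim}). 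One also has to be scrupulous that the wrap-around inclusion $\CF_0\supset\CF_1(-\bD_0)$, i.e.\ the comparison of~(\ref{swim}) for $k=n$ twisted by $-\bD_0$ against $k=1$, produces the $\widetilde\supset$ relations $\lambda^{n1}\widetilde\supset\lambda^{11}$, $\lambda^{12}\widetilde\supset\lambda^{22}$, etc., with the correct index shift — this is where the half-integral ``shift by one row'' in the definition of $\widetilde\supset$ enters and is easy to get off by one. Once these local verifications are in place, the correspondence is a tautological bijection and the lemma follows; this is exactly the content of Lemma~3.3 of~\cite{ffnr}, and I would cite that computation for the routine monomial bookkeeping while spelling out the structure above.
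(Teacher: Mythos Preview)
The paper does not give its own proof of this lemma; it simply quotes it as Lemma~3.3 of~\cite{ffnr}. Your sketch is the natural argument behind that citation and you even reference the same source, so in spirit you agree with the paper.

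That said, a few of your bookkeeping claims are off and worth fixing before you call this airtight. First, the ideal inclusions go the other way: from $\CF_{k-n}\subset\CF_{k+1-n}$ one gets $J_{\lambda^{kl}}\subset J_{\lambda^{k+1,l}}$ in the $w_l$-component, which (since larger partition means smaller ideal) is exactly $\lambda^{kl}\supset\lambda^{k+1,l}$. Second, the correct equivalence is $J_\mu(-\bD_0)\subset J_\lambda\Longleftrightarrow\mu\,\widetilde\supset\,\lambda$, not $\lambda\,\widetilde\supset\,\mu$: multiplying $J_\mu$ by $y$ shifts the monomial basis to $y^{i+1}z^{\mu_i}$, and membership in $J_\lambda$ forces $\mu_i\ge\lambda_{i+1}$. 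Third, the $\widetilde\supset$ relations do not all come from the single wrap-around $\CF_0\subset\CF_1$. For each $2\le l\le n$, the relation $\lambda^{l-1,l}\,\widetilde\supset\,\lambda^{ll}$ already appears in the ordinary step $\CF_{l-1-n}\subset\CF_{l-n}$, namely in the $w_l$-component where the summand passes from $J_{\lambda^{l-1,l}}(-\bD_0)$ to $J_{\lambda^{ll}}$; only the relation $\lambda^{n1}\,\widetilde\supset\,\lambda^{11}$ comes from the genuine wrap-around $\CF_0\subset\CF_1=\CF_{1-n}(\bD_0)$. You anticipated exactly this kind of off-by-one hazard, and indeed it bit you; once these are straightened out your outline is correct.
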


\subsection{Another realization of parabolic sheaves}
\label{realization} We will now introduce a different realization of
parabolic sheaves, and another parametrization of the fixed point
set which is very closely related to this new realization. This
construction originates from the work of Biswas~\cite{bi}. Let
$\sigma:\bC\times \bX\rightarrow \bC\times \bX$ denote the map
$\sigma(z,y)=(z,y^n)$, and let $G=\BZ/n\BZ$. Then $G$ acts on
$\bC\times \bX$ by multiplying the coordinate on $\bX$ with the
$n-$th roots of unity.

A parabolic sheaf $\CF_\bullet$ is completely determined by the flag of sheaves
$$
\CF_0(-\bD_0)\subset \CF_{-n+1}\subset\cdots\subset \CF_0,
$$
satisfying conditions ~\ref{PS}(a--e). To $\CF_\bullet$ we can
associate a single, $G$-invariant sheaf $\tilde \CF$ on $\bC\times
\bX$:
$$
\tilde
\CF=\sigma^*\CF_{-n+1}+\sigma^*\CF_{-n+2}(-\bD_0)+\cdots+\sigma^*\CF_0(-(n-1)\bD_0).
$$
This sheaf will have to satisfy certain numeric and framing
conditions that mimick conditions~\ref{PS}(b)--(e) (they are
explicitly written in~\cite{ffnr}). Conversely, any $G$-invariant
sheaf $\tilde \CF$ that satisfies those numeric and framing
conditions will
determine a unique parabolic sheaf. \\

If $\CF_\bullet$ is a $\widetilde{T}\times\BC^*\times\BC^*$ fixed
parabolic sheaf corresponding to a collection $\blambda$ as in the
previous section, then we have
\begin{equation} \label{explicit tilde fixed points}
\tilde \CF = \bigoplus_{l=1}^n J_{\lambda^l}(-(l-1)\bD_0)w_l,
\end{equation}
where $(\lambda^1,\ldots,\lambda^n)$ is a collection of partitions,
given by
\begin{equation} \label{relation lambdas}
\lambda^l_{ni-n\lfloor \frac {k-l}n \rfloor +k-l}=\lambda^{kl}_i.
\end{equation}
Here $\lfloor \frac {k-l}n \rfloor$ stands for the maximal integer smaller
than or equal to $\frac {k-l}n$.

For $j\in \BZ$, let $(j \textrm{ mod }n)$ denote an element of
$\{1,\ldots,n\}$ which is congruent to $j$ modulo $n$. For $i\geq
j\in \BZ$, we define
\begin{equation} \label{relation d lambda}
d_{ij}=\lambda^{j \textrm{ mod } n}_{i-j}
\end{equation}
This construction provides a collection
$(d_{ij})=\widetilde{\ul{d}}=\widetilde{\ul{d}}(\blambda)$ of
non-negative integers with the properties that
\begin{equation}
\label{fei}
d_{kj}\geq d_{ij}\ \forall i\geq k\geq j;\
d_{i+n,j+n}=d_{ij}\ \forall i\geq j;\
d_{ij}=0\ \operatorname{for}\ i-j\gg0.
\end{equation}
For $1\leq k\leq n$, we have
$$
d_k(\widetilde{\ul{d}})=\sum_{j\leq k}d_{kj}=\sum_{l=1}^n
\sum_{i\leq \lfloor \frac {k-l}n\rfloor } d_{k,l+ni}= \sum_{l=1}^n
\sum_{i\geq 0} \lambda_{ni-n\lfloor \frac {k-l}n\rfloor
+k-l}^l=\sum_{l=1}^{n}\sum_{i\geq 0} \lambda^{kl}_i=d_k(\blambda).
$$
Summarizing the above discussion, we have:
\begin{lem}
\label{dent}
The correspondence $\blambda\mapsto\widetilde{\ul{d}}(\blambda)$ is a
bijection between the set of collections $\blambda$ satisfying~(\ref{pool}),
and the set $D$ of collections $\widetilde{\ul{d}}$ satisfying~(\ref{fei}).
We have $\ul{d}(\blambda)=\ul{d}(\widetilde{\ul{d}}(\blambda))$.
\end{lem}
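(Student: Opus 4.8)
The statement is purely combinatorial, so the plan is to view both sides of the claimed bijection as two encodings of the same datum --- an $n$-tuple of ordinary Young diagrams --- and to match condition~(\ref{pool}) with condition~(\ref{fei}) under that identification. First I would unravel~(\ref{relation lambdas}). For integers $k,l$ put $r=(k-l)\bmod n\in\{0,\dots,n-1\}$; since $k-l=n\lfloor\tfrac{k-l}{n}\rfloor+r$, the index $ni-n\lfloor\tfrac{k-l}{n}\rfloor+(k-l)$ equals $ni+r$, so~(\ref{relation lambdas}) simply says $\lambda^l_{ni+r}=\lambda^{kl}_i$. In other words $\lambda^l=(\lambda^l_0,\lambda^l_1,\dots)$ is the interleaving of the $n$ sequences $\lambda^{kl}$, $1\le k\le n$: its entry in position $ni+r$ is $\lambda^{kl}_i$, where $k$ is the representative of $l+r$ in $\{1,\dots,n\}$. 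As $k$ runs over $\{1,\dots,n\}$ with $l$ fixed, $r$ runs bijectively over $\{0,\dots,n-1\}$, so $\blambda\mapsto(\lambda^1,\dots,\lambda^n)$ is a bijection from collections of eventually-zero sequences of nonnegative integers indexed by pairs $(k,l)$, $1\le k,l\le n$, onto $n$-tuples of such sequences, with de-interleaving as inverse.

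Next I would match the order relations. Reading positions $ni,ni+1,\dots,ni+n-1,n(i+1)$ of $\lambda^l$ one meets $\lambda^{ll}_i,\lambda^{l+1,l}_i,\dots,\lambda^{l-1,l}_i,\lambda^{ll}_{i+1}$, the first index taken modulo $n$ in $\{1,\dots,n\}$. Hence $\lambda^l$ is weakly decreasing precisely when $\lambda^{ll}_i\ge\lambda^{l+1,l}_i\ge\cdots\ge\lambda^{l-1,l}_i\ge\lambda^{ll}_{i+1}$ for every $i\ge0$: the inequalities for fixed $i$ are exactly $\lambda^{ll}\supset\lambda^{l+1,l}\supset\cdots\supset\lambda^{l-1,l}$, and the cross-block inequality $\lambda^{l-1,l}_i\ge\lambda^{ll}_{i+1}$ for all $i$ is exactly $\lambda^{l-1,l}\widetilde\supset\lambda^{ll}$; running over $l=1,\dots,n$ these are precisely the chains of~(\ref{pool}). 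Therefore $\blambda$ satisfies~(\ref{pool}) if and only if every $\lambda^l$ is an honest Young diagram, and under the previous paragraph such $\blambda$ correspond bijectively to $n$-tuples of Young diagrams.

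Then I would pass from $(\lambda^1,\dots,\lambda^n)$ to $\widetilde{\ul{d}}$ via~(\ref{relation d lambda}), $d_{ij}=\lambda^{(j\bmod n)}_{i-j}$ for $i\ge j\in\ZZ$, and check it lands in $D$: monotonicity $d_{kj}\ge d_{ij}$ for $i\ge k\ge j$ is monotonicity of the single partition $\lambda^{(j\bmod n)}$; $d_{i+n,j+n}=d_{ij}$ holds because the right-hand side depends only on $j\bmod n$ and $i-j$; and $d_{ij}=0$ for $i-j\gg0$ holds uniformly since there are only $n$ partitions, each eventually zero. Conversely, from $\widetilde{\ul{d}}\in D$ one sets $\lambda^l_m:=d_{l+m,l}$ for $m\ge0$, $1\le l\le n$; the relations~(\ref{fei}) make each $\lambda^l$ a Young diagram, and periodicity recovers all $d_{ij}$ from $(\lambda^1,\dots,\lambda^n)$, so~(\ref{relation d lambda}) is a bijection onto $D$, inverse to the construction just given. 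Composing with the bijection of the second paragraph yields the bijection $\blambda\mapsto\widetilde{\ul{d}}(\blambda)$ of the lemma, and the equality $\ul{d}(\blambda)=\ul{d}(\widetilde{\ul{d}}(\blambda))$ is exactly the displayed chain of identities $d_k(\widetilde{\ul{d}})=\dots=d_k(\blambda)$ proved immediately before the statement.

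The only delicate point is the index bookkeeping in the first two steps: one must verify that the floor in~(\ref{relation lambdas}) produces exactly the residue $r=(k-l)\bmod n$, and that the wrap-around relations $\widetilde\supset$ closing each chain of~(\ref{pool}) correspond precisely to monotonicity of $\lambda^l$ across the block boundaries $ni+n-1\mapsto ni+n$. Once this dictionary is pinned down everything else is formal repackaging, and the degree identity is already in hand.
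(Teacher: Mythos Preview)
Your argument is correct and follows exactly the route the paper has in mind: the paper introduces the intermediate $n$-tuple $(\lambda^1,\dots,\lambda^n)$ via~(\ref{relation lambdas}) and~(\ref{relation d lambda}), and then states the lemma with the phrase ``Summarizing the above discussion, we have'', without spelling out the verification. You have simply supplied the details of that summary --- the interleaving interpretation of~(\ref{relation lambdas}), the identification of~(\ref{pool}) with monotonicity of each $\lambda^l$, and the dictionary between $(\lambda^1,\dots,\lambda^n)$ and $D$ --- and invoked the displayed degree computation for the last claim, so there is nothing to add.
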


By virtue of Lemmas~\ref{evi} and~\ref{dent} we will parameterize
and sometimes denote the $\widetilde{T}\times\BC^*\times\BC^*$-fixed
points in $\CP_{\ul{d}}$ by collections $\widetilde{\ul{d}}$ such
that $\ul{d}=\ul{d}(\widetilde{\ul{d}})$.

\medskip
 \emph{Notation:} In what follows, given a collection
 $\widetilde{\ul{d}}$ as above we will denote by
 $\widetilde{\ul{d}}+\delta_{i,j}$ the collection
 $\widetilde{\ul{d}}{}'$, such that
 $\widetilde{\ul{d}}{}'_{i+ns,j+ns}=\widetilde{\ul{d}}_{i,j}+1\ (\forall s\in \BZ)$,
 while $\widetilde{\ul{d}}{}'_{p.q}=\widetilde{\ul{d}}_{p,q}$ for
 all other $(p,q)$.

\subsection{Correspondences}
If the collections $\ul{d}$ and $\ul{d}'$ differ at the only place
$i\in I:=\BZ/n\BZ$, and $d'_i=d_i+1$, then we consider the
correspondence
$\sE_{\ul{d},i}\subset\CP_{\ul{d}}\times\CP_{\ul{d}'}$ formed by the
pairs $(\CF_\bullet,\CF'_\bullet)$ such that for $j\not\equiv
i\pmod{n}$ we have $\CF_j=\CF'_j$, and for $j\equiv i\pmod{n}$ we
have $\CF'_j\subset\CF_j$. It is a smooth quasiprojective algebraic
variety of dimension $2\sum_{i\in I}d_i+1$.

We denote by $\bp$ (resp. $\bq$) the natural projection
$\sE_{\ul{d},i}\to\CP_{\ul{d}}$ (resp.
$\sE_{\ul{d},i}\to\CP_{\ul{d}'}$). For $j\equiv i\pmod{n}$ the
correspondence $\sE_{\ul{d},i}$ is equipped with a natural line
bundle $\sL_j$ whose fiber at $(\CF_\bullet,\CF'_\bullet)$ equals
$\Gamma(\bC,\CF_j/\CF'_j)$. Finally, we have a transposed
correspondence
$^\sT\sE_{\ul{d},i}\subset\CP_{\ul{d}'}\times\CP_{\ul{d}}$.

\subsection{Direct sum of equivariant $K$-groups}
We denote by ${}'V$ the direct sum of equivariant (complexified)
$K$-groups:
$${}'V=\oplus_{\ul{d}}K^{\widetilde{T}\times\BC^*\times\BC^*}
(\CP_{\ul{d}}).$$ It is a module over
$K^{\widetilde{T}\times\BC^*\times\BC^*}(pt)
=\BC[\widetilde{T}\times\BC^*\times\BC^*]=\BC[x_1,\ldots,x_n,v,u]$.
Here $u$ corresponds to a character $(x_1,\ldots,x_n, v,u)\mapsto
u$. We define $$V=\
{}'V\otimes_{K^{\widetilde{T}\times\BC^*\times\BC^*}(pt)}
\on{Frac}(K^{\widetilde{T}\times\BC^*\times\BC^*}(pt)).$$
 It is graded by $V=\oplus_{\ul{d}} V_{\ul{d}},\
V_{\ul{d}}=K^{\widetilde{T}\times\BC^*\times\BC^*}(\CP_{\ul{d}})
\otimes_{K^{\widetilde{T}\times\BC^*\times\BC^*}(pt)}
\on{Frac}(K^{\widetilde{T}\times\BC^*\times\BC^*}(pt))$.


\subsection{Action of a quantum affine group on $V$}
\label{aff op} The grading and the
correspondences $^\sT\sE_{\ul{d},i},\sE_{\ul{d},i}$ give rise to the
following operators on $V$
(note that though $\bp$ is not proper, $\bp_*$ is well defined on
the localized equivariant $K$-theory due to the finiteness of the
fixed point set of $\widetilde{T}\times\BC^*\times\BC^*$):

\begin{equation}
\label{3.1}
\fk_i=t_{i+1}^{-1}t_iu^{-\delta_{i,n}}v^{-2d_i+d_{i-1}+d_{i+1}-1}:\
V_{\ul{d}}\to V_{\ul{d}}
\end{equation}
\begin{equation}
\label{3.2} \fe_i=t_{i+1}^{-1}v^{d_{i+1}-d_i-i+1}\bp_*\bq^*:\
V_{\ul{d}}\to V_{\ul{d}-i}
\end{equation}
\begin{equation}
\label{3.3}
\ff_i=-t_i^{-1}u^{-\delta_{i,n}}v^{d_i-d_{i-1}+i}\bq_*(L_{i-n}\otimes\bp^*):\
V_{\ul{d}}\to V_{\ul{d}+i}
\end{equation}

\medskip
According to the Conjecture 3.7 of~\cite{bf} the following theorem
holds:\footnote{\ Actually, (\ref{3.1}--\ref{3.3}) differ from
formulas in~\cite{bf} by a slight rescaling. We prefer those, since
they are simpler.}
\begin{thm}
\label{braval} For $n>2$, these operators $\fk_i, \fe_i, \ff_i \
(1\leq i\leq n)$ satisfy the relations in $U_v(\widehat{\fsl_n})$,
i.e. they give rise to an action of a quantum affine group
$U_v(\widehat{\fsl_n})$ on $V$.
\end{thm}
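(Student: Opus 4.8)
The plan is to mimic the strategy used for Theorem~\ref{var}: reduce all relations of $U_v(\widehat{\fsl_n})$ in the Serre presentation to finitely many identities between matrix coefficients in the fixed point basis $\{[\widetilde{\ul d}]\}$, and then verify those identities by direct computation. The starting point is the observation that the combinatorics of the affine fixed point set, packaged via Lemma~\ref{dent} as collections $\widetilde{\ul d}=(d_{ij})$ subject to~(\ref{fei}), is ``locally'' the same as in the finite case: near a given fixed point $\widetilde{\ul d}$ the correspondences $\sE_{\ul d,i}$ for $i\in\BZ/n\BZ$ attach or remove a box in the $j$-th row of the relevant diagram, exactly as $\sE_{\ul d,i}$ did for $i\in\{1,\ldots,n-1\}$ in Section~\ref{techn}. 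Concretely, the first step is to record the affine analogue of Theorem~\ref{feigin}(b)/Proposition~\ref{matrix_elements}(a): compute the matrix coefficients of $\fe_i,\ff_i$ of~(\ref{3.2}--\ref{3.3}) at a pair $[\widetilde{\ul d},\widetilde{\ul d}\pm\delta_{i,j}]$. Because of the periodicity $\widetilde{\ul d}{}'_{i+ns,j+ns}=\widetilde{\ul d}_{i,j}+1$ built into the affine $\delta_{i,j}$, the tangent-space weight computation at such a fixed point still produces a finite product over $k\le i$ and $k\le i\pm 1$ of factors $(1-s_{i,j}s_{i,k}^{-1})$ etc., now with $s_{i,j}=t_{(j\bmod n)}^2 v^{-2d_{ij}}$ and with a power of $u$ appearing from the line bundle $L_{i-n}$ when $i\equiv n$. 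The point is that these matrix coefficients have exactly the same \emph{shape} as in Proposition~\ref{matrix_elements}, with only the index range and an extra $u^{\pm\delta_{i,n}}$ twist changed.

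\textbf{Key steps.} Granting that, I would proceed relation by relation. First, the torus relations~(\ref{quantgl1}--\ref{quantgl2}) (the $\fk_i$ commute, and the $\fk_i$-conjugation rules on $\fe_j,\ff_j$) are immediate from~(\ref{3.1}--\ref{3.3}) together with the bookkeeping $d_i(\widetilde{\ul d}\pm\delta_{j,\cdot})=d_i\pm\delta_{ij}$ (read cyclically), just as in Theorem~\ref{brav}; here one uses the affine Cartan matrix $A^{(1)}_{n-1}$, whose entries appear as the exponents of $v$. Second, the Serre-type relations~(\ref{quantgl4}) for $|i-j|>1$ in the affine Dynkin diagram reduce to the statement that the two relevant correspondences act on disjoint rows, so the operators literally commute on fixed points; this is verbatim the argument for~(\ref{5}) and the easy part of~(\ref{6}) in Section~\ref{techn}. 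Third, the $[\fe_i,\ff_j]$-relation~(\ref{quantgl3}): for $i\ne j$ adjacent in the affine diagram this is the computation done for~(\ref{3}) with $k\ne l$, where one checks that $\fe_i\ff_j$ and $\ff_j\fe_i$ have equal matrix coefficients at $[\widetilde{\ul d},\widetilde{\ul d}-\delta_{i,j_1}+\delta_{j,j_2}]$ because the two bracketed factors merely get reordered; for $i=j$ one checks that $[\fe_i,\ff_i]$ is diagonal in the fixed point basis with eigenvalue $(\fk_i-\fk_i^{-1})/(v-v^{-1})$, which amounts to the telescoping/partial-fraction identity already encountered when constructing $\varphi^\pm_i(z)$ and matching it with $\psi^\pm_i(z)$ — its $z,w\to$ constant specialization. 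Fourth, the quantum Serre relations~(\ref{quantgl5}), i.e.\ $[\fe_i,[\fe_i,\fe_{i\pm1}]_v]_v=0$ and likewise for $\ff$: for each affine edge $i\leftrightarrow i\pm1$, verify at pairs $[\widetilde{\ul d},\widetilde{\ul d}+\delta_{i,j_1}+\delta_{i,j_2}+\delta_{i\pm1,j_3}]$ (with $j_1\ne j_2$) and at $[\widetilde{\ul d},\widetilde{\ul d}+2\delta_{i,j_1}+\delta_{i\pm1,j_3}]$; the first is shown to be antisymmetric under $j_1\leftrightarrow j_2$ and the second to vanish, exactly as the $r=0$ specialization of the computation for~(\ref{6}). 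The hypothesis $n>2$ is exactly what guarantees that in the affine diagram the vertices $i$ and $i+1$ are distinct and the two arrows from $i$ go to genuinely different vertices $i-1\ne i+1$, so that the finite-type local pictures apply without the degenerate $A^{(1)}_1$ wrap-around.

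\textbf{Main obstacle.} The conceptual reduction above is routine once the fixed-point combinatorics is in place; the genuine work, and the step I expect to be the main obstacle, is the first one: producing the explicit affine matrix coefficients of $\fe_i,\ff_i$ in~(\ref{3.2}--\ref{3.3}), i.e.\ the affine counterpart of Theorem~\ref{feigin}(b). One must carefully identify the $\widetilde{T}\times\BC^*\times\BC^*$-character of the tangent space to $\sE_{\ul d,i}$ and of the line bundle $L_{i-n}$ at a fixed pair, track where the extra torus parameter $u$ (from the loop rotation on $\bX$, entering via $\bD_0$) enters — it does so precisely when $i\equiv n$, producing the $u^{-\delta_{i,n}}$ in~(\ref{3.1}),~(\ref{3.3}) — and confirm via the localization/Atiyah–Bott type formula that the resulting matrix coefficient is the finite-type expression of Proposition~\ref{matrix_elements}(a) with $s_{i,j}=t_{(j\bmod n)}^2v^{-2d_{ij}}$ and the appropriate $u$-twist. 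This is where one should invoke, or re-derive in the $K$-theoretic setting, the tangent-space computation of~\cite{ffnr} for the affine Laumon spaces. Once that formula is established, every relation of $U_v(\widehat{\fsl_n})$ becomes one of the polynomial-in-$s_{i,j}$ identities already verified in Section~\ref{techn} (now with $r=0$, i.e.\ without the $(s_{i,j}v^{\ast})^r$ factors), and the theorem follows; in particular, no new identity needs to be checked beyond those in the proof of Theorem~\ref{var}.
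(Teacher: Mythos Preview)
Your plan is correct and coincides with how the paper handles this result, with one caveat about the paper's treatment and one small correction to your description.

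The paper does not give a standalone proof of Theorem~\ref{braval}; it is stated with a reference to Conjecture~3.7 of~\cite{bf}, and the crucial identity $[\fe_i,\ff_i]=(\fk_i-\fk_i^{-1})/(v-v^{-1})$ is later (in the proof of Theorem~\ref{varaf}) said to reduce to the equality from the proof of Proposition~2.21 of~\cite{bf}. All other relations are exactly the $r=0$ specialization of the computations in Section~\ref{techn}, carried out with the affine weights $p_{i,j}$ in place of $s_{i,j}$; the needed tangent characters and matrix coefficients are supplied by Proposition~\ref{character to correspondence} (imported from~\cite{ffnr}) and Proposition~\ref{analog}. That is precisely your plan, so your reduction to ``the finite-type identities already verified, specialized to $r=0$'' is the paper's own argument.

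One correction: the parameter $u$ does not enter only through the line bundle when $i\equiv n$. It sits inside the fixed-point weights themselves, $p_{i,j}=t_{j\pmod n}^2v^{-2d_{ij}}u^{2\lceil(j-n)/n\rceil}$, so every $j\notin\{1,\ldots,n\}$ contributes a power of $u$, in addition to the overall $u^{-\delta_{i,n}}$ in~(\ref{3.1}) and~(\ref{3.3}). This is what makes the relations across the affine edge $n\leftrightarrow 1$ come out correctly, and you will need to track it when writing down the affine analogue of Proposition~\ref{matrix_elements}.
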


\textsl{Since the fixed point basis of $M$ corresponds to the
Gelfand-Tsetlin basis of the universal Verma module over
$U_v(\fgl_n)$, we propose to call the fixed point basis of $V$ the}
{\em affine Gelfand-Tsetlin basis}.

\subsection{Quantum toroidal algebra}
\label{guay} Let $(a_{kl})_{1\leq k,l\leq n}=\widehat{A}_{n-1}$
stand for the Cartan matrix of $\widehat{\fsl}_n$. The double affine
loop algebra $U'_v(\widehat{\fsl_n})$ is an associative algebra over
$\mathbb Q(v)$ generated by $e_{k,r}$, $f_{k,r}$, $v^{\pm h_k}$,
$h_{k,m}$ $(1\leq k\leq n, r \in \BZ,  m\in \BZ\setminus \{0\})$
with the relations~(\ref{1}--\ref{6}), where $k,l$ are understood as
residues modulo $n$, so that for instance if $k=n$ then $k+1=1$.

The quantum toroidal algebra \"{U}$_v(\widehat{\fsl}_n)$ is an
associative algebra over $\mathbb C(u,v)$ generated by $e_{k,r}$,
$f_{k,r}$, $v^{\pm h_k}$, $h_{k,m}$ $(1\leq k\leq n, r \in \BZ, m\in
\BZ\setminus \{0\})$ with the same relations as in
$U'_v(\widehat{\fsl_n})$ except for relations~(\ref{2},~\ref{5}) for
the pairs $(k,l)=(1,n),(n,1)$. These relations are modified as
follows. We introduce the shifted generating series
$\hat{x}_n^{\pm}(z):=x_n^{\pm}(zv^nu^2),\
\hat{\psi}_n^{\pm}(z)=\psi_n^{\pm}(zv^nu^2)$.

 Now the new relations read
\begin{equation}
\label{tor4} \hat{x}_n^{\pm}(z)x_1^{\pm}(w)(z-v^{\mp 1}w)=(v^{\mp
1}z-w)x_1^{\pm}(w)\hat{x}_n^{\pm}(z),
\end{equation}

\begin{equation}
\label{tor2.1} \hat{\psi}_n^s(z)x_1^{\pm}(w)(z-v^{\mp
1}w)=x_1^{\pm}(w) \hat{\psi}_n^s(z)(v^{\mp 1}z-w),
\end{equation}

\begin{equation}
\label{tor2.2} \psi_1^s(z)\hat{x}_n^{\pm}(w)(z-v^{\mp
1}w)=\hat{x}_n^{\pm}(w) \psi_1^s(z)(v^{\mp 1}z-w).
\end{equation}

Thus we have
$U'_v(\widehat{\fsl_n})=$\"{U}$_v(\widehat{\fsl}_n)$$/(v^nu^2=1)$.

Note that \"{U}$_v(\widehat{\fsl}_n)$ coincides with
\"{U}$'$--modification of \"{U} introduced in~\cite{vv2}, with $d$
not specialized to a complex number and with the central element
$c=1$, via the isomorphism \"{U}$_v(\widehat{\fsl}_n)$
$\overset{\Phi}\to$~\"{U}$'$, such that $\Phi(v)=v$ and
$\Phi(u)=d^{\frac{n}{2}}v^{-\frac{n}{2}}$. It is defined on the
generating series as
$$\Phi(x_i^{+}(z))=\be_{i-1}^{\pm}(d^{-i}z),\ \Phi(x_i^{-}(z))=\bff_{i-1}^{\pm}(d^{-i}z),\  \Phi(\psi_i^{\pm}(z))=\bk_{i-1}^{\pm}(d^{-i}z).$$

\subsection{Main theorem}
\label{quotients} For any $m<i\in\BZ$ we will denote by
$\ul{\CW}{}_{mi}$ the quotient $\ul{\CF}{}_i/\ul{\CF}{}_m$ of the
tautological vector bundles, living on
$\CP_{\ul{d}}\times\bC\subset\CP_{\ul{d}}\times\bS$. Once again,
$\pi:\CP_{\ul{d}}\times(\bC\backslash\{\infty\})\to\CP_{\ul{d}}$
denotes the standard projection. Let us consider the generating
series:
$$\bb_{mi}(z):=\Lambda^{\bullet}_{-1/z}(\pi_*(\ul{\CW}{}_{mi}\mid_{\bC\backslash \{\infty\}})):\ V_{\ul{d}}\to V_{\ul{d}}[[z^{-1}]]$$

\begin{cor}
\label{faktory} The expression
$\bb_{mi}(zv^{-i-2})^{-1}\bb_{mi}(zv^{-i})^{-1}\bb_{m,i-1}(zv^{-i})\bb_{m,i+1}(zv^{-i-2})$
is independent of the choice of $m$.
\end{cor}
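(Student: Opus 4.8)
The plan is to reduce \corref{faktory} to a statement purely about the tautological bundles $\ul{\CF}_k$ on $\CP_{\ul d}\times\bC$, just as \corref{faktor} was deduced from the multiplicativity of $\Lambda^\bullet_{-1/z}$. First I would recall that $\Lambda^\bullet_z(\ \cdot\ )$ is multiplicative on short exact sequences of coherent sheaves, so that for any $m<i$ one has
$$\bb_{mi}(z)=\Lambda^\bullet_{-1/z}(\pi_*(\ul{\CW}_{mi}|_{\bC\backslash\{\infty\}}))=\Lambda^\bullet_{-1/z}(\pi_*(\ul{\CF}_i|_{\bC\backslash\{\infty\}}))\cdot\Lambda^\bullet_{-1/z}(\pi_*(\ul{\CF}_m|_{\bC\backslash\{\infty\}}))^{-1},$$
using that $R^1\pi_*$ of these bundles restricted to $\bC\backslash\{\infty\}$ vanishes (the fixed-point computation of \propref{matrix_elements}b) already exhibits $\pi_*(\ul{\CW}_i|_{\bC\backslash\{\infty\}})$ as an honest vector bundle at every fixed point, hence everywhere by localization). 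Writing $\mathbf a_k(z):=\Lambda^\bullet_{-1/z}(\pi_*(\ul{\CF}_k|_{\bC\backslash\{\infty\}}))$ for $k\in\BZ$, the expression in the corollary becomes
$$\frac{\mathbf a_m(zv^{-i})}{\mathbf a_m(zv^{-i-2})}\cdot\frac{\mathbf a_{i-1}(zv^{-i})\,\mathbf a_{i+1}(zv^{-i-2})}{\mathbf a_i(zv^{-i})\,\mathbf a_i(zv^{-i-2})},$$
so the claim is equivalent to showing that the factor $\mathbf a_m(zv^{-i})/\mathbf a_m(zv^{-i-2})$ is independent of $m$ — equivalently, that $\mathbf a_{m}(zv^{-i})/\mathbf a_m(zv^{-i-2})=\mathbf a_{m'}(zv^{-i})/\mathbf a_{m'}(zv^{-i-2})$ for all $m,m'<i$.

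The second step is to verify this independence. Here I would exploit the periodicity built into the affine setup: by condition~\ref{PS}(a) we have $\ul{\CF}_{k+n}=\ul{\CF}_k(\bD_0)$, and $\bD_0=\bC\times 0_\bX$, so on $\CP_{\ul d}\times(\bC\backslash\{\infty\})$ the bundle $\ul{\CF}_{k+n}$ differs from $\ul{\CF}_k$ by twisting with the pullback of $\CO_{\bX}(0_\bX)$, which carries the extra equivariant character $u^{2}$ for the second $\BC^*$ (this is exactly the twist recorded in \eqref{3.1} and \eqref{3.3} by $u^{-\delta_{i,n}}$). Pushing forward by $\pi$ one gets $\mathbf a_{k-n}(z)$ as a shift of $\mathbf a_k$ by the character $u^{2}$ and a degree shift; in particular the \emph{difference} $\pi_*(\ul{\CF}_i|_{\bC\backslash\{\infty\}})-\pi_*(\ul{\CF}_m|_{\bC\backslash\{\infty\}})$ in $K$-theory is, for $m$ sufficiently negative, a sum of line bundles indexed by the fixed-point data $s_{i',j'}$ with $i'$ between $m+1$ and $i$. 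Concretely, restricting to a fixed point $\widetilde{\ul d}$ and using the description of the stalks as in \propref{matrix_elements}b) together with \eqref{relation d lambda}, $\mathbf a_k(z)|_{\widetilde{\ul d}}=\prod_{p\le k}(1-z^{-1}t_p^2v^{-2d_{k,p}})$ in the notation of \eqref{2.6} (extended to all $k\in\BZ$ via $d_{i+n,j+n}=d_{ij}$ and the $u$-twist). Then $\mathbf a_m(zv^{-i})/\mathbf a_m(zv^{-i-2})$ telescopes: each factor $(1-z^{-1}v^{-i}t_p^2v^{-2d_{m,p}})/(1-z^{-1}v^{-i-2}t_p^2v^{-2d_{m,p}})$ depends on $m$ only through $d_{m,p}$, and I would show the total product over $p\le m$ is eventually constant in $m$ because for $m\ll 0$ the exponents $d_{m,p}$ stabilize (again by \eqref{fei}: $d_{ij}=0$ for $i-j\gg0$, so $d_{m,p}$ for $p\le m$ with $m$ very negative is governed purely by the residue classes and the $u$-shift, which cancels in the ratio).

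I would finish by checking at a single fixed point that replacing $m$ by $m-1$ multiplies numerator and denominator by the same factor — namely the contribution of $p=m$, which is $(1-z^{-1}v^{-i}s)/(1-z^{-1}v^{-i-2}s)$ with $s$ the relevant character — and that this factor is killed once we also account for the shift coming from $\ul{\CF}_{m}\supset\ul{\CF}_{m-1}$ being a twist by one of the $\CO_\bC(-d\cdot 0)$ summands. Since the fixed-point basis $\{[\widetilde{\ul d}]\}$ spans $V$ (Thomason localization, exactly as in \secref{techn}) and $\bb_{mi}(z)$ is diagonal in it, equality at every fixed point gives the operator identity. The main obstacle I anticipate is bookkeeping the equivariant weights correctly across the periodicity isomorphism $\ul{\CF}_{k+n}\cong\ul{\CF}_k(\bD_0)$ — in particular making sure the $u^2$ (and the accompanying $v$-power) that appears when $k$ crosses a multiple of $n$ enters $\mathbf a_k$ in precisely the way that cancels in the ratio $\mathbf a_m(zv^{-i})/\mathbf a_m(zv^{-i-2})$; everything else is the same multiplicativity argument as in \corref{faktor}.
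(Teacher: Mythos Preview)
Your first step is the right idea --- and is exactly what the paper means by ``analogous to \corref{faktor}'' --- but you have made a bookkeeping slip that sends the rest of the proposal off on an unnecessary detour. With the (formal) substitution $\bb_{mk}(z)=\mathbf a_k(z)/\mathbf a_m(z)$, the four $\mathbf a_m$-contributions are
\[
\underbrace{\mathbf a_m(zv^{-i-2})}_{\bb_{mi}(zv^{-i-2})^{-1}}\cdot
\underbrace{\mathbf a_m(zv^{-i})}_{\bb_{mi}(zv^{-i})^{-1}}\cdot
\underbrace{\mathbf a_m(zv^{-i})^{-1}}_{\bb_{m,i-1}(zv^{-i})}\cdot
\underbrace{\mathbf a_m(zv^{-i-2})^{-1}}_{\bb_{m,i+1}(zv^{-i-2})}=1,
\]
so there is no residual factor $\mathbf a_m(zv^{-i})/\mathbf a_m(zv^{-i-2})$ at all. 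Each of the two arguments $zv^{-i}$ and $zv^{-i-2}$ occurs once in the numerator and once in the denominator. Your entire ``second step'' (periodicity $\ul\CF_{k+n}\cong\ul\CF_k(\bD_0)$, the $u^2$-twist, fixed-point stabilization as $m\to-\infty$) is therefore devoted to showing that $1$ is independent of $m$.

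Two further comments. First, the class $\mathbf a_k(z)=\Lambda^\bullet_{-1/z}(\pi_*(\ul\CF_k|_{\bC\backslash\{\infty\}}))$ is not well-defined as written: $\ul\CF_k$ is a rank-$n$ sheaf on the surface $\bS=\bC\times\bX$, not on $\bC$; only the quotients $\ul\CW_{mi}=\ul\CF_i/\ul\CF_m$, being torsion supported on $\bD_0\cong\bC$, give finite-rank pushforwards. The clean way to run the argument, and the one the paper has in mind, is to compare two choices $m'<m$: the short exact sequence $0\to\ul\CW_{m'm}\to\ul\CW_{m',k}\to\ul\CW_{m,k}\to0$ gives $\bb_{m',k}(z)=\bb_{m',m}(z)\,\bb_{m,k}(z)$ for $k=i-1,i,i+1$, and the four $\bb_{m',m}$-factors cancel by the same arithmetic as above. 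Second, this shows the expression is literally \emph{equal} for different $m$, not merely constant after a limiting argument; no localization or fixed-point check is needed.
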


 Proof is analogous to the proof of Corollary~\ref{faktor}.

\bigskip

We will denote by $\psi_i^{\pm}(z)$ the common value of the
expressions
\begin{equation}
\label{psishki}
t_{i+1}^{-1}t_iu^{-\delta_{i,n}}v^{d_{i+1}-2d_i+d_{i-1}-1}
\left(\bb_{m,i-n}(zv^{-i-2})^{-1}\bb_{m,i-n}(zv^{-i})^{-1}\bb_{m,i-1-n}(zv^{-i})\bb_{m,i+1-n}(zv^{-i-2})\right)^{\pm}.
\end{equation}

Recall that $v$ stands for the character of
$\widetilde{T}\times\BC^*\times\BC^*:\ (\ul{t},v,u)\mapsto v$. We
define the line bundle $\sL'_k:=v^k\sL_k$ on the correspondence
$\sE_{\ul{d},k}$, that is $\sL'_k$ and $\sL_k$ are isomorphic as
line bundles but the equivariant structure of $\sL'_k$ is obtained
from the equivariant structure of $\sL_k$ by the twist by the
character $v^k$.

For $1\leq k\leq n$ we consider the following generating series of
operators on $V$:

\begin{equation}
\label{raz1.1} \psi_k^{\pm}(z)=:\sum_{r=0}^{\pm \infty}
\psi^{\pm}_{k,r}z^{\mp r}:\ V_{\ul{d}} \to V_{\ul{d}}[[z^{\mp 1}]]
\end{equation}

\begin{equation}
\label{dvas1} x_{k}^{+}(z)=\sum_{r=-\infty}^\infty e_{k,r}z^{-r}:\
V_{\ul{d}}\to V_{\ul{d}-k}[[z,z^{-1}]]
\end{equation}

\begin{equation}
\label{tris1} x_{k}^{-}(z)=\sum_{r=-\infty}^\infty f_{k,r}z^{-r}:\
V_{\ul{d}}\to V_{\ul{d}+k}[[z,z^{-1}]]
\end{equation}

\begin{equation}
\label{ddvas1}
e_{k,r}:=t_{k+1}^{-1}v^{d_{k+1}-d_k+1-k}\bp_*((L'_{k-n})^{\otimes
r}\otimes \bq^*):\ V_{\ul{d}}\to V_{\ul{d}-k}
\end{equation}

\begin{equation}
\label{ttris1}
f_{k,r}:=-t_k^{-1}u^{-\delta_{k,n}}v^{d_k-d_{k-1}+k}\bq_*(L_{k-n}\otimes
(L'_{k-n})^{\otimes r}\otimes \bp^*):\ V_{\ul{d}}\to V_{\ul{d}+k}
\end{equation}

\begin{thm}
\label{varaf} These generating series of operators $\psi_{k}^\pm(z),
x_{k}^\pm(z)$ on $V$ defined in~(\ref{psishki}--\ref{ttris1})
satisfy the relations in \"{U}$_v(\widehat{\fsl}_n)$, i.e. they give
rise to an action of \"{U}$_v(\widehat{\fsl}_n)$ on $V$.
\end{thm}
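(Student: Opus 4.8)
The plan is to prove \thmref{varaf} by the method of \secref{techn}: translate every defining relation of \"{U}$_v(\widehat{\fsl}_n)$ into an identity of rational functions between matrix coefficients in the fixed point basis $\{[\widetilde{\ul d}]\}$ of $V$, which spans $V$ over $\on{Frac}(K^{\widetilde T\times\BC^*\times\BC^*}(pt))$ by Thomason localization and the finiteness of the fixed point set (\lemref{evi}, \lemref{dent}). The first step is the affine analogue of \propref{matrix_elements}: closed formulas for the matrix coefficients of $e_{k,r},f_{k,r}$ and for the eigenvalues of $\psi_k^\pm(z)$ on $[\widetilde{\ul d}]$, obtained by the same localization computation on the correspondences $\sE_{\ul d,k}\subset\CP_{\ul d}\times\CP_{\ul d'}$ that underlies \propref{matrix_elements} (in cohomology this is carried out in \cite{ffnr}). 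As there, the only nonzero coefficients join $\widetilde{\ul d}$ to $\widetilde{\ul d}\pm\delta_{k,j}$, and --- in the variables $s_{i,j}:=t_j^2v^{-2d_{ij}}$, with indices read modulo $n$, with the periodic conventions $d_{i+n,j+n}=d_{ij}$ of (\ref{fei}) and $t_{j+n}$ equal to $t_j$ twisted by the $\widetilde T\times\BC^*\times\BC^*$-weight of $\O(\bD_0)$, and with the products truncated using $d_{ij}=0$ for $i-j\gg0$ --- they have exactly the shape of the formulas of \propref{matrix_elements}, up to the explicit $u^{-\delta_{k,n}}$ prefactors built into (\ref{3.3}), (\ref{ttris1}), (\ref{psishki}); part (b) moreover uses \corref{faktory}, which is what makes (\ref{psishki}) well defined.

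\textbf{Relations away from the wrap-around edge.}
Every relation of \"{U}$_v(\widehat{\fsl}_n)$ that is \emph{not} among the modified ones --- that is (\ref{1}), (\ref{3}), (\ref{4}), (\ref{6}) for all admissible index configurations, and (\ref{2}), (\ref{5}) for every pair other than $(1,n)$ and $(n,1)$ --- is verified verbatim as in \secref{techn}. Indeed, for indices whose cyclic distance is $\geq2$, or $=1$ but not the wrap-around pair, the matrix coefficients entering these relations have literally the form used in \secref{techn}, so the rational-function identities to be checked are exactly the ones already established there. (For these cases to exhaust the ordinary relations one takes $n>2$, so that the affine Dynkin diagram is a genuine $n$-cycle; the double-bond case $n=2$ needs a separate treatment, consistently with the $n>2$ hypothesis of \thmref{braval}.)

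\textbf{The wrap-around edge and the Cartan part.}
The new ingredient is to locate the shift appearing in $\hat x_n^\pm(z)=x_n^\pm(zv^nu^2)$, $\hat\psi_n^\pm(z)=\psi_n^\pm(zv^nu^2)$. Since $\CF_n=\CF_0(\bD_0)$, the tautological bundle $\ul{\CW}{}_{m,n}$ differs from $\ul{\CW}{}_{m,0}$ by a twist by $\O(\bD_0)$, whose restriction to $\bC\times0_\bX$ carries the weight $u^2$, while the factor $v^n$ is produced by the periodicity $d_{i+n,j+n}=d_{ij}$ together with the $v^k$-renormalization distinguishing $L'_k$ from $L'_{k-n}$. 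One checks that, relative to $x_1^\pm(z)$ and $\psi_1^\pm(z)$, the hatted series $\hat x_n^\pm(z)$ and $\hat\psi_n^\pm(z)$ have the same fixed-point-basis description as an ordinary neighbouring generator does relative to $x_1^\pm(z)$; hence each modified relation (\ref{tor4}), (\ref{tor2.1}), (\ref{tor2.2}) becomes, after the substitution $x_n^\pm\mapsto\hat x_n^\pm$, an instance of (\ref{5}) resp.\ (\ref{2}) already verified in the previous step, and the unmodified (\ref{3}), (\ref{4}), (\ref{6}) involving the index $n$ are equivalent, by a scalar rescaling of spectral parameters, to their interior-edge counterparts. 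Finally the Cartan relations are handled exactly as in \secref{techn}: relation (\ref{3}) for $k=l$ is unmodified, so one introduces $\varphi_k^\pm(z)$ via (\ref{1'}), observes from the affine \propref{matrix_elements}(a) that it is diagonal in the fixed point basis and uniquely determined once $\varphi_{k,0}^\pm$ is fixed, pins $\varphi_{k,0}^\pm$ down using (\ref{3.1}--\ref{3.3}) and \thmref{braval}, checks (\ref{2'}), (\ref{3'}) and their hatted variants as before, and then --- computing $\varphi_k^\pm(z)$ first on $\widetilde{\ul{d_0}}$ and propagating it by (\ref{3'}) and \corref{faktory} --- identifies $\varphi_k^\pm(z)=\psi_k^\pm(z)$ with the series (\ref{psishki}). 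Then (\ref{1}), (\ref{2}), (\ref{3}) and (\ref{tor2.1}), (\ref{tor2.2}) follow from (\ref{1'}), (\ref{2'}), (\ref{3'}).

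\textbf{Main obstacle.}
The genuinely new work is the bookkeeping of the $u$-shift in the third step: one must prove that the $u^{-1}$-twists inserted \emph{only} into the index-$n$ generators in (\ref{3.3}), (\ref{ttris1}), (\ref{psishki}), combined with the periodicity $d_{i+n,j+n}=d_{ij}$ and the $L'$-renormalization, reproduce \emph{precisely} the substitution $z\mapsto zv^nu^2$ along the edge $\{1,n\}$ and deform nothing else, so that after passing to the hatted operators the wrap-around edge is indistinguishable, at the level of fixed-point matrix coefficients, from an interior edge. Once this is pinned down, every remaining verification is a rational-function identity already contained in \secref{techn}. (A less self-contained shortcut for this point is to match the operators with the Varagnolo--Vasserot realization of \cite{vv2} through the isomorphism $\Phi$ recorded in \secref{guay}, under which the shift $v^nu^2$ is exactly the incarnation of the unspecialized parameter of \"{U}$'$.)
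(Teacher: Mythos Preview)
Your proposal is correct and follows essentially the same route as the paper. The paper's own proof is organized around two devices that make your ``main obstacle'' step cleaner: (i) it packages the $u$-dependence into the weights $p_{i,j}:=t_{j\bmod n}^2 v^{-2d_{ij}} u^{2\lceil (j-n)/n\rceil}$, so that \propref{analog} is word-for-word \propref{matrix_elements} with $s_{i,j}\rightsquigarrow p_{i,j}$ (plus the single $u^{-\delta_{i,n}}$ prefactor), and all of \secref{techn} carries over verbatim; (ii) it extends the operators to all $k\in\BZ$ by the same formulas and uses the exact periodicity $p_{i+n,j+n}=u^2 p_{i,j}$ to obtain $\hat\psi_n^\pm(z)=\psi_0^\pm(z)$, $\hat x_n^+(z)=v^{-n}x_0^+(z)$, $\hat x_n^-(z)=v^nu^2 x_0^-(z)$, so that (\ref{tor4})--(\ref{tor2.2}) are literally the already-verified relations (\ref{2}),(\ref{5}) for the adjacent pair $(0,1)$. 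This is precisely your ``wrap-around edge'' step made explicit; your narrative about twisting $t_{j+n}$ by the weight of $\O(\bD_0)$ is an informal description of the identity $p_{i+n,j+n}=u^2 p_{i,j}$.
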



First, we compute the matrix coefficients of operators $e_{i,r},
f_{i,r}$ and the eigenvalues of $\psi_i^{\pm}(z)$. For accomplishing
this goal we need to know the torus character in the tangent space
to $\sE_{\ul{d},i}$ (and $\CP_{\ul{d}}$) at the torus fixed point
given by indices $\widetilde{\ul{d}},\widetilde{\ul{d'}}$ (and
$\widetilde{\ul{d}}$ correspondingly). These characters are computed
in~\cite{ffnr} (see Propositions 4.15, 4.21 and Remark 4.17 of {\em
loc. cit.}):

\begin{prop} \label{character to correspondence}
a) The torus character in the tangent space to $\sE_{\ul{d},i}$ at
the torus fixed point given by indices
$\widetilde{\ul{d}},\widetilde{\ul{d'}}$ equals
$$
\sum_{k=1}^n \sum_{l\leq k}^{l'\leq k-1} \frac{t^2_l}{t^2_{l'}}\cdot
v^2\frac {({v}^{2d_{(k-1)l'}}-1)({v}^{-2d_{kl}}-1)}{v^2-1}\cdot
{u}^{2\lfloor \frac {-l'}n\rfloor-2\lfloor \frac
{-l}n\rfloor}+\sum_{k=1}^n \sum_{l'\leq k-1}
\frac{t^2_k}{t^2_{l'}}\cdot v^2\frac
{{v}^{2d_{(k-1)l'}}-1}{v^2-1}\cdot {u}^{2\lfloor \frac
{-l'}n\rfloor-2\lfloor \frac {-k}n\rfloor}-
$$
$$
-\sum_{k=1}^n \sum_{l\leq k}^{l'\leq k}\frac{t^2_l}{t^2_{l'}}\cdot
v^2\frac {({v}^{2d_{kl'}}-1)({2}^{-2d_{kl}}-1)}{v^2-1}\cdot
{u}^{2\lfloor \frac {-l'}n\rfloor-2\lfloor \frac
{-l}n\rfloor}-\sum_{k=1}^n \sum_{l\leq k} \frac{t^2_l}{t^2_k}\cdot
v^2\frac {{v}^{-2d_{kl}}-1}{v^2-1}\cdot {u}^{2\lfloor \frac
{-k}n\rfloor-2\lfloor \frac {-l}n\rfloor}+
$$
$$
+v^2-{v}^{-2d_{ij}+2d_{(i-1)j}}+\frac{t^2_j}{t^2_i}\cdot
{v}^{-2d_{ij}+2d_{ii}}\cdot {u}^{2\lfloor \frac
{-i}n\rfloor-2\lfloor \frac {-j}n\rfloor}+\sum_{j\neq k\leq
i-1}\frac{t^2_j}{t^2_k}\cdot {v}^{-2d_{ij}}\cdot
({v}^{2d_{ik}}-{v}^{2d_{(i-1)k}})\cdot {u}^{2\lfloor \frac
{-k}n\rfloor-2\lfloor \frac {-j}n\rfloor}
$$
if $\widetilde{\ul{d}}{}'=\widetilde{\ul{d}}+\delta_{i,j}$ for
certain $j\leq i$.


b) The torus character in the tangent space to $\CP_{\ul{d}}$ at the
torus fixed point $\widetilde{\ul{d}}$ equals
$$ \sum_{k=1}^n \sum_{l\leq k}^{l'\leq k-1}
\frac{t^2_l}{t^2_{l'}}\cdot v^2\frac
{({v}^{2d_{(k-1)l'}}-1)({v}^{-2d_{kl}}-1)}{v^2-1}\cdot {u}^{2\lfloor
\frac {-l'}n\rfloor-2\lfloor \frac {-l}n\rfloor}+\sum_{k=1}^n
\sum_{l'\leq k-1} \frac{t^2_k}{t^2_{l'}}\cdot v^2\frac
{{v}^{2d_{(k-1)l'}}-1}{v^2-1}\cdot {u}^{2\lfloor \frac
{-l'}n\rfloor-2\lfloor \frac {-k}n\rfloor}-
$$$$
-\sum_{k=1}^n \sum_{l\leq k}^{l'\leq k}\frac{t^2_l}{t^2_{l'}}\cdot
v^2\frac {({v}^{2d_{kl'}}-1)({v}^{-2d_{kl}}-1)}{v^2-1}\cdot
{u}^{2\lfloor \frac {-l'}n\rfloor-2\lfloor \frac
{-l}n\rfloor}-\sum_{k=1}^n \sum_{l\leq k} \frac{t^2_l}{t^2_k}\cdot
v^2\frac {{v}^{-2d_{kl}}-1}{v^2-1}\cdot {u}^{2\lfloor \frac
{-k}n\rfloor-2\lfloor \frac {-l}n\rfloor}$$

\end{prop}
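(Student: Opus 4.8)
The content of \propref{character to correspondence} is the computation of the $\widetilde{T}\times\BC^*\times\BC^*$-characters of two honest finite-dimensional modules, namely the Zariski tangent spaces $T_{\widetilde{\ul{d}}}\CP_{\ul{d}}$ and $T_{(\widetilde{\ul{d}},\widetilde{\ul{d'}})}\sE_{\ul{d},i}$. These modules do not depend on any choice of (co)homology theory, so the statement is the same as the one established in~\cite{ffnr} (Propositions~4.15 and~4.21 and Remark~4.17 of {\em loc.\ cit.}), and the plan is to reproduce that computation; I indicate the steps.

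For part (b), $\CP_{\ul{d}}$ is a fine moduli space, so $T_{\widetilde{\ul{d}}}\CP_{\ul{d}}$ is the first (parabolic) self-$\Ext$ of $\CF_\bullet(\widetilde{\ul{d}})$. Via the Biswas realization of \secref{realization} this becomes the $G=\BZ/n\BZ$-invariant part of the $\Ext$'s of $\tilde\CF$ with itself on $\bC\times\bX$, and by~(\ref{explicit tilde fixed points}) the fixed sheaf $\tilde\CF=\bigoplus_{l=1}^n J_{\lambda^l}(-(l-1)\bD_0)w_l$ is a direct sum of torus-fixed, twisted monomial-ideal sheaves. Hence the character decomposes into a sum over $1\le l,m\le n$ of $\frac{t_m^2}{t_l^2}$ times $\chi\big(\Ext^\bullet_{\bC\times\bX}(J_{\lambda^l},J_{\lambda^m})\big)$, further twisted by the power of $u$ coming from the $(-(l-1)\bD_0)$ factors (i.e.\ from the character of $\CO_{\bS}(\bD_0)|_{\bD_0}$). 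Each $\chi\big(\Ext^\bullet(J_{\lambda},J_{\mu})\big)$ is computed on the affine chart $(\bC-\infty_\bC)\times(\bX-\infty_\bX)\cong\CC^2$ by the classical arm--leg formula for $\Ext$-groups between monomial ideals, the $\bC$- and $\bX$-coordinates contributing $v^{\mp2}$ and $u^{\mp2}$. Extracting the $\BZ/n\BZ$-invariants picks out the relevant residues, and rewriting the Young-diagram data $\lambda^l$ in terms of the integers $d_{ij}$ via~(\ref{relation lambdas})--(\ref{relation d lambda}) puts the answer into the stated closed form after collecting terms. (Equivalently, one may use the ADHM/quiver description of $\CP_{\ul{d}}$ and the standard tangent-space-at-a-fixed-point formula for Nakajima varieties.)

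For part (a), the correspondence $\sE_{\ul{d},i}$ is the variety of flags $\CF'_j\subset\CF_j$ ($j\equiv i\bmod n$) over $\CP_{\ul{d}}\times\CP_{\ul{d}'}$, so its tangent space at a fixed point fits into an exact sequence relating it to $T\CP_{\ul{d}}$, $T\CP_{\ul{d}'}$ and the deformations of the length-one quotient $\CF_j/\CF'_j$; equivalently one computes it directly as the first self-$\Ext$ of the flagged object. Since $\CF'_j$ is obtained from $\CF_j$ by deleting exactly the box by which $\widetilde{\ul{d}}+\delta_{i,j}$ differs from $\widetilde{\ul{d}}$, this yields the first four sums of (a) --- a copy of the formula of (b) with the degrees adjusted --- together with the remaining terms, which are precisely the correction coming from that single box. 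The whole argument is elementary once the arm--leg formula is available; the only real work --- and the place where errors are easy --- is the combinatorial reassembly: selecting the right $\BZ/n\BZ$-weight spaces, converting partitions to the $d_{ij}$, and especially getting every power of $u$ (i.e.\ every floor $\lfloor -l/n\rfloor$) correct so that the result matches the stated closed form term by term. That bookkeeping is exactly what is carried out in~\cite{ffnr}, which I would transcribe.
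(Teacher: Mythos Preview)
Your proposal is correct and matches the paper's own treatment: the paper does not give an independent proof of this proposition but simply cites \cite{ffnr} (Propositions~4.15, 4.21 and Remark~4.17), exactly as you do, and your sketch of the underlying argument (tangent space as parabolic self-$\Ext$, Biswas orbifold realization, arm--leg formula for monomial ideals, $\BZ/n\BZ$-invariants, then reindexing via~(\ref{relation lambdas})--(\ref{relation d lambda})) accurately summarizes the computation carried out there.
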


So analogously to Theorem 3.17~(\cite{ffnr}) we get the following
proposition

\begin{prop}
\label{analog}
 \textit{Define
$p_{i,j}:=t_{j\pmod{n}}^2v^{-2d_{ij}}u^{-2\lfloor\frac{-j+n}{n}\rfloor}=t_{j\pmod{n}}^2v^{-2d_{ij}}u^{2\lceil\frac{j-n}{n}\rceil}.$}

a) The matrix coefficients of the operators $f_{i,r}, e_{i,r}$ in
the fixed point basis $\{[\widetilde{\ul{d}}]\}$ of $V$ are as
follows:
$$f_{{i,r}[\widetilde{\ul{d}},\widetilde{\ul{d}}{}']}=
-t_i^{-1}u^{-\delta_{i,n}}v^{d_i-d_{i-1}+i} p_{i,j}(p_{i,j}v^i)^r
(1-v^2)^{-1}\prod_{j\ne k\leq i}(1-p_{i,j}p_{i,k}^{-1})^{-1}
\prod_{k\leq i-1}(1-p_{i,j}p_{i-1,k}^{-1})$$ if
$\widetilde{\ul{d}}{}'=\widetilde{\ul{d}}+\delta_{i,j}$ for certain
$j\leq i$;
$$e_{{i,r}[\widetilde{\ul{d}},\widetilde{\ul{d}}{}']}=
t_{i+1}^{-1}v^{d_{i+1}-d_i+1-i}(p_{i,j}v^{i+2})^r(1-v^2)^{-1}\prod_{j\ne
k\leq i}(1-p_{i,k}p_{i,j}^{-1})^{-1} \prod_{k\leq
i+1}(1-p_{i+1,k}p_{i,j}^{-1})$$ if
$\widetilde{\ul{d}}{}'=\widetilde{\ul{d}}-\delta_{i,j}$ for certain
$j\leq i$.

All the other matrix coefficients of $e_{i,r},f_{i,r}$ vanish.

\noindent b) The eigenvalue of $\psi_i^{\pm}(z)$ on
$\{[\widetilde{\ul{d}}]\}$ equals
$$ \frac{t_iv^{d_{i+1}-2d_{i}+d_{i-1}-1}}{t_{i+1}u^{\delta_{i,n}}}\prod_{j\le
i}(1-z^{-1}v^{i+2}p_{i,j})^{-1}(1-z^{-1}v^ip_{i,j})^{-1}\prod_{j\le
i+1}(1-z^{-1}v^{i+2}p_{i+1,j})\prod_{j\le
i-1}(1-z^{-1}v^ip_{i-1,j}),
$$
where it is expanded in $z^{\mp1}$ depending on the sign $\pm$.

\end{prop}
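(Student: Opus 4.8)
The plan is to imitate the proof of the rank-$n$ statement (Proposition~\ref{matrix_elements}) and of its equivariant-cohomology counterpart, Theorem~3.17 of~\cite{ffnr}: pass to the basis of $\widetilde{T}\times\BC^*\times\BC^*$-fixed points by the Thomason localization theorem, express every operator via the localization formula for push--pull through a correspondence (as in~\cite{nak-quiver} and~\cite{ffnr}), and then run the same algebraic simplification as in the rank case, with $s_{i,j}$ replaced by $p_{i,j}$ and the extra $u$-weights supplied by Proposition~\ref{character to correspondence}.

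First I would analyze the fixed points of the correspondence. From the explicit description~(\ref{swim}),~(\ref{explicit tilde fixed points}) of the fixed parabolic sheaves and from the definition of $\sE_{\ul{d},i}$ (the flag $\CF'_j\subset\CF_j$ for $j\equiv i\pmod n$) one checks that the $\widetilde{T}\times\BC^*\times\BC^*$-fixed points of $\sE_{\ul{d},i}$ are isolated, that exactly one of them lies over a pair $(\widetilde{\ul{d}},\widetilde{\ul{d}}{}')$ with $\widetilde{\ul{d}}{}'=\widetilde{\ul{d}}+\delta_{i,j}$, $j\le i$, and that none lies over any other pair; this already accounts for the vanishing of all the remaining matrix coefficients. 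At that fixed point the quotient $\CF_{i-n}/\CF'_{i-n}$ is the one-dimensional $T$-module spanned by the monomial of the box being added, and I would record that its character is precisely $p_{i,j}$, the $u$-power appearing in $p_{i,j}$ being exactly the contribution of the twists by $-(l-1)\bD_0$ in~(\ref{explicit tilde fixed points}); hence the fibre of $L'_{i-n}=v^{i}L_{i-n}$ at this point is $v^{i}p_{i,j}$. Since all fixed-point sets are finite, $\bp_*,\bq_*,\bp^*,\bq^*$ are defined on the localized equivariant $K$-theory despite $\bp$ not being proper, and localization applies as usual.

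For part (a), I would then write $f_{i,r[\widetilde{\ul{d}},\widetilde{\ul{d}}{}']}$, up to the scalar prefactor of~(\ref{ttris1}), as the fibre $p_{i,j}(v^{i}p_{i,j})^{r}$ of $L_{i-n}\otimes(L'_{i-n})^{\otimes r}$ times the quotient of $\Lambda^{\bullet}_{-1}$ of the cotangent space to $\CP_{\ul{d}}$ at $\widetilde{\ul{d}}$ by $\Lambda^{\bullet}_{-1}$ of the cotangent space to $\sE_{\ul{d},i}$ at the fixed point above it (for $e_{i,r}$ one uses the transposed correspondence and the prefactor of~(\ref{ddvas1})). By Proposition~\ref{character to correspondence} the tangent character of $\sE_{\ul{d},i}$ at this point equals the tangent character of $\CP_{\ul{d}}$ at $\widetilde{\ul{d}}$ plus the four-term block displayed on the last line of part (a) of that proposition, so by multiplicativity of $\Lambda^{\bullet}_{-1}$ the quotient reduces to the reciprocal of the $\Lambda^{\bullet}_{-1}$-factor contributed by that block. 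Rewriting there each difference $v^{2}-v^{2a}$ and each monomial $t_j^{2}t_k^{-2}v^{\cdots}u^{\cdots}$ in terms of the weights $p_{i,\cdot}$, this reciprocal becomes exactly $(1-v^{2})^{-1}\prod_{j\ne k\le i}(1-p_{i,j}p_{i,k}^{-1})^{-1}\prod_{k\le i-1}(1-p_{i,j}p_{i-1,k}^{-1})$, and multiplying by the prefactor gives the asserted formula. By construction this last step is, term by term, the rank-$n$ computation behind Proposition~\ref{matrix_elements} (equivalently Theorem~3.17 of~\cite{ffnr}) rewritten multiplicatively.

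Part (b) follows from Corollary~\ref{faktory}, the multiplicativity of $\Lambda^{\bullet}_{z}$ on short exact sequences of coherent sheaves, and the identification --- read off once more from~(\ref{swim}),~(\ref{explicit tilde fixed points}), exactly as in the rank case where one finds $\{s_{i,j}\}_{j\le i}$ --- of the $\widetilde{T}\times\BC^*\times\BC^*$-characters of the stalk of $\pi_{*}(\ul{\CW}{}_{mi}\mid_{\bC\backslash\{\infty\}})$ at $\widetilde{\ul{d}}$ with the set $\{p_{i,j}\}_{m<j\le i}$. Hence $\bb_{mi}(zv^{-a})\mid_{\widetilde{\ul{d}}}=\prod_{m<j\le i}(1-z^{-1}v^{a}p_{i,j})$, and substituting into~(\ref{psishki}) yields the product in (b); its products over $j\le i$ and $j\le i\pm1$ are nominally infinite but telescope to finite ones, because $p_{i,j}=p_{i\pm1,j}$ whenever $d_{ij}=d_{i\pm1,j}=0$ (alternatively, one keeps $m=i-n$ and invokes Corollary~\ref{faktory}). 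The main obstacle is the last step of part (a): verifying the cancellation in the quotient of the two $\Lambda^{\bullet}_{-1}$-classes and, above all, keeping the $u$-exponents straight, so that the floor shifts $\lfloor-l/n\rfloor$ of Proposition~\ref{character to correspondence} line up with the $u$-powers built into $p_{i,j}$ and into the $-(l-1)\bD_0$-twists of~(\ref{explicit tilde fixed points}); this is bookkeeping rather than a conceptual difficulty, and once it is settled everything reduces to the already-known rank-$n$ identities.
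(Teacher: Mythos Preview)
Your proposal is correct and follows exactly the approach the paper indicates: the paper gives no proof of this proposition beyond the sentence ``analogously to Theorem~3.17~\cite{ffnr} we get the following proposition,'' and your write-up spells out precisely that analogy --- localization to fixed points, the Bott--Lefschetz formula with the tangent characters supplied by Proposition~\ref{character to correspondence}, and the cancellation that leaves only the four-term block (yielding the displayed products in $p_{i,j}$). The paper later confirms this reading by writing out the same Bott--Lefschetz quotient explicitly in the proof of Proposition~\ref{GTaff}; one minor slip is that you write ``cotangent'' where the paper's convention uses tangent weights, but your subsequent identification of the quotient with $(1-v^2)^{-1}\prod_{j\ne k\le i}(1-p_{i,j}p_{i,k}^{-1})^{-1}\prod_{k\le i-1}(1-p_{i,j}p_{i-1,k}^{-1})$ is the correct one and matches the paper.
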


\begin{rem}{\em These formulas are the same as in
Proposition~\ref{matrix_elements} with the change
$s_{i,j}\rightsquigarrow p_{i,j}$ and the factor $u^{-\delta_{i,n}}$
appearing in $\psi_i^\pm(z), f_i(z)$.}
\end{rem}


\begin{proof} [Proof of Theorem~\ref{varaf}.]
 For any $k\in\BZ$ we define $x_k^\pm(z),\psi_k^{\pm}(z)$ by the same
formulas~(\ref{psishki}--\ref{ttris1}) with $\delta_{k,n}$ being
changed to $\delta_{k\pmod{n},0}$.

  First, because of the above remark and our computational proof of Theorem~\ref{var}, relations (\ref{1}--\ref{6}) still hold.
Indeed, relations (\ref{4}--\ref{6}) are verified along the same
lines with just $p_{i,j}$ instead of $s_{i,j}$. Similarly with
(\ref{1}--\ref{2}). The only nontrivial equality is $\psi_{i,0}^+ -
\psi_{i,0}^-=\chi_{i,0}$, where $\chi_{i,0}$ is defined in the same
way with $p_{ij}$'s instead of $s_{ij}$'s. However, it is a
statement of Theorem~\ref{braval}.\footnote{\ Actually, it reduces
to the equality from the proof of Proposition 2.21,~\cite{bf}. The
point why $u^{-\delta_{i,n}}$ appears now is that $\prod_{j\leq
i+1}p_{i+1,j}\prod_{j\leq
i}p_{i,j}^{-1}=t_{i+1}^2u^{2\lceil\frac{i+1-n}{n}\rceil}v^{2d_{i}-2d_{i+1}}$,
while for $s_{i,j}$ we had the same equality without
$u^{2\lceil\frac{i+1-n}{n}\rceil}$.} The relation (\ref{3}) follows.

 So the only thing left is to verify relations (\ref{tor4}--\ref{tor2.2}).
 Let us point out that $p_{i+n,j+n}=u^2p_{i,j}$ for all $i, j$.
 Hence formulas of Proposition~\ref{analog} imply that for any $k\in \ZZ$:
$$\psi_{k}^\pm(z)=\psi_{k+n}^\pm(v^nu^2z),\ x_k^+(z)=v^n\cdot x_{k+n}^+(v^nu^2z),\ x_k^-(z)=v^{-n}u^{-2}\cdot x_{k+n}^-(v^nu^2z).$$
 In particular, we get
$$\hat{\psi}_n^{\pm}(z)=\psi_0^{\pm}(z),\  \hat{x}_n^+(z)=v^{-n}x_0^+(z),\ \hat{x}_n^-(z)=v^nu^2x_0^-(z).$$
 Now relations~(\ref{tor4}--\ref{tor2.2}) follow again from
Theorem~\ref{var} and the above remark.
\end{proof}

\subsection{Specialization of Gelfand-Tsetlin basis}
We fix a positive integer $K$ (a level). We consider an $n$-tuple
$\mu=(\mu_{1-n},\ldots,\mu_0)\in\BZ^n$ such that
$\mu_0+K\geq\mu_{1-n}\geq\mu_{2-n}\geq\ldots\geq\mu_{-1}\geq\mu_0$.
We view $\mu$ as a dominant (integrable) weight of
$\widehat{\fgl}_n$ of level $K$. We extend $\mu$ to a nonincreasing
sequence $\widetilde{\mu}=(\widetilde{\mu}_i)_{i\in\BZ}$ setting
$\widetilde{\mu}_i:=\mu_{i\pmod{n}}+\lfloor\frac{-i}{n}\rfloor K$.

We define a subset $D(\mu)$ ({\em affine Gelfand-Tsetlin patterns})
of the set $D$ of all collections $\widetilde{\ul{d}}$ satisfying
the conditions~(\ref{fei}) as follows:
\begin{equation}
\label{ryb} \widetilde{\ul{d}}\in D(\mu)\ \on{iff}\
d_{ij}-\widetilde{\mu}_j\leq d_{i+l,j+l}-\widetilde{\mu}_{j+l}\
\forall\ j\leq i,\ l\geq0.
\end{equation}

We specialize the values of $t_1,\ldots,t_n,v,u$ so that
\begin{equation}
\label{rybn} u=v^{-K-n},\ t_j=v^{\widetilde{\mu}_j-j+1}.
\end{equation}

We define the renormalized vectors
\begin{equation}
\label{renorm} \langle\widetilde{\ul{d}}\rangle:=
C^{-1}_{\widetilde{\ul{d}}}[\widetilde{\ul{d}}]
\end{equation}

where $C_{\widetilde{\ul{d}}}$ is the product $\prod_{w\in
T_{\widetilde{\ul{d}}}\CP_{\ul{d}}}(1-w)$ and $w$ runs over all
$\widetilde{T}\times\BC^*\times\BC^*$-weights in the tangent space
to $\CP_{\ul{d}}$ at the point $\widetilde{\ul{d}}$. The explicit
formula for the multiset $\{w\}$ is provided by
Proposition~\ref{character to correspondence}b).


\begin{prop}
\label{GTaff}  The only nonzero matrix coefficients of the operators
$f_{i,r}, e_{i,r}$ in the renormalized fixed point basis
$\{\langle\widetilde{\ul{d}}\rangle\}$ of $V$ are as follows:
$$e_{{i,r}\langle\widetilde{\ul{d}}+\delta_{i,j},\widetilde{\ul{d}}\rangle}=
t_{i+1}^{-1}v^{d_{i+1}-d_i-i}(p_{i,j}v^{i})^r
(1-v^2)^{-1}\prod_{j\ne k\leq i}(1-p_{i,j}p_{i,k}^{-1})^{-1}
\prod_{k\leq i-1}(1-p_{i,j}p_{i-1,k}^{-1}),$$
$$f_{{i,r}\langle\widetilde{\ul{d}}-\delta_{i,j},\widetilde{\ul{d}}\rangle}=-t_i^{-1}u^{-\delta_{i,n}}v^{d_i-d_{i-1}-1+i}
p_{i,j}v^2(p_{i,j}v^{i+2})^r(1-v^2)^{-1}\prod_{j\ne k\leq
i}(1-p_{i,k}p_{i,j}^{-1})^{-1} \prod_{k\leq
i+1}(1-p_{i+1,k}p_{i,j}^{-1}).$$


\end{prop}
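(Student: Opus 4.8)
The plan is to derive both formulas from \propref{analog}a) by the diagonal change of basis $\langle\widetilde{\ul{d}}\rangle=C_{\widetilde{\ul{d}}}^{-1}[\widetilde{\ul{d}}]$. For any operator $X$ on $V$ and fixed points $\widetilde{\ul{a}},\widetilde{\ul{b}}$ this change of basis transforms the matrix coefficients by $X_{\langle\widetilde{\ul{a}},\widetilde{\ul{b}}\rangle}=(C_{\widetilde{\ul{b}}}/C_{\widetilde{\ul{a}}})\,X_{[\widetilde{\ul{a}},\widetilde{\ul{b}}]}$, so in particular it preserves the vanishing pattern; thus by \propref{analog}a) the only nonzero coefficients of $e_{i,r}$ (resp. $f_{i,r}$) are the ones between $\langle\widetilde{\ul{d}}+\delta_{i,j}\rangle$ and $\langle\widetilde{\ul{d}}\rangle$ (resp. $\langle\widetilde{\ul{d}}-\delta_{i,j}\rangle$ and $\langle\widetilde{\ul{d}}\rangle$). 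Substituting the coefficients of \propref{analog}a) for the collections $\widetilde{\ul{d}}\pm\delta_{i,j}$ --- and using that incrementing the $(i,j)$-entry multiplies $p_{i,j}$ by $v^{-2}$, increases $d_i$ by $1$, and leaves $d_{i\pm1}$ and the other $p_{i,k},p_{i\pm1,k}$ untouched --- the statement is reduced to computing the scalars $C_{\widetilde{\ul{d}}}/C_{\widetilde{\ul{d}}\pm\delta_{i,j}}$.

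To evaluate these scalars I would use $C_{\widetilde{\ul{d}}}=\prod_w(1-w)$ over the $\widetilde{T}\times\BC^*\times\BC^*$-weights of $T_{\widetilde{\ul{d}}}\CP_{\ul{d}}$, given by \propref{character to correspondence}b). Incrementing $\widetilde{\ul{d}}\rightsquigarrow\widetilde{\ul{d}}+\delta_{i,j}$ changes $d_{i+ns,j+ns}$ for all $s\in\BZ$, but among the representatives $(d_{kl})_{1\le l\le k\le n}$ occurring in that formula it changes only $d_{ij}$ (a direct check, using $1\le j\le i\le n$). One then isolates the summands of \propref{character to correspondence}b) that involve $d_{ij}$ --- namely those with $(k,l)=(i,j)$, with $(k,l')=(i,j)$ among the $d_{kl'}$-slots, and with $(k-1,l')=(i,j)$ among the $d_{(k-1)l'}$-slots --- computes the resulting change of the multiset of tangent weights, and forms the quotient $\prod_w(1-w)/\prod_{w'}(1-w')$. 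A convenient reorganization is to compare \propref{character to correspondence}b) with \propref{character to correspondence}a): the tangent character of the correspondence $\sE_{\ul{d},i}$ at $(\widetilde{\ul{d}},\widetilde{\ul{d}}+\delta_{i,j})$ differs from that of $\CP_{\ul{d}}$ at $\widetilde{\ul{d}}$ by exactly the one-dimensional virtual term spelled out in part a), so that $C_{\widetilde{\ul{d}}}/C_{\widetilde{\ul{d}}+\delta_{i,j}}$ is, up to a monomial in $t,v,u$, the reciprocal of the product over that virtual term --- which after cancellation is precisely the finite product of linear factors in the $p_{\bullet,\bullet}$ needed to turn the coefficient of \propref{analog}a) into the claimed one. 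The case $\widetilde{\ul{d}}-\delta_{i,j}$ follows by the substitution $\widetilde{\ul{d}}\rightsquigarrow\widetilde{\ul{d}}-\delta_{i,j}$ (equivalently, by inverting). Finally the specialization~(\ref{rybn}) $u=v^{-K-n}$, $t_j=v^{\widetilde{\mu}_j-j+1}$ is imposed; it only makes each $p_{i,j}$ a power of $v$ and does not affect the identities above. (Equivalently, since $\langle\widetilde{\ul{d}}\rangle=C_{\widetilde{\ul{d}}}^{-1}[\widetilde{\ul{d}}]$ and restriction to fixed points is an isomorphism, one may instead compute the coefficients directly by the localization formula for $\bp_*((\sL'_{i-n})^{\otimes r}\otimes\bq^*)$ and $\bq_*(\sL_{i-n}\otimes(\sL'_{i-n})^{\otimes r}\otimes\bp^*)$ on $\sE_{\ul{d},i}$, feeding in the characters of \propref{character to correspondence}a),b) and the fixed-point weight of $\sL_{i-n}$; this gives the same answer by the first paragraph.)

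The only substantial step is the telescoping in the second paragraph: verifying that the ratio of the (finite but many-term) tangent characters at $\widetilde{\ul{d}}$ and at $\widetilde{\ul{d}}\pm\delta_{i,j}$ collapses to the compact product in the statement. This is the $K$-theoretic counterpart of the corresponding computation in~\cite{ffnr}; the points that need care are the wrap-around $k-1\equiv n\pmod n$ (for $i\equiv1$), the boundary case $j=i$ (where $d_{i-1,i}$ lies outside the range $i\ge j$), and the $\delta_{i,n}$-twist built into $f_i$, which accounts for the factor $u^{-\delta_{i,n}}$ in the $f_{i,r}$-formula.
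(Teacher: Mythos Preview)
Your approach is correct, and in fact your parenthetical alternative---applying the localization formula directly on $\sE_{\ul{d},i}$ with the characters of \propref{character to correspondence}a),b)---is precisely what the paper does. The paper, however, sidesteps the telescoping computation of $C_{\widetilde{\ul{d}}}/C_{\widetilde{\ul{d}}\pm\delta_{i,j}}$ entirely by a clean observation you circle around but do not state: writing $\widetilde{\ul{d}}'=\widetilde{\ul{d}}+\delta_{i,j}$ and $\Pi_\sE:=\prod_{w\in T_{(\widetilde{\ul{d}},\widetilde{\ul{d}}')}\sE_{\ul{d},i}}(1-w)$, the Bott--Lefschetz formula gives
\[
e_{i,r[\widetilde{\ul{d}}',\widetilde{\ul{d}}]}=(\text{monomial})\cdot\frac{C_{\widetilde{\ul{d}}'}}{\Pi_\sE},\qquad
f_{i,r[\widetilde{\ul{d}},\widetilde{\ul{d}}']}=(\text{monomial})'\cdot\frac{C_{\widetilde{\ul{d}}}}{\Pi_\sE},
\]
so the renormalization $\langle\cdot\rangle=C_{\cdot}^{-1}[\cdot]$ simply \emph{swaps} the two:
\[
e_{i,r\langle\widetilde{\ul{d}}',\widetilde{\ul{d}}\rangle}=\tfrac{C_{\widetilde{\ul{d}}}}{C_{\widetilde{\ul{d}}'}}\,e_{i,r[\widetilde{\ul{d}}',\widetilde{\ul{d}}]}
=(\text{explicit monomial})\cdot f_{i,r[\widetilde{\ul{d}},\widetilde{\ul{d}}']},
\]
and symmetrically for $f$. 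The claimed formulas then follow immediately from \propref{analog}a), with no need to track how the many-term tangent character changes under $\widetilde{\ul{d}}\rightsquigarrow\widetilde{\ul{d}}\pm\delta_{i,j}$. Your direct route would also reach the answer, but this swap trick is both shorter and conceptually cleaner. One minor point: the specialization~(\ref{rybn}) plays no role in the proof of this proposition---it is stated and proved over $\on{Frac}(K^{\widetilde{T}\times\BC^*\times\BC^*}(pt))$, and the compatibility with the specialization is the separate content of Theorem~\ref{rybni}.
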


\begin{proof}

According to Proposition~\ref{analog}, matrix coefficients
${e_{i,r}}_{[\widetilde{\ul{d}}',\widetilde{\ul{d}}]}\ (
{f_{i,r}}_{[\widetilde{\ul{d}}',\widetilde{\ul{d}}]})$ are nonzero
only if $\widetilde{\ul{d}}'=\widetilde{\ul{d}}+\delta_{i,j}\
(\widetilde{\ul{d}}'=\widetilde{\ul{d}}-\delta_{i,j})$ for some
$j\leq i$. In those cases they are given by the Bott-Lefschetz fixed
point formula:
$$ e_{i,r
[\widetilde{\ul{d}}',\widetilde{\ul{d}}]}=
t_{i+1}^{-1}v^{d_{i+1}-d_i-i}(t_j^2v^{-2d_{ij}}u^{2\lceil\frac{j-n}{n}\rceil}v^i)^r
\frac {\dsp \prod_{w\in T_{\widetilde{\ul{d}}'}\CP_{\ul{d}'}}
(1-w)}{\dsp \prod_{w\in T_{(\widetilde{\ul{d}},\widetilde{\ul{d}'}
)}\sE_{\ul{d},i}} (1-w)};
$$
$$ f_{i,r
[\widetilde{\ul{d}}',\widetilde{\ul{d}}]}=
-t_{i}^{-1}u^{-\delta_{i,n}}v^{d_{i}-d_{i-1}-1+i}(t_j^2v^{-2d_{ij}+2}u^{2\lceil\frac{j-n}{n}\rceil})(t_j^2v^{-2d_{ij}+2}u^{2\lceil\frac{j-n}{n}\rceil}v^i)^r
\frac {\dsp \prod_{w\in T_{\widetilde{\ul{d}}'}\CP_{\ul{d}'}}
(1-w)}{\dsp \prod_{w\in T_{(\widetilde{\ul{d}},\widetilde{\ul{d}'}
)}\sE_{\ul{d},i}} (1-w)}.
$$

 So after renormalizing vectors according to (\ref{renorm}) we have:
$$e_{{i,r}\langle\widetilde{\ul{d}'}{},\widetilde{\ul{d}}\rangle}=
  -f_{{i,r}[\widetilde{\ul{d}}{},\widetilde{\ul{d}'}]}t_it_{i+1}^{-1}u^{\delta_{i,n}}v^{d_{i+1}-2d_i+d_{i-1}-2i}(t_j^2v^{-2d_{ij}}u^{2\lceil\frac{j-n}{n}\rceil})^{-1},$$
$$f_{{i,r}\langle\widetilde{\ul{d}}{},\widetilde{\ul{d}'}\rangle}=
  -e_{{i,r}[\widetilde{\ul{d}'}{},\widetilde{\ul{d}}]}t_i^{-1}t_{i+1}u^{-\delta_{i,n}}v^{-d_{i+1}+2d_i-d_{i-1}+2i}(t_j^2v^{-2d_{ij}}u^{2\lceil\frac{j-n}{n}\rceil}).$$
 Now, the statement follows from Proposition~\ref{analog}.
\end{proof}

We define $V(\mu)$ as the $\BC(v)$--linear span of the vectors
$\langle\widetilde{\ul{d}}\rangle$ for $\widetilde{\ul{d}}\in
D(\mu)$.

\begin{thm}
\label{rybni} Formulas of Theorem~\ref{varaf} give rise to the
action of \"{U}$_v(\widehat{\fsl}_n)/(u-v^{-K-n})$ in $V(\mu)$.
\end{thm}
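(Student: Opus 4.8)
The plan is to reduce the statement to the combinatorics of the set $D(\mu)$ together with the matrix-coefficient formulas of Propositions~\ref{analog} and~\ref{GTaff}, specialized as in~(\ref{rybn}). Recall that the relations of \"{U}$_v(\widehat{\fsl}_n)$ were proved, in the proofs of Theorems~\ref{var} and~\ref{varaf}, as identities of rational functions in the weights $p_{i,j}$ and in $v$, checked coefficient by coefficient in the fixed point basis. Hence it suffices to establish, after the specialization~(\ref{rybn}), that (i) for $\widetilde{\ul{d}}\in D(\mu)$ all the relevant data --- the renormalized vector $\langle\widetilde{\ul{d}}\rangle$ (i.e. the constant $C_{\widetilde{\ul{d}}}$ of~(\ref{renorm})), the matrix coefficients of $e_{i,r},f_{i,r}$ out of $\langle\widetilde{\ul{d}}\rangle$, and the eigenvalue of $\psi_i^\pm(z)$ on $\langle\widetilde{\ul{d}}\rangle$ --- remain well defined (nothing in a denominator, and no factor of $C_{\widetilde{\ul{d}}}$, degenerates to $0$), and (ii) $V(\mu)$ is preserved by all the operators $e_{i,r},f_{i,r},\psi_i^\pm(z)$. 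Granting (i) and (ii): any iterated application of generators to a vector $\langle\widetilde{\ul{d}}\rangle$ with $\widetilde{\ul{d}}\in D(\mu)$ stays within $V(\mu)$ and passes only through vectors indexed by $D(\mu)$, so every matrix coefficient entering the verification of a defining relation on such a vector is finite, and that relation continues to hold, being the specialization of an identity of rational functions that holds identically. Finally, since~(\ref{rybn}) literally sets $u=v^{-K-n}$, the resulting action is $\BC(v)$-linear and factors through \"{U}$_v(\widehat{\fsl}_n)/(u-v^{-K-n})$.

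The first ingredient is the elementary computation (implicit in~(\ref{rybn})) that after the specialization every weight becomes a power of $v$,
$$p_{i,j}=v^{2(\widetilde{\mu}_j-j+1-d_{ij})},$$
compatibly with the periodicity $p_{i+n,j+n}=u^2p_{i,j}$ (one uses $\widetilde{\mu}_{j+n}=\widetilde{\mu}_j-K$ and $u=v^{-K-n}$). Consequently any factor of the form $1-p_{i,j}p_{i',k}^{-1}$ appearing in Propositions~\ref{analog},~\ref{GTaff} vanishes exactly when $d_{ij}+j-\widetilde{\mu}_j=d_{i'k}+k-\widetilde{\mu}_k$, i.e. precisely when one of the inequalities bundled into~(\ref{fei}) or~(\ref{ryb}) becomes an equality. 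For (i) I would prove the combinatorial non-degeneracy lemma that for $\widetilde{\ul{d}}\in D(\mu)$ the integers $d_{ij}+j-\widetilde{\mu}_j$, $j\leq i$, are pairwise distinct --- the affine avatar of the classical fact that the shifted contents of a row of a Gelfand--Tsetlin pattern are distinct; since these integers tend to $-\infty$ as $j\to-\infty$ (because $d_{ij}=0$ for $i-j\gg0$ while $\widetilde{\mu}_j-j\to+\infty$), only finitely many pairs must be compared. This distinctness, together with the analogous non-vanishing of the tangent weights of Proposition~\ref{character to correspondence}b) (which gives $C_{\widetilde{\ul{d}}}\neq0$), is exactly what prevents the denominators $\prod_{j\neq k\leq i}(1-p_{i,k}p_{i,j}^{-1})$ occurring in Propositions~\ref{analog},~\ref{GTaff}, and the denominators in the eigenvalue of $\psi_i^\pm(z)$, from vanishing on $D(\mu)$.

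For (ii), fix $\widetilde{\ul{d}}\in D(\mu)$. Since $\psi_i^\pm(z)$ is diagonal in the fixed point basis, only $e_{i,r}$ and $f_{i,r}$ need checking; by Propositions~\ref{analog},~\ref{GTaff} the vector $f_{i,r}\langle\widetilde{\ul{d}}\rangle$ (and similarly $e_{i,r}\langle\widetilde{\ul{d}}\rangle$) is a $\BC(v)$-combination of vectors $\langle\widetilde{\ul{d}}'\rangle$ with $\widetilde{\ul{d}}'$ obtained from $\widetilde{\ul{d}}$ by adding or subtracting some $\delta_{i,j}$, $j\leq i$, so it suffices to show the corresponding coefficient vanishes whenever $\widetilde{\ul{d}}'\notin D(\mu)$. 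If $\widetilde{\ul{d}}'\notin D$ --- it violates~(\ref{fei}) --- the coefficient is zero since there is no such fixed point, and moreover the offending equality (such as $d_{ij}=d_{i\mp1,j}$, or one of the $\widetilde\supset$-equalities among the $\lambda^{kl}$) already forces a numerator factor $1-p_{i\mp1,j}p_{i,j}^{-1}$ of Proposition~\ref{GTaff} to vanish, so the formula is consistent. If instead $\widetilde{\ul{d}}'\in D\setminus D(\mu)$, then some inequality of~(\ref{ryb}) which was an equality for $\widetilde{\ul{d}}$ fails for $\widetilde{\ul{d}}'$; reading that equality through the formula for $p_{i,j}$ exhibits it as the vanishing of one of the numerator factors $\prod_{k\leq i\pm1}(1-p_{i\pm1,k}p_{i,j}^{-1})$ of the relevant matrix coefficient in Proposition~\ref{GTaff}, so the coefficient is again zero. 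Hence $e_{i,r}$ and $f_{i,r}$ map $V(\mu)$ into $V(\mu)$.

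The main obstacle is exactly this bookkeeping in (ii): one must run through \emph{all} the inequalities packaged in~(\ref{fei}) and~(\ref{ryb}) --- in particular those coming from the $\widetilde\supset$-conditions~(\ref{pool}) on the Young diagrams $\lambda^{kl}$, which are absent in the finite case of~\cite{fr} and are what makes this statement genuinely affine --- and match each way in which $\widetilde{\ul{d}}\pm\delta_{i,j}$ can leave $D(\mu)$ with the correct vanishing factor among the finitely many occurring in the numerators of Proposition~\ref{GTaff}. The clean shape of those numerators, on which this matching rests, is produced by the renormalization~(\ref{renorm}) applied to the Bott--Lefschetz fixed point formula together with the tangent-weight character of Proposition~\ref{character to correspondence}, and one has to keep track of the cancellations there to be sure that no spurious factor is gained or lost. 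Beyond this matching and the non-degeneracy lemma needed for (i), no computation is required that was not already carried out in the proofs of Theorems~\ref{var} and~\ref{varaf}.
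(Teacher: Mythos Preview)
Your plan coincides with the paper's: reduce to (i) non-vanishing of the denominators of the renormalized matrix coefficients for $\widetilde{\ul{d}}\in D(\mu)$, and (ii) vanishing of the numerators whenever $\widetilde{\ul{d}}'\notin D(\mu)$, after computing $p_{i,j}=v^{2(\widetilde{\mu}_j-j+1-d_{ij})}$ and translating everything into combinatorics of the integers $d_{ij}+j-\widetilde{\mu}_j$. The paper carries out exactly this verification (sketched for $e_{i,r}$); in (ii) it shows that $\widetilde{\ul{d}}-\delta_{i,j}\in D\setminus D(\mu)$ forces $d_{i-1,j-1}-\widetilde{\mu}_{j-1}=d_{i,j}-\widetilde{\mu}_j$, so the numerator factor with $k=j-1$ vanishes.

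One small correction to your (i): the pairwise distinctness of $d_{ij}+j-\widetilde{\mu}_j$ for $\widetilde{\ul{d}}\in D(\mu)$ is true (indeed these are strictly increasing in $j$), but it is not quite the condition you need. The denominators in Proposition~\ref{GTaff} carry $p$'s evaluated at the \emph{target} $\widetilde{\ul{d}}\mp\delta_{i,j}$, so after specialization the single factor $p_{i,j}$ is shifted by $v^{\pm 2}$ and the actual non-vanishing condition reads $d_{ij}+j-\widetilde{\mu}_j\neq d_{ik}+k-\widetilde{\mu}_k\pm1$. Your monotonicity disposes of one sign, but the borderline case (difference exactly $1$) must be ruled out using the extra hypothesis that the target lies in $D$ --- for $e_{i,r}$ this is where $d_{i+1,j}>d_{i,j}-1$ enters, exactly as in the paper's case $k<j$. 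Your added worry that $C_{\widetilde{\ul{d}}}\neq 0$ is a legitimate point the paper leaves implicit, and it is not covered by the row-distinctness lemma; it needs its own check from Proposition~\ref{character to correspondence}b).
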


\begin{proof}

 Analogously to Theorem 3.23,~\cite{ffnr}, we have to check two things:

\noindent
(i) for $\widetilde{\ul{d}}\in D(\mu)$ the denominators of
the matrix coefficients
$e_{i,r\langle\widetilde{\ul{d}},\widetilde{\ul{d}}{}'\rangle},\
f_{i,r\langle\widetilde{\ul{d}},\widetilde{\ul{d}}{}'\rangle}$ do
not vanish;

\noindent
(ii) for $\widetilde{\ul{d}}\in D(\mu),\
\widetilde{\ul{d}}{}'\not\in D(\mu)$ the numerators of the matrix
coefficients
$e_{i,r\langle\widetilde{\ul{d}},\widetilde{\ul{d}}{}'\rangle},\
f_{i,r\langle\widetilde{\ul{d}},\widetilde{\ul{d}}{}'\rangle}$ do
vanish.

  Both verifications are straightforward and we will sketch only those for $e_{i,r}$
  operators.\footnote{\ We choose to provide some details of the verification, since they were missing in~\cite{ffnr}.}

\noindent
 Under the above specialization, for $j=nj_0+j_1\ (j_0\in \ZZ, 1\leq j_1\leq n)$, we get
  $$p_{i,j}=v^{2\wt{\mu}_{j_1}-2j_1+2-2d_{i,j}-2j_0(K+n)}=v^{2(\wt{\mu}_j-j-d_{i,j}+1)}.$$

\medskip
\noindent
 (i) We need to show $\wt{\mu}_j-j-d_{i,j}\ne \wt{\mu}_k-k-d_{i,k}-1,\ \forall k\leq i,$ for
 $\underline{\wt{d}}\in D(\mu)$, such that $\underline{\wt{d}}-\delta_i^j\in D$.

\noindent
 $\circ$
  If $j\leq k\leq i$, then $d_{i,j}-\wt{\mu}_j\leq d_{i+k-j,k}-\wt{\mu}_k\leq d_{i,k}-\wt{\mu}_k$ and $j<k+1$, implying the result.

\noindent
 $\circ$
  If $k<j\leq i$, then $d_{i,k}-\wt{\mu}_k\leq d_{i+j-k,j}-\wt{\mu}_j\leq d_{i,j}-\wt{\mu}_j$ and $k+1\leq j$.
  This implies $d_{i,k}-\wt{\mu}_k+k+1\leq  d_{i,j}-\wt{\mu}_j+j$. However, if the equality happens above,
  then we have $j=k+1$ and $d_{i+j-k,j}=d_{i,j}$, that is $d_{i+1,j}=d_{i,j}$.
  But this contradicts our assumption $\underline{\wt{d}}-\delta_i^j\in  D$.

\medskip
\noindent
   (ii) We need to prove an existence of $k\leq i-1$ satisfying $\wt{\mu}_j-j-d_{i,j}=\wt{\mu}_k-k-d_{i-1,k}-1$
  for $\underline{\wt{d}}\in D(\mu)$, such that $\underline{\wt{d}}-\delta_i^j\in D\backslash D(\mu)$.

   Recalling the definition of $D(\mu)$, the latter condition on $\underline{\wt{d}}$ guarantees
  $d_{i-l,j-l}-\wt{\mu}_{j-l}=d_{i,j}-\wt{\mu}_j$ for some $l\geq  1$ and so $d_{i-1,j-1}-\wt{\mu}_{j-1}=d_{i,j}-\wt{\mu}_j$.
  Thus, picking  $k:=j-1$ works.
\end{proof}

Restricting $V(\mu)$ to the subalgebra of
\"{U}$_v(\widehat{\fsl}_n)$, generated by $\{e_{i,0}, f_{i,0},
v^{\pm h_i}\}_{1\leq i\leq n}$ which is isomorphic to
$U_v(\widehat{\fsl}_n)$ (called \textsl{horizontal} in~\cite{vv2})
we obtain the same named $U_v(\widehat{\fsl}_n)$-module with the
Gelfand-Tsetlin basis parameterized by $D(\mu)$. Recall that in the
proof of Theorem 3.22,~\cite{ffnr}, there was constructed a
bijection between $D(\mu)$ and Tingley's crystal $\fB_\mu$ of
cylindric plane partitions model of section 4~\cite{t}. This answers
Tingley's \textsl{Question 1} (\cite{t}, p.38).

Finally we formulate a conjecture:

\begin{conj}
  \"{U}$_v(\widehat{\fsl}_n)/(u-v^{-K-n})$--module $V(\mu)$ is
isomorphic to Uglov-Takemura module, constructed in~\cite{u1}.
\end{conj}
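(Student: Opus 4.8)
We sketch an approach to this conjecture. After the specialization $u=v^{-K-n}$, both $V(\mu)$ and the Uglov--Takemura module $\widetilde{L}(\mu)$ of~\cite{u1} are irreducible highest-weight modules over \"{U}$_v(\widehat{\fsl}_n)$ in the Drinfeld sense: for $V(\mu)$, irreducibility is immediate from Theorem~\ref{rybni} together with the ensuing identification of its restriction to the horizontal $U_v(\widehat{\fsl}_n)$ with the integrable irreducible module of highest weight $\mu$ --- so that a \"{U}$_v(\widehat{\fsl}_n)$-submodule would already be a submodule over the horizontal subalgebra --- while for $\widetilde{L}(\mu)$ this is part of its Fock-space construction. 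The plan is to show that the two modules have the same highest $\ell$-weight, i.e. the same eigenvalue of the commuting family $\psi_i^{\pm}(z)$ on the respective highest-weight vector, and then to conclude by uniqueness of the irreducible module with prescribed highest $\ell$-weight.

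Concretely, the first step is to locate the highest $\ell$-weight of $V(\mu)$. The set $D(\mu)$ of~(\ref{ryb}) has a distinguished extremal element $\widetilde{\ul{d}}^{\circ}$ --- the source of Tingley's crystal $\fB_\mu$, to which $D(\mu)$ is identified via the bijection $D(\mu)\iso\fB_\mu$ constructed in~\cite{ffnr} (which answers Tingley's Question~1 of~\cite{t}) --- and $\langle\widetilde{\ul{d}}^{\circ}\rangle$ is a common eigenvector of the $\psi_i^{\pm}(z)$, its eigenvalue being read off from Proposition~\ref{analog}(b) upon substituting $u=v^{-K-n}$ and $t_j=v^{\widetilde{\mu}_j-j+1}$ (whereupon $p_{i,j}=v^{2(\widetilde{\mu}_j-j-d^{\circ}_{ij}+1)}$). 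The second step is to compute the highest $\ell$-weight of $\widetilde{L}(\mu)$ from~\cite{u1} --- the eigenvalue of the Uglov--Takemura series $\psi_i^{\pm}(z)$ on the vacuum wedge --- and to match it, node by node, with the expression just obtained; this is where the bulk of the work lies. The matching accomplished, irreducibility together with uniqueness yields the sought isomorphism of modules over \"{U}$_v(\widehat{\fsl}_n)/(u-v^{-K-n})$.

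Should the requisite uniqueness statement for the quantum toroidal algebra be inconvenient to invoke directly (identifying the appropriate category may itself call for an argument), one can instead build the isomorphism $\Xi\colon V(\mu)\to\widetilde{L}(\mu)$ by hand: fix the bijection $D(\mu)\iso\fB_\mu$ together with the known identification of the Uglov--Takemura wedge basis with the same cylindric-plane-partition model, set $\Xi(\langle\widetilde{\ul{d}}\rangle)=c_{\widetilde{\ul{d}}}|\widetilde{\ul{d}}\rangle$, and solve for the scalars $c_{\widetilde{\ul{d}}}\in\BC(v)$ recursively from $\langle\widetilde{\ul{d}}^{\circ}\rangle$ by imposing equivariance under the series $x_i^{-}(z)$: each edge $\widetilde{\ul{d}}\to\widetilde{\ul{d}}+\delta_{i,j}$ of $D(\mu)$ forces the ratio $c_{\widetilde{\ul{d}}+\delta_{i,j}}/c_{\widetilde{\ul{d}}}$ to equal the ratio of the two $f_{i,r}$-matrix coefficients --- rational in $v$ and independent of $r$ --- comparing Proposition~\ref{GTaff} on our side with the corresponding formula of~\cite{u1}. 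Independence of this ratio of the chosen path in $D(\mu)$, and equivariance under the remaining series $x_i^{+}(z)$ and $\psi_i^{\pm}(z)$, then follow from irreducibility of $V(\mu)$ once the bijection has been chosen so as to respect $\psi_i^{\pm}(z)$-eigenvalues.

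The main obstacle, on either route, is the dictionary between the two explicit descriptions. The Uglov--Takemura formulas for the toroidal action are products of hook-type $v$-factors over the cells of $n$-tuples of Young diagrams, carrying delicate sign and normalization conventions, whereas Proposition~\ref{GTaff} and Proposition~\ref{analog} package the same data through the single family $p_{i,j}=v^{2(\widetilde{\mu}_j-j-d_{ij}+1)}$; translating between them requires attention to two further points. First, \secref{guay} identifies \"{U}$_v(\widehat{\fsl}_n)$ with the Varagnolo--Vasserot algebra \"{U}$'$ of~\cite{vv2} only after the twist $\Phi$ with $\Phi(u)=d^{n/2}v^{-n/2}$, so the module literally matching $V(\mu)$ is the pullback $\Phi^{*}$ of the Uglov--Takemura module, and the specialization $u=v^{-K-n}$ must be carried through $\Phi$ consistently. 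Second, \cite{u1} works with $\widehat{\fgl}_n$ rather than $\widehat{\fsl}_n$, so one should first restrict their module from the quantum toroidal algebra of $\widehat{\fgl}_n$ down to \"{U}$_v(\widehat{\fsl}_n)$, discarding the extra Heisenberg (determinant) factor. Once these identifications are in place, the residual verification is a lengthy but essentially routine manipulation of rational functions in $v$.
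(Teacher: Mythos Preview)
The statement you are addressing is labeled a \emph{Conjecture} in the paper and is not proved there; the paper simply states it, remarks that these modules are likely obtained via the Schur functor of~\cite{gkv} from irreducible $\mathfrak{X}$-semisimple modules over the double affine Hecke algebra, and stops. There is therefore no proof in the paper to compare your proposal against.

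Viewed on its own merits, your outline is a reasonable strategy, but it is not a proof and contains genuine gaps. The most serious is your claim that irreducibility of $V(\mu)$ over \"{U}$_v(\widehat{\fsl}_n)$ ``is immediate'' from Theorem~\ref{rybni} together with the identification of the restriction to the horizontal $U_v(\widehat{\fsl}_n)$ with the integrable irreducible module of highest weight $\mu$. The paper does not establish that identification: it only records a bijection between the index set $D(\mu)$ and Tingley's crystal $\fB_\mu$, which is a statement about combinatorial labels, not about module structure. Deducing that the horizontal restriction is the irreducible integrable module---rather than, say, a direct sum or some other module with the same underlying set of basis labels---requires additional argument (character comparison, or a direct check that the Gelfand--Tsetlin formulas specialize to those of the integrable module). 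Without this, your irreducibility argument collapses. A parallel issue arises on the Uglov--Takemura side: you assert irreducibility of $\widetilde{L}(\mu)$ as ``part of its Fock-space construction,'' but this too needs to be located precisely in~\cite{u1}.

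Your alternative route---building $\Xi$ explicitly by transporting along edges of $D(\mu)$---is closer to something one could actually carry out, but you correctly identify that the entire content lies in matching the matrix coefficients of Proposition~\ref{GTaff} with the hook-type formulas of~\cite{u1}, after passing through the twist $\Phi$ and the $\widehat{\fgl}_n$-to-$\widehat{\fsl}_n$ restriction. You do not perform this matching, and neither does the paper; that computation \emph{is} the conjecture.
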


It seems likely that these \"{U}$_v(\widehat{\fsl}_n)$--modules are
obtained by the application of the {\em Schur} functor (\cite{gkv})
to the irreducible $\mathfrak X$-semisimple modules over the double
affine Cherednik algebra \"{H}$_n(v)$ of type $A_{n-1}$,
see~\cite{sv}.

\medskip

\end{document}